\documentclass[10pt,a4paper]{article}

\usepackage[utf8]{inputenc}
\usepackage[T1]{fontenc}
\pdfoutput=1

\usepackage{amsmath}
\usepackage{amsfonts}
\usepackage{amssymb}
\usepackage{mathtools}

\usepackage{lmodern}
\usepackage{mathrsfs}
\usepackage{bbm}
\usepackage{yfonts}

\usepackage[parfill]{parskip}
\usepackage{array}

\usepackage{graphicx}
\usepackage[font=small,labelfont=bf]{caption}
\usepackage{tikz-cd}

\usepackage{titlesec}
\usepackage{enumerate} 

\usepackage{verbatim}
\usepackage{amsthm}

\usepackage[colorlinks=true, allcolors=blue]{hyperref}
\usepackage[nameinlink]{cleveref}

\newtheorem{theorem}{Theorem}

\newtheorem{corollary}{Corollary}[section]
\newtheorem{lemma}{Lemma}
\newtheorem{prop}{Proposition}

\theoremstyle{remark}
\newtheorem{rem}{Remark}[section]

\theoremstyle{definition}
\newtheorem{definition}{Definition}

 \newcommand{\fl}{{\overline{\mathbb{F}}_\ell}}
 \newcommand{\ql}{{\overline{\mathbb{Q}}_\ell}}
  \newcommand{\zl}{{\overline{\mathbb{Z}}_\ell}}
  \newcommand{\rep}{\mathrm{Rep}}

   \newcommand{\repf}{\mathrm{Rep}_{\fl}}
   \newcommand{\ip}{\mathrm{Ind}_P^G}
   
   \newcommand{\df}{\dim_\fl}
   \newcommand{\ho}{\mathrm{Hom}}
   \newcommand{\bl}{\otimes_\zl\fl}
   
   \newcommand{\im}{\mathrm{Im}}
   \newcommand{\op}{\overline{P}}
   \newcommand{\id}{\mathrm{Ind}}
   \newcommand{\xc}{\mathfrak{X}}
   \newcommand{\repr}{\mathrm{Rep}_R}
   \newcommand{\ji}{J_{P,P'}}
   \newcommand{\un}{_{un}}
   \newcommand{\res}{\mathrm{res}}
   \newcommand{\fo}{\mathfrak{o}}
   \newcommand{\ra}{\rightarrow}
   \newcommand{\ipp}{\id_{P'}}
   \newcommand{\pad}{\mathfrak{d}}
   \newcommand{\ain}[3]{#1\in\{#2,\ldots,#3\}}
   \newcommand{\oo}{\mathbb{O}}
   \newcommand{\tpi}{\Tilde{\pi}}
   \newcommand{\NN}{\mathbb{N}}

   \newcommand{\ed}{\mathrm{End}}
   \newcommand{\trho}{{\Tilde{\rho}}}
   \newcommand{\tsigma}{\Tilde{\sigma}}
   \newcommand{\irr}{\mathrm{Irr}}
   \newcommand{\FF}{\mathrm{F}}
   \newcommand{\mG}{\mathrm{G}   }
   \newcommand{\io}{\mathrm{O}}
   \newcommand{\mM}{\mathrm{M}}
   \newcommand{\mP}{\mathrm{P}}
   \newcommand{\rei}{\rep_{\ql}^{int}}
   \newcommand{\rl}{r_\ell}
   \newcommand{\mvw}{^\mathrm{MVW}}
   \newcommand{\dr}{\mathcal{D}_\rho}
   \newcommand{\rk}{\mathrm{rk}}
   
   \newcommand{\Z}{\mathrm{L}}
   \newcommand{\hra}{\hookrightarrow}
   \newcommand{\cusp}{\mathrm{cusp}}
   \newcommand{\ZZ}{\mathbb{Z}}
   \newcommand{\CC}{\mathbb{C}}
   \newcommand{\tchi}{\Tilde{\chi}}
   \newcommand{\scrO}{\mathscr{O}}
   \newcommand{\cK}{\mathcal{K}}
   \newcommand{\WW}{\mathbb{W}}
   \newcommand{\sra}{\twoheadrightarrow}
   \newcommand{\Sp}{\mathrm{Sp}}
   \newcommand{\Oo}{\mathrm{O}}
   \newcommand{\EE}{\mathrm{E}}
   \newcommand{\fc}{\mathfrak{c}}
\date{\today}
\begin{document}
\title{Lifting banal representations of classical groups}
\author{Johannes Droschl}
\maketitle
\begin{abstract}
Let $\mG$ be a symplectic or a split orthogonal group over a local non-archimedean field $\FF$. A prime $\ell$ is called banal with respect to $\mG$ if it does not divide the cardinality of the $k$-points of $\mG$, where $k$ is the residue field of $\FF$. In this paper we show that for every banal prime $\ell$, any smooth irreducible $\fl$-representation of $\mG(\FF)$ admits a lift to $\ql$. We also state similar results for more general classical groups of symplectic, orthogonal or unitary type.
As an application we prove Howe-duality in the strongly banal case for symplectic-orthogonal or unitary dual pairs.
\end{abstract}
2010 \textit{Mathematics subject classification}: 11F27, 11S23, 20C20, 22E50
\section{Introduction}
Let $\FF$ be a local non-archimedean field of characteristic different from $2$ with ring of integers $\io$ and residue field $k$ of cardinality $q$. Let $\ell$ be a prime not dividing $q$ and $\mG$ a reductive group over $\FF$ which we assume in this introduction to be split. The study of representations of $\mG(\FF)$ over various fields of coefficients lies at the heart of the automorphic side of the Local Langlands program. Depending on the chosen field and the group $\mG$, several classification results have been achieved. For example, if $\mG$ is a general linear group, the work of Bernstein and Zelevinsky \cite{BerZel77} settles the case of representations with complex coefficients, and the work of Vignéras, see for example \cite{Vig96}, and Mínguez and Sécherre, \emph{cf}. \cite{MinSec14}, the case of $\fl$-coefficients. In the case of classical groups, the theory of Arthur \cite{arthur2013endoscopic} provides us with a good understanding of the tempered representations with complex coefficients. However, unlike in the case of the general linear group, the behavior of representations of classical groups with coefficients in fields of non-zero characteristic remains largely a mystery.

Taking inspiration from finite groups, the $\fl$-representation theory should be reminiscent of the $\CC$-representation theory, as long as $\ell$ is large in comparison to the size of $\mG$. Such $\ell$ are called $\emph{banal}$. Over a finite group $W$, such a size condition is usually given by $\ell\nmid\lvert W\lvert$. In this case the $\fl$-representations of $W$ behave exactly as their complex counterparts. In particular, one can realize every irreducible representation in a family over $\zl$, \emph{i.e.} there exists a $\zl[W]$-stable lattice in any irreducible $\ql$-representation, and tensoring this lattice with $\fl$ yields an irreducible representation. Moreover, any irreducible representation over $\fl$ can be constructed in this way.

The main goal of this note is to establish a similar result for symplectic, orthogonal, or unitary groups over non-archimedean local fields. We will assume in this introduction that all groups are split, however we will treat in the paper also the non-split case. Note that for general linear groups and their inner forms this has been achieved in \cite{minguez2013representations}, however our methods are quite different and in fact can also be adapted to give a new proof in the general linear case. To be more precise, we call a smooth irreducible $\ql$-representation $\tpi$ of $\mG(\FF)$ a \emph{lift} of an irreducible $\fl$-representation $\pi$ of $\mG(\FF)$ if there exists a $\zl$-lattice $\fo$ inside $\tpi$, in the sense of \cite{Vig96}, such that $\fo\bl\cong\pi$.

We assume that $G$ is unramified over $\io$. In this case we can take \cite[Lemma 5.22]{Dat_Helm_Kurinczuk_Moss_2025} as a definition of a banal prime.
\begin{definition}
A prime $\ell$ is called banal with respect to $\mG$ if it does not divide $\lvert \mG(k)\rvert$.
\end{definition}
The main result of the paper is the following, see \Cref{T:lift} and \Cref{C:lifting}.
\begin{theorem}\label{T:main}
    Let $G$ be a symplectic or split orthogonal group and $\ell$ a banal prime with respect to $G$.
    Any smooth irreducible $\fl$-representation $\pi$ of $G$ admits a lift to $\ql$.
\end{theorem}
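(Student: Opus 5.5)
We prove the statement by induction on the rank of $G$, using parabolic induction and Jacquet modules. The cuspidal case serves as the base and is handled separately.

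\textbf{Step 1: cuspidals.} Let $\pi$ be a cuspidal irreducible $\fl$-representation of $G$. We first show that $\pi$ is supercuspidal. If a banal $\pi$ were a subquotient of a proper parabolic induction, the second adjointness and exactness of the Jacquet functor would force a nonzero Jacquet module. Since $\ell$ is banal, the pro-order of a maximal compact subgroup is invertible in $\fl$, so cuspidals are projective and injective in the category of representations with a fixed central-character-free Bernstein block. Hence $\pi$ is compactly induced from an open compact-mod-centre subgroup $K$ with an irreducible $\fl$-representation $\lambda$. Because $\ell\nmid |\mG(k)|$ and the relevant finite quotients of $K$ have order dividing a power of $p$ times $|\mG(k)|$-type factors, $\lambda$ lifts to a $\zl$-lattice $\tilde\lambda$. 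Then $\mathrm{c\text{-}Ind}_K^G\tilde\lambda\otimes\ql$ is irreducible cuspidal: its endomorphism ring, computed by Mackey, reduces to that of $\pi$. This gives the lift.

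\textbf{Step 2: reduction to induced representations.} For general $\pi$, choose a cuspidal support $P=MN$ with $\pi\hookrightarrow \ip(\sigma)$ and $\sigma$ cuspidal on $M=\mathrm{GL}_{n_1}\times\cdots\times G'$. By Step 1 and the known $\mathrm{GL}$ case, $\sigma$ lifts to $\tilde\sigma$. The key input is that in the banal case the Hecke algebra of the Bernstein block is defined over $\zl$ and is a generic Iwahori–Hecke-type algebra $\mathcal H_{\zl}$. Its reduction is semisimple in the same sense as over $\ql$, because the parameters $q^{a}$ avoid the relevant roots of unity when $\ell$ is banal. Concretely, the reducibility points of $\ip(\tilde\sigma\nu^s)$ (from $\mathrm{GL}$ segments and the Langlands–Shahidi/Mœglin reducibility points $\pm\alpha$ for classical groups) reduce injectively mod $\ell$. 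So the combinatorics of subquotients of $\ip(\sigma)$ over $\fl$ coincide with those over $\ql$.

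\textbf{Step 3: matching.} Using a Langlands-type classification (or Zelevinsky-type via Jacquet modules), irreducible $\ql$-subquotients $\tpi$ of $\ip(\tilde\sigma)$ are characterized by their Jacquet modules. Banality makes reduction of Jacquet modules compatible with reduction of the representations. We then argue that $r_\ell$ of each irreducible $\tpi$ is irreducible, by comparing lengths: the lengths of $\ip(\tilde\sigma)$ and $\ip(\sigma)$ agree, and each $\tpi$ reduces to something nonzero. Hence reduction gives a bijection of constituents, and $\pi$ is the reduction of the $\tpi$ with matching Jacquet module. Finally, a stable lattice exists because $\tpi$ is a subquotient of the integral representation $\ip(\tilde\sigma)$, and the reduction of any lattice is irreducible, hence $\cong\pi$.

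\textbf{Main obstacle.} The hardest step is the length equality in Step 3, i.e. that reducibility of $\ip(\sigma)$ over $\fl$ occurs exactly at the reductions of the $\ql$ reducibility points, with no extra reducibility. I would attack it through the Bernstein-block Hecke algebra: show that its $\zl$-form has parameters that are units whose reductions satisfy no new relations, using $\ell\nmid|\mG(k)|$. Then apply Tits deformation to transfer the module category.
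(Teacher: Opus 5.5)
Your Steps 2--3 rest entirely on the claim that $\ip(\tsigma)$ and $\ip(\sigma)$ have the same length. For the arbitrary lift $\tsigma$ you choose in Step 2 this is false, so the argument fails exactly where you locate the ``main obstacle''. The root cause is that reduction mod $\ell$ is far from injective on unramified twists: $\lvert-\rvert^{o(q)}$, and every unramified character with values $\equiv 1\bmod\ell$, becomes trivial. Here is a concrete failure.
\begin{itemize}
\item Take $G=\Sp_1\cong\mathrm{SL}_2$ and $\chi=\lvert-\rvert$ over $\fl$. Then $\chi\rtimes 1$ has length $2$ (trivial and Steinberg constituents, since $\ell\nmid q+1$).
\item But $\tchi=\lvert-\rvert^{1+o(q)}$ is an integral lift of $\chi$, and $\tchi\rtimes 1$ is irreducible over $\ql$.
\item So the lengths differ, the irreducible $\tchi\rtimes 1$ has reducible reduction, and neither constituent of $\chi\rtimes1$ is obtained from this $\tsigma$.
\end{itemize}
Saying that reducibility points ``reduce injectively'' does not help. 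The problem is that each $\fl$-reducibility point has infinitely many lifts, almost none of which are $\ql$-reducibility points.

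The missing idea is a careful, simultaneous choice of lifts for the whole cuspidal support, which the paper calls a \emph{banal lift} (\Cref{L:cuspsupext}):
\begin{itemize}
\item members of one cuspidal (half-)line are lifted as fixed twists of a single lift;
\item $\fc$-self-duality and relations $\rho^\lor\cong{}^\fc\rho'$ are preserved;
\item twists $\lvert\det\rvert^{k/2}$ of $\fc$-self-dual cuspidals are normalized modulo $o(\rho)$ into the window where M\oe glin's bound $d_{W,m}$ controls reducibility.
\end{itemize}
The existence of such a normalization is where $\ell\nmid\lvert\mG(k)\rvert$ (i.e.\ $o(\rho)m>2n$) really enters.

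Even with the right lift, the length equality is the whole content of the theorem, and your Hecke-algebra sketch does not supply it. The block algebra is an infinite-dimensional affine Hecke-type algebra. Its specializations at a central character are not semisimple even over $\ql$: already $\lvert-\rvert\rtimes 1$ for $\mathrm{SL}_2$ is a non-split extension. So Tits deformation does not apply, and you also do not construct a $\zl$-form of Heiermann's algebra compatible with reduction. Separately, constituents are not determined by Jacquet modules for classical groups: the two summands of a type II $\rho\rtimes\sigma$ have the same one.

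The paper avoids any global comparison. For $\tpi$ with banal cuspidal support it proves $\rl(\tpi)$ irreducible by induction:
\begin{itemize}
\item it realizes $\tpi$ as the image of an intertwining operator with $\Lambda=0$, via derivatives for non-$\fc$-self-dual $\rho$ and Langlands quotients in general;
\item it applies \Cref{L:ratJ}: the reduction of that image is the image of the reduced operator. The banal cuspidal support guarantees $J_{P,\op}(\tpi\otimes\nu_a)$ is an isomorphism for all $a\neq 1$ with $a\equiv1\bmod\ell$, which rules out spurious zeros of the determinant.
\end{itemize}
The base case, $\ip(\sigma)$ with $\fc$-self-dual cuspidal data, is \Cref{P:basecasetemperedl}. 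There the bound $\dim\ed(\ip(\sigma))\le 2^d$ over $\fl$ (from Roche's progenerator and Heiermann's basis of intertwining operators) is played against the $\ql$ result coming from Arthur's classification.

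Finally, in Step 1, projectivity of a cuspidal $\pi$ does not imply that $\pi$ is compactly induced from a compact open subgroup. That needs an exhaustion theorem (known for $p$ odd), whereas the paper simply quotes \Cref{T:cuslift}.
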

Moreover, we will also explain how a similar result can be achieved for general linear groups and more general classical groups of symplectic, orthogonal, or unitary type.
In \Cref{C:lifting} we also give a sufficient condition involving the cuspidal support when an integral representation over $\ql$ reduces to an irreducible representation over $\fl$.
The starting point of our explorations will be the following theorem.
\begin{theorem}[{\cite[Proposition 4.15]{Dat_Helm_Kurinczuk_Moss_2024}}]
    \Cref{T:main} is true for cuspidal representations.
\end{theorem}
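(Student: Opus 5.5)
The statement to be proven is that every smooth irreducible cuspidal $\fl$-representation $\pi$ of $\mG(\FF)$ has a lift to $\ql$ when $\ell$ is banal. The plan is to exploit the fact that, in the banal setting, cuspidal $\fl$-representations behave like projective objects relative to compact open subgroups. Concretely, I would first use that an irreducible cuspidal $\pi$ over $\fl$ is compactly induced from an irreducible representation $\lambda$ of an open subgroup $J$ that is compact modulo the centre. For classical groups with $p\neq 2$ this follows from the Stevens-type construction (valid over $\fl$ by Kurinczuk--Stevens). Alternatively, in the banal case, one can argue that $\pi$ is a quotient of $\mathrm{ind}_K^G$ of a type on a maximal compact $K$.

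The key steps, in order, are as follows.

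\textbf{Step 1: cuspidality is supercuspidality.} For banal $\ell$, the pro-order of a maximal compact subgroup is invertible away from the pro-$p$ part. More precisely, the banal condition $\ell\nmid|\mG(k)|$ ensures that the reductive quotients of all parahoric subgroups have order prime to $\ell$. So every smooth $\fl$-representation of any parahoric is semisimple, and cuspidal representations are supercuspidal.

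\textbf{Step 2: lift the type.} Write $\pi\cong \mathrm{ind}_J^G\lambda$, where $\lambda$ is an irreducible $\fl$-representation of $J$. Since $J$ is compact modulo the centre and the relevant finite quotient $J/J^1$ has order prime to $\ell$ (again by banality), $\lambda$ lifts to a $\ql$-representation $\tilde\lambda$ with a $J$-stable $\zl$-lattice $L$. The central character requires a separate treatment: lift it by the Teichmüller lift.

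\textbf{Step 3: compare the inductions.} Set $\tpi=\mathrm{ind}_J^G\tilde\lambda$. The lattice $\mathrm{ind}_J^G L$ is a $\zl$-lattice whose reduction is $\mathrm{ind}_J^G\lambda=\pi$, because compact induction commutes with $-\otimes_\zl\fl$. It remains to check that $\tpi$ is irreducible. This follows by the intertwining criterion: the intertwining of $\tilde\lambda$ is contained in that of $\lambda$ (in fact they coincide, since intertwining sets in the Stevens construction are determined by the underlying stratum data). Hence $\tpi$ is irreducible cuspidal.

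The main obstacle is Step 2 together with the irreducibility in Step 3: ensuring that the $\fl$-type $\lambda$ genuinely comes from a $\zl$-type with the same intertwining. For this I would use the explicit structure $\lambda=\kappa\otimes\tau$. Here the $\beta$-extension $\kappa$ lifts canonically since $J^1$ is pro-$p$. The representation $\tau$ is a cuspidal representation of a finite reductive group $\mathcal{G}$. Banality implies that $\ell\nmid|\mathcal{G}|$, so $\tau$ lifts to a cuspidal $\ql$-representation. The Weil-type intertwining computation then transfers verbatim.
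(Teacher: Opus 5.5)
\textbf{There is a genuine gap: your argument only covers odd residue characteristic.} The paper gives no argument for this statement. It is imported from \cite{Dat_Helm_Kurinczuk_Moss_2024} (restated as \Cref{T:cuslift}) and used under the paper's standing hypothesis, which only requires $\FF$ to have characteristic different from $2$; residue characteristic $p=2$ is therefore allowed.

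Your proof rests entirely on having a model $\pi\cong\mathrm{ind}_J^G\lambda$ with $J$ compact open. The sources you invoke for this (Stevens, Kurinczuk--Stevens) assume $p$ odd, so for $p=2$ this step is unjustified.

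The fallback you mention does not fill the gap. By Frobenius reciprocity every irreducible $\pi$ is a quotient of $\mathrm{ind}_K^G\sigma$ for any irreducible $\sigma\subseteq\pi|_K$, but Step 3 needs an \emph{isomorphism} with a compactly induced representation. With only a surjection $\mathrm{ind}_K^G\sigma\sra\pi$, the natural lattice in $\mathrm{ind}_K^G\tilde\sigma$ reduces to $\mathrm{ind}_K^G\sigma$, not to $\pi$. Since $\mathrm{ind}_K^G\tilde\sigma$ is highly reducible, nothing singles out an irreducible $\ql$-representation whose reduction is $\pi$. Banality does make $\pi$ projective, hence a direct summand of $\mathrm{ind}_K^G\sigma$. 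However, lifting that summand over $\zl$ is precisely the non-formal point, because idempotents of the infinite-rank Hecke algebra $\ed_G(\mathrm{ind}_K^G\sigma)$ do not lift for free.

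So as it stands you prove the statement only for odd $p$. One further point: if your source treats $\mathrm{SO}$ rather than the full orthogonal group $\Oo_n$ used here, you also need an index-two Clifford-theory step. This is harmless, since banality forces $\ell\neq 2$.

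For odd $p$ your route is a legitimate alternative to quoting \cite{Dat_Helm_Kurinczuk_Moss_2024}, but it is heavier than necessary, and the ``main obstacle'' you identify is not one. Once $\pi\cong\mathrm{ind}_J^G\lambda$ with $J$ compact, banality alone does the work:
\begin{itemize}
\item $\lambda$ factors through a finite quotient of $J$ of order prime to $\ell$, so it has a unique lift $\tilde\lambda$ with a $J$-stable lattice.
\item Each $J\cap gJg^{-1}$ acts on $\tilde\lambda$ and ${}^g\tilde\lambda$ through a finite group of order prime to $\ell$. On such a group reduction mod $\ell$ is a bijection on irreducibles, so $g$ intertwines $\tilde\lambda$ if and only if it intertwines $\lambda$.
\item Since $\pi$ is irreducible and admissible, $\ed_G(\pi)=\fl$, and Mackey's formula gives $I_G(\lambda)=J$.
\item Hence $I_G(\tilde\lambda)=J$, and $\mathrm{ind}_J^G\tilde\lambda$ is irreducible cuspidal with reduction $\pi$.
\end{itemize}
The factorisation $\lambda=\kappa\otimes\tau$, the $\beta$-extensions and the cuspidality of $\tau$ play no role. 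Also, since the centre of a classical group is finite, there is no central character to lift.
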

From here our work builds on the following four pillars, from which in turn the main result follows easily. The main idea is to construct recursively and explicitly all irreducible representations, both over $\fl$ and $\ql$, and check at each step of the iteration that the constructions behave as expected with reduction mod $\ell$. This is achieved using the following  tools.

\textbf{Intertwining operators}

The theory of intertwining operators as developed in \cite{Dat05} over an arbitrary algebraically closed field $R$ over $\ZZ[\frac{1}{q}]$ supplies us with very well behaved morphisms 
between parabolically induced representations. To be more precise, it allows us to define for a parabolic subgroup $\mP\subseteq\mG$ with Levi-factor $\mM$, a parabolic subgroup $\mP'$ with the same Levi-factor $\mM$, and an admissible $R$-representation $\pi$ of $\mM(\FF)$, an intertwining operator between the (normalized) parabolically induced representations
\[J_{\mP(\FF),\mP'(\FF)}(\pi)\colon \id_{\mP(\FF)}^{\mG(\FF)}(\pi)\ra\id_{\mP'(\FF)}^{\mG(\FF)}(\pi).\]
From now on we write for the $\FF$-points of an $\FF$-group $\mG$ just $G$ and in particular $G_n$ for the $
\FF$-points of the general linear group of rank $n$.

Our main result in this regard will be \Cref{L:ratJ}.
Let $\tpi$ be an admissible $\ql$-representation of $M$, $\fo$ be an integral structure of $\tpi$, and write $\pi\coloneqq\fo\bl$. Recall that $\ip(\fo)$ is an integral structure of $\ip(\tpi)$ and write $\op$ for the opposite parabolic subgroup of $P$.
Assume that the following are satisfied.
    \begin{enumerate}
        \item $\df\ho(\ip(\pi),\id_{\op}^G(\pi))=1$.
        \item $J_{P,\op}(\tpi)$ and $J_{P,\op}(\pi)$ are \emph{regular} in the sense of \cite[§7]{Dat05}.
        \item For a certain class of twists of $\tpi$ by $\zl$-valued characters $\chi$ the morphism
  $J_{P,\op}(\tpi\otimes\chi)$ is an isomorphism.  
    \end{enumerate}
\begin{lemma}[{\emph{cf}. \Cref{L:ratJ}}]
     Under the above assumption \[J_{P,\overline{P}}(\tpi)(\ip(\fo))\subseteq \id_{\op}^G(\fo)\] and \[J_{P,\overline{P}}(\tpi)(\ip(\fo))\bl=\im(J_{P,\op}(\pi)).\]
\end{lemma}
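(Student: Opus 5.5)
We argue through Dat's rational family of intertwining operators. Let $\Psi$ be the torus of unramified $\zl$-valued characters of $M$ (the ``certain class of twists'' in assumption 3). For $\chi\in\Psi$ write $\chi=\chi_{\mathrm{univ}}\circ\psi$, where $\chi_{\mathrm{univ}}\colon M\ra \zl[M/M^0]^\times$ is the universal unramified character and $\psi$ is a specialization. Following \cite[§7]{Dat05}, the operators $J_{P,\op}(\tpi\otimes\chi)$ are obtained by specializing a single morphism
\[\mathcal J\colon \ip(\fo\otimes\chi_{\mathrm{univ}})\ra \id_{\op}^G(\fo\otimes\chi_{\mathrm{univ}})\otimes_{A} K,\]
where $A=\zl[M/M^0]$ and $K$ is its fraction field. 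Regularity (assumption 2) means that $\mathcal J$ has no pole along the specializations at $\chi=1$, both in characteristic $0$ (at $\tpi$) and after reduction to $\fl$ (at $\pi$). Choose a uniformizer-free normalization $\mathcal J' = f\cdot \mathcal J$, with $f\in A$ chosen so that $\mathcal J'$ lands in the lattice $\id_{\op}^G(\fo\otimes\chi_{\mathrm{univ}})$ and is nonzero modulo the maximal ideal $\mathfrak m$ corresponding to $(\chi=1,\ \ell)$. This is possible because the source is finitely generated over $A[G]$, by admissibility and the Bernstein finiteness of the induced family. Localize at $\mathfrak m$; the local ring $A_{\mathfrak m}$ is regular.

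Next we show that $f$ is a unit at $\mathfrak m$, so that $\mathcal J$ itself is integral. Specializing $\mathcal J'$ along the $\ql$-point $\chi=1$ gives $f(1)J_{P,\op}(\tpi)$, while specializing along $\mathfrak m$ gives a nonzero map $\ip(\pi)\ra\id_{\op}^G(\pi)$. By assumption 1 this map equals $c\,J_{P,\op}(\pi)$ for some $c\in\fl^\times$. By regularity, $J_{P,\op}(\pi)$ is itself the reduction of the specialization of $\mathcal J$ along $\mathfrak m$. This forces the $\mathfrak m$-adic valuation of $f$ to vanish. Making this step rigorous is the key point, and I expect it to be the main obstacle. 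The plan is to compare $\mathcal J$ and $\mathcal J'$ along a generic curve through $\mathfrak m$ (a DVR $\zl[[t]]$-point, which reduces the problem to one variable). Assumption 3 says $\mathcal J$ is an isomorphism at generic $\chi$, so its ``determinant'' on finite-length pieces has no zero there. Hence the only possible failure of integrality is a common power of $\ell$, and assumption 1 together with regularity of $J_{P,\op}(\pi)$ rules this out.

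Granting this, the $\mathfrak m$-specialization of $\mathcal J$ at $\chi=1$ is exactly $J_{P,\op}(\tpi)$, and it maps $\ip(\fo)$ into $\id_{\op}^G(\fo)$, which gives the first claim. Its reduction mod $\ell$ is $c\,J_{P,\op}(\pi)$ with $c\neq0$, after adjusting $c=1$ by compatibility of Dat's normalizations with reduction. For the second claim, set $L=J_{P,\op}(\tpi)(\ip(\fo))$. The natural map $L\bl\ra\id_{\op}^G(\fo)\bl=\id_{\op}^G(\pi)$ has image $\im J_{P,\op}(\pi)$, so it remains to show that it is injective. Injectivity amounts to $L$ being saturated in $\id_{\op}^G(\fo)$, i.e. $L=\id_{\op}^G(\fo)\cap L\otimes\ql$. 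We prove this by another use of assumption 1. If saturation failed, then $\ell^{-1}x\in\id_{\op}^G(\fo)$ for some $x\in L\setminus\ell L$, and this would yield a second, linearly independent morphism $\ip(\pi)\ra\id_{\op}^G(\pi)$, namely the reduction of $\ell^{-1}J$ restricted to a suitable sublattice composed with a splitting. That contradicts the one-dimensionality. Alternatively, one can compare lengths: since $\ip(\tpi)$ and $\id_{\op}^G(\tpi)$ have the same semisimplified reduction, assumption 3 applied to twists shows that $\ker$ and $\im$ have reductions of complementary size. I would try the dimension-count argument first and fall back on the length comparison if it fails.
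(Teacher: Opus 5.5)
Your proposal has genuine gaps. In effect, the two halves use the right tools in the wrong places.

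\textbf{Second claim.} Assumption 3 is not a statement about generic twists; generic isomorphy is essentially automatic and carries no information. In \Cref{L:ratJ} it says that $J_{P,\op}(\tpi\otimes\nu_a)$ is an isomorphism, i.e.\ $\pad(\tpi\otimes\nu_a,P)=0$ by \Cref{L:pder}, for \emph{every} $a\in\zl^\times$ with $a\equiv 1\bmod\ell$ and $a\neq 1$. That is, it holds on the whole punctured residue disc around the trivial twist, and this is exactly what saturation of $L=J_{P,\op}(\tpi)(\ip(\fo))$ in $\id_{\op}^G(\fo)$ requires.

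Your main argument for saturation does not use this, and it fails on its own terms. An element $\ell^{-1}x\in\id_{\op}^G(\fo)$ with $x\in L\setminus\ell L$ gives no $G$-map defined on all of $\ip(\pi)$. ``$\ell^{-1}J$ on a suitable sublattice composed with a splitting'' is not a morphism $\ip(\pi)\ra\id_{\op}^G(\pi)$, so there is no contradiction with one-dimensionality. Whether $L\bl$ is bigger than $\im(J_{P,\op}(\pi))$ is simply not seen by $\ho(\ip(\pi),\id_{\op}^G(\pi))$.

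The paper's mechanism is the curve/determinant idea you used in your first half. Fix a small pro-$p$ subgroup $K'$ and $\fo$-valued bases of the $K'$-invariants of the compact models $\id_{P\cap K}^K$ and $\id_{\op\cap K}^K$; these do not depend on the unramified twist. The matrix of $J_{P,\op}(\tpi\otimes\nu_a)$ is then a Laurent polynomial in $a$ with $\zl$-coefficients. If its reduction drops rank at $a=1$, the determinant must vanish at some $\alpha\in\zl^\times$ with $\alpha\equiv1\bmod\ell$. Then $J_{P,\op}(\tpi\otimes\nu_\alpha)$ is not an isomorphism, and assumption 3 forces $\alpha=1$. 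Your ``length comparison'' fallback gives no substitute for this.

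\textbf{First claim.} The step you flag as the main obstacle is missing its real input. Integrality is about poles, not about zeros of a determinant. Even after poles near $\chi=1$ are excluded, a global power of $\ell$ remains possible, and one-dimensionality cannot exclude it. It only tells you that the reduction of the minimally rescaled operator $\ell^kJ_{P,\op}(\tpi)$ is a nonzero multiple of $J_{P,\op}(\pi)$. Your sentence ``by regularity, $J_{P,\op}(\pi)$ is the reduction of the specialization of $\mathcal J$'' presupposes the integrality you are trying to prove.

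The paper's anchor is the open cell. Let $\overline{U}$ be the unipotent radical of $\op$. For $f\in\ip(\fo)$ supported in $P\op$, the value of $J(f)$ at $1$ is the finite integral $\int_{\overline{U}}f(\bar u)\,d\bar u$. This is integral and reduces to the same expression for $\pi$. Since $\Lambda(\pi,P,\op)=0$, \Cref{L:noresidue} shows that $J_{P,\op}(\pi)$ does not vanish on such functions. So if $k>0$, the reduction of $\ell^kJ_{P,\op}(\tpi)$ would vanish on them, which is a contradiction. This part uses only assumptions 1 and 2; no twists are needed.
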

We apply the above lemma to recursively construct representations as the images of intertwining operators and it is here where the banality of $\ell$ plays for the first time a crucial role, as it allows us to verify the last condition.
In combination with the next ingredient, the above lemma allows us to reduce the question of lifting to a certain class of \emph{tempered} representations.

\textbf{Derivatives}

We recall the theory of derivatives for classical groups, as their counterpart for general linear groups is slightly less involved. Originally due to \cite{Jader} and \cite{Min08}, we follow the exposition in \cite{atobe2024local}.
Let $\rho$ be an irreducible cuspidal representation of a general linear group $G_m$ over either $\fl$ or $\ql$. An irreducible smooth representation $\pi$ of $G$ is called $\rho$-reduced, if either $m>\rk(G)$ or $r_{P}(\pi)$ does not contain a $(G_m,\rho)$-isotypic subquotient. Here $r_P$ denotes the Jacquet-functor with respect to a suitable parabolic subgroup.
We also write for a representation $\rho$ of $G_m$ and $k\in\NN$,  $
\rho^k$ for the parabolically induced representation of $\rho\otimes\ldots\otimes\rho$ to $G_{km}$. If $\ell$ is a banal prime for $G_m$ and $\rho$ is cuspidal, then $\rho^k$ is irreducible.
We also denote by $\rho^\lor$ the dual representation.
\begin{lemma}
    Let $\rho$ and $\pi$ be as above and assume that $\rho$ is non-self-dual. Then there exists $d_\rho\in\NN$ and $\dr(\pi)$, a $\rho$-reduced irreducible smooth representation, such that the following holds.
    \begin{enumerate}
        \item $J_{P,\op}((\rho^\lor)^{d_\rho}\otimes\dr(\pi))$ is regular.
        \item The image of the intertwining operator is isomorphic to $\pi$.
    \end{enumerate}
    \end{lemma}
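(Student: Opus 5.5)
The plan is to take $d_\rho$ maximal with $\pi\hookrightarrow \rho^{d_\rho}\rtimes\pi_0$, use Frobenius reciprocity and the Tadić/geometric lemma to make $\pi_0=\dr(\pi)$ irreducible and $\rho$-reduced, and then exhibit $\pi$ as the image of $J_{P,\op}$ via uniqueness of maps between the two induced representations.

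Take $d_\rho$ maximal such that the Jacquet module $r_P(\pi)$, for the standard parabolic $P$ with Levi $G_{md}\times G'$, has a subquotient of the form $\rho^{d}\otimes\sigma$. By Frobenius reciprocity and second adjointness, together with the irreducibility of $\rho^{d}$ (banality), we get a nonzero map
\[\pi\hookrightarrow \rho^{d_\rho}\rtimes\pi_0,\]
for some irreducible $\pi_0$ of $G'$.

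\textbf{Reducedness of $\pi_0$.} I would show that $\pi_0$ is $\rho$-reduced and uniquely determined, and set $\dr(\pi)\coloneqq\pi_0$. Suppose $r(\pi_0)$ had a $\rho$-isotypic piece. Exactness of Jacquet functors and the Tadić structure formula (the geometric lemma for classical groups) would then give a larger $\rho$-isotypic piece in $r_P(\pi)$. Here non-self-duality is essential: $\rho\not\cong\rho^\lor$, so the geometric lemma terms that mix $\rho$ and $\rho^\lor$ cannot cancel or recombine into $\rho$. This contradicts the maximality of $d_\rho$.

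The same computation shows that the $\rho^{d_\rho}$-isotypic part of $r_P(\rho^{d_\rho}\rtimes\pi_0)$ is exactly $\rho^{d_\rho}\otimes\pi_0$, with multiplicity one. Consequently $\rho^{d_\rho}\rtimes\pi_0$ has a unique irreducible subrepresentation, namely $\pi$. Dually, via contragredient/MVW, $(\rho^\lor)^{d_\rho}\rtimes\pi_0$ has a unique irreducible quotient. Up to normalisation of which parabolic is $P$ and which is $\op$, $\id_{\op}^G((\rho^\lor)^{d_\rho}\otimes\pi_0)$ is identified with $\rho^{d_\rho}\rtimes\pi_0$ via the Weyl element. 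The assertion therefore becomes: $J_{P,\op}$ is a nonzero map from the induction whose unique quotient is $\pi$ to the induction whose unique sub is $\pi$.

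\textbf{Regularity and the image.} Following \cite[§7]{Dat05}, regularity holds when the rational family $J_{P,\op}(\sigma\otimes\chi)$ has no pole at $\chi=1$. The poles are governed by the cuspidal supports: they occur only from pairs $\rho^\lor$ versus $\rho$ (through the $G_{md}$ factor against itself composed with duality), or $\rho^\lor$ versus cuspidal supports inside $\pi_0$. The first kind vanishes because $\rho\not\cong\rho^\lor$. The second is excluded because $\pi_0$ is $\rho$-reduced: a pole would force $\rho$ or $\rho^\lor$ to be linked to the support of $\pi_0$ in a way producing a $\rho$ in $r(\pi_0)$.

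Given regularity, $J=J_{P,\op}((\rho^\lor)^{d_\rho}\otimes\pi_0)$ is nonzero, since its leading term is nonzero by Dat's construction. Hence $\im J$ is a nonzero quotient of a representation with unique irreducible quotient $\pi$, and a nonzero sub of a representation with unique irreducible sub $\pi$. If the multiplicity of $\pi$ in the induced representation is one, which follows from the multiplicity-one Jacquet computation above, then $\im J\cong\pi$.

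\textbf{Expected obstacle.} The main difficulty is the regularity statement in the modular case over $\fl$. The pole analysis requires knowing where the Harish-Chandra $\mu$-function of $\rho\otimes\pi_0$-type cuspidal data vanishes or has poles mod $\ell$. I would try to reduce, via Dat's factorisation of $J$ into rank-one operators, to rank-one computations. There I would use banality so that the $\fl$ reducibility points of $\rho\times\rho\nu^s$ and $\rho\rtimes\tau$ coincide with the reductions of the $\ql$ ones.
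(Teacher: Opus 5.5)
\textbf{The gap is in the regularity step (item 1).} The rest of your skeleton is the standard Jantzen--Mínguez argument that the paper imports from the Atobe--Mínguez appendix: maximal $d$, Frobenius reciprocity, $\rho$-reducedness of $\pi_0$, multiplicity one of $\rho^{d}\otimes\pi_0$ in the Jacquet module, unique socle and cosocle, and image $\cong\pi$. Those steps are fine up to the small repairs noted at the end.

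For regularity you argue that a pole of $J_{P,\op}((\rho^\lor)^{d}\otimes\pi_0\otimes\chi)$ at $\chi=1$ must come from a rank-one cuspidal pair, and that pairs involving $\cusp(\pi_0)$ are excluded by $\rho$-reducedness. But $\rho$-reducedness only says that $\rho$ never occurs as the first factor of $r_m(\pi_0)$. It does not keep $\rho$ out of $\cusp(\pi_0)$, nor $\rho^\lor$, since classical cuspidal supports are symmetric. Once they are in the support, the rank-one bound is not sharp.

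Here is a concrete case. Let $\rho$ be a character of $\FF^\times$ with $\rho^2$ ramified, let $\sigma$ be cuspidal, and let $\pi_0$ be the (irreducible) socle of $\rho\lvert\cdot\rvert\times\rho\rtimes\sigma$.
\begin{itemize}
\item Tadi\'c's formula gives $r_1(\pi_0)=\rho\lvert\cdot\rvert\otimes\rho\rtimes\sigma+\rho^\lor\otimes\rho\lvert\cdot\rvert\rtimes\sigma$. So $\pi_0$ is $\rho$-reduced.
\item One checks $\pi_0=\dr(\pi)$ with $d_\rho(\pi)=1$ for $\pi=\mathrm{soc}(\rho\rtimes\pi_0)$.
\item Factor $J_{P,\op}(\rho^\lor\otimes(\rho\lvert\cdot\rvert\times\rho\rtimes\sigma))$ into rank-one operators. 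Then $\rho^\lor$ must cross the dual $\rho^\lor$ of the $\rho$-entry. That is a $\mathrm{GL}$-factor on two isomorphic cuspidals, which has $\Lambda=1$ by \Cref{P:polesgl}.
\item The same happens for every cuspidal embedding of $\pi_0$.
\end{itemize}
So the cuspidal-level operator genuinely has a pole, while its restriction to $\rho^\lor\rtimes\pi_0$ does not. No analysis of rank-one reducibility points, over $\ql$ or mod $\ell$ (your ``expected obstacle''), can prove $\Lambda=0$ here.

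\textbf{The fix: derive regularity from the multiplicity-one computation you already have.} Put $\tau=(\rho^\lor)^{d}\otimes\pi_0$, which is irreducible since $\rho^{d}$ is. Your Tadi\'c computation, after conjugating by the Weyl element, says that $\tau$ occurs exactly once in $r_{\op}(\ip(\tau))$; it uses $\rho\not\cong\rho^\lor$ and $\rho$-reducedness. The normalized operator at $\chi=1$ corresponds by Frobenius reciprocity to a nonzero map $r_{\op}(\ip(\tau))\to\tau$. The open-cell piece of the geometric-lemma filtration is a subrepresentation isomorphic to $\tau$. The map cannot vanish on it, since otherwise it would factor through a quotient with no constituent $\tau$. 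By \Cref{L:noresidue} this non-vanishing is exactly $\Lambda=0$.

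This is how the paper obtains regularity elsewhere: see the remark after \Cref{L:ratJ} and the proof of \Cref{C:lifting}. It works verbatim over $\fl$. Banality is needed only for two things:
\begin{itemize}
\item the irreducibility of $\rho^{d}$;
\item splitting off the part of a Jacquet module with cuspidal support $d[\rho]$ on the $G_{dm}$-factor, via projectivity of cuspidals. This is the remark the paper makes when importing \Cref{L:dercl}.
\end{itemize}

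\textbf{Two smaller repairs.}
\begin{itemize}
\item The splitting just described, not second adjointness, is what turns a subquotient $\rho^{d}\otimes\sigma$ of $r_P(\pi)$ into a quotient $\rho^{d}\otimes\pi_0$.
\item Prove reducedness via embeddings. If $\pi_0\hookrightarrow\rho\rtimes\pi_1$, then $\pi\hookrightarrow\rho^{d+1}\rtimes\pi_1$, contradicting maximality of $d$. Your Tadi\'c-formula phrasing does not give this, since $r_P(\pi)$ is only a piece of $r_P(\rho^{d}\rtimes\pi_0)$.
\end{itemize}
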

In the case of the general linear group, the corresponding theorem is already enough to prove \Cref{T:main}. In the case of the classical groups, we need two more ingredients.
    
\textbf{Representations of Arthur type}

We use the following results, which stem from the Arthur classification of tempered representations of $G$.
\begin{lemma}[{\cite[§5.3]{AtobeMinguez2020}}]
    Let $\pi$ be an irreducible smooth tempered $\CC$-representation of $G$ which is $\rho$-reduced for all non-self-dual cuspidal representations $\rho$.
    Then there exists a parabolic subgroup $P$ of $G$ such that $\pi$ is a subrepresentation of $\ip(\sigma)$, where $\sigma$ is a self-dual, cuspidal representation of the Levi-factor $M$ of $P$.
\end{lemma}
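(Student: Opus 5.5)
The idea is to read this off the Arthur/Mœglin description of tempered representations. Here "self-dual" means the cuspidal support consists of $\GL$-blocks $\rho$ with $\rho\cong\rho^\lor$ together with a cuspidal representation $\sigma_0$ of a smaller classical group. First, attach to $\pi$ its $L$-parameter $\phi=\bigoplus_i \rho_i\boxtimes S_{a_i}$. Since $\pi$ is tempered, this parameter is bounded, so every $\rho_i$ is unitary. Since the parameter is of orthogonal or symplectic type, each summand is either self-dual of the right parity or occurs together with its dual.

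Second, use the reducedness hypothesis to show that every $\rho_i$ is self-dual. If some $\rho_i\not\cong\rho_i^\lor$ occurred, then $\rho_i\boxtimes S_a$ and $\rho_i^\lor\boxtimes S_a$ would both appear. By the construction of tempered packets, $\pi$ would then embed into $\delta(\rho_i,a)\rtimes\pi'$ for a generalized Steinberg $\delta(\rho_i,a)$. Frobenius reciprocity would place $\rho_i|\cdot|^{(a-1)/2}$ (or its dual twist) in the Jacquet module. This contradicts $\rho$-reducedness for the non-self-dual cuspidal $\rho=\rho_i|\cdot|^{(a-1)/2}$, which is non-self-dual whenever $\rho_i$ is non-self-dual or $a>1$. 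Reducedness with respect to the non-self-dual twists $\rho_i|\cdot|^{x}$, $x\neq0$, similarly forces $a_i=1$ for the Steinberg-type pieces that can be peeled off.

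Third, the remaining $\phi$ is a sum of self-dual $\rho_i\boxtimes S_{a_i}$. By Mœglin's construction, iterated via Jantzen/Atobe–Mínguez derivatives, $\pi$ embeds into $\rho_1|\cdot|^{x_1}\times\cdots\times\rho_r|\cdot|^{x_r}\rtimes\sigma_0$ with $\sigma_0$ cuspidal. Reducedness kills every exponent $x_j\neq0$: a nonzero exponent would give a non-self-dual cuspidal in the Jacquet module. Hence all $x_j=0$, and $\sigma=\rho_1\otimes\cdots\otimes\rho_r\otimes\sigma_0$ is self-dual cuspidal on a Levi $M$.

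The main obstacle is the ordering in the third step. The embedding furnished by the Mœglin–Tadić theory a priori has nonzero half-integral exponents for the $S_{a}$ with $a>1$. One must argue that these segments cannot survive, because the first exponent in any such embedding gives a nonzero $\rho|\cdot|^{x}$-derivative. This uses the fact, from \cite[§5.3]{AtobeMinguez2020}, that the highest derivative is nonzero exactly when an exponent occurs at the start of some embedding.
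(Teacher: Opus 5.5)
\textbf{There is a genuine gap, in your third step, and it cannot be closed: the statement as recorded here lacks a hypothesis, and without it the conclusion is false.}

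\textbf{Where the argument breaks.} Being $\rho$-reduced only constrains the \emph{first} $\mathrm{GL}$-block of $r_m(\pi)$. A nonzero exponent $x_j$ with $j>1$ therefore gives you nothing unless it can be moved to the front. It cannot be moved when it is linked to an earlier zero exponent. For example, $\rho\lvert\cdot\rvert$ sitting behind a self-dual $\rho$ is trapped, because $\rho\times\rho\lvert\cdot\rvert$ is reducible. So ``reducedness kills every $x_j\neq0$'' does not follow. Your proposed remedy (the highest derivative is nonzero iff the exponent begins some embedding) is circular, since the problem is precisely that no such embedding need exist. Such blocks are exactly what your second step leaves behind: a multiplicity-one summand $\rho\boxtimes S_3$ cannot be peeled off as $\delta(\rho,3)\rtimes\pi'$.

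\textbf{The missing hypothesis.} The paper gives no argument here beyond citing Atobe--Mínguez. What is needed, and what their $Z_\rho[0,1]$-derivatives are for, is that $\pi$ also be $Z_\rho[0,1]$-reduced for every self-dual $\rho$. The analogous $\Delta_\rho[0,-1]$-reducedness is automatic for tempered $\pi$ by Casselman. With this hypothesis the critical configuration is handled: if $\pi\hookrightarrow\rho\times\rho\lvert\cdot\rvert\rtimes\pi''$, then $\pi$ embeds either into $Z_\rho[0,1]\rtimes\pi''$ or into $\Delta_\rho[1,0]\rtimes\pi''\hookrightarrow\rho\lvert\cdot\rvert\times\rho\rtimes\pi''$. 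The two reducedness hypotheses exclude these two cases respectively.

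\textbf{Counterexample without it.} Take $G=\Sp_4$ and $\nu=\lvert\cdot\rvert$. Write $\mathbf{1}$ for trivial representations, and $(x_1,x_2)$ for the exponent $\nu^{x_1}\otimes\nu^{x_2}$ of a minimal Jacquet module.
\begin{enumerate}
\item Since $\mathrm{St}_{\mathrm{SL}_2}\hookrightarrow\nu\rtimes\mathbf{1}$, the tempered (hence semisimple) representation $\mathbf{1}\rtimes\mathrm{St}_{\mathrm{SL}_2}$ embeds into $\mathbf{1}\times\nu\rtimes\mathbf{1}$. Its exponents are $(0,1)^2,(1,0)^2$.
\item The representation $\mathbf{1}\times\nu\rtimes\mathbf{1}$ has subrepresentation $\nu^{1/2}\mathbf{1}_{\mathrm{GL}_2}\rtimes\mathbf{1}$, with exponents $(0,1),(0,-1),(-1,0)^2$. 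The quotient is $\nu^{1/2}\mathrm{St}_{\mathrm{GL}_2}\rtimes\mathbf{1}$, with exponents $(1,0)^2,(0,1),(0,-1)$.
\item Counting $(0,1)$ shows that $\mathbf{1}\rtimes\mathrm{St}_{\mathrm{SL}_2}$ does not inject into the quotient. So some irreducible summand $T$ lies in $\nu^{1/2}\mathbf{1}_{\mathrm{GL}_2}\rtimes\mathbf{1}$.
\item Casselman's criterion ($x_1\ge0$, $x_1+x_2\ge0$) then leaves $(0,1)$ as the only exponent of $T$.
\item Hence $r_1(T)$ is $\mathbf{1}$-isotypic on $\mathrm{GL}_1$, and $r_2(T)\cong\nu^{1/2}\mathbf{1}_{\mathrm{GL}_2}$ is not cuspidal. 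So $T$ is tempered and $\rho$-reduced for every non-self-dual cuspidal $\rho$.
\item However, the cuspidal support of $T$ is the Weyl orbit of $\mathbf{1}\otimes\nu$, which contains no self-dual character. So $T$ embeds into no $\ip(\sigma)$ with $\sigma$ self-dual cuspidal.
\end{enumerate}
This $T$ is the non-generic member of the $L$-packet of $S_1\oplus S_1\oplus S_3$. It embeds into $Z_{\mathbf{1}}[0,1]\rtimes\mathbf{1}$, which is exactly the trapped configuration described above.
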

The above Lemma thus quickly reduces the claim to understanding $\ip(\sigma)$, where $\sigma$ is a self-dual, cuspidal representation of $M$ over $\fl$ or $\ql$. 

\textbf{Progenerators}

To understand representations of the above form, we use 
the work of \cite{Heiermann2011} and \cite{Roche_2002}. Let $M^0$ be the intersection of the kernels of all unramified characters of $M$. Let $\sigma'$ be a summand of the restriction of $\sigma$ to $M^0$.
It is possible to understand $\ip(\pi)$ thanks to an observation originally due to Bernstein, see also \cite{Roche_2002}. Namely, \[\ed(\ip(\id_{M^0}^M(\sigma')))\] is a progenerator of a certain block of the category of representations of $G$. Since $\ell$ is banal, and hence any cuspidal representation is projective in its respective block, this property holds also over $\fl$.
We then use the description of \cite{Heiermann2011} of \[\ed(\ip(\id_{M^0}^M(\sigma')))\] in terms of intertwining operators to get the desired description and finish the proof of the main theorem.

As an application of the above tools we have the following proposition.
\begin{prop}
    Assume $\tpi,\tpi'$ are two integral irreducible smooth $\ql$- representations of $G$ whose reductions mod $\ell$ are irreducible and isomorphic. If $\ell$ is banal and one of them is an essentially discrete series representation, so is the other.
\end{prop}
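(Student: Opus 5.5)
Since $\tpi$ and $\tpi'$ are integral with the same irreducible reduction $\pi$, their cuspidal supports reduce to the same cuspidal support mod $\ell$. The plan is to express "essentially discrete series" through a condition on Jacquet modules (Casselman's criterion) and to show that, for banal $\ell$, this condition is determined by $\pi$ alone. Concretely, I will show that the exponents of $\tpi$ along every standard parabolic $P$ can be read off from $r_P(\pi)$, up to the ambiguity coming from unramified characters whose values are congruent mod $\ell$. I will then show that this ambiguity cannot move an exponent across the walls relevant for Casselman's criterion.

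\textbf{Step 1.} Jacquet functors commute with reduction mod $\ell$: $r_P(\fo)$ is a lattice in $r_P(\tpi)$ and $r_P(\fo)\bl\cong r_P(\pi)$. Hence the multiset of cuspidal subquotients of $r_P(\tpi)$ reduces to that of $r_P(\pi)$, and the same holds for $\tpi'$. By the cuspidal lifting theorem \cite[Proposition 4.15]{Dat_Helm_Kurinczuk_Moss_2024}, together with the fact that banal cuspidals of Levi subgroups stay irreducible and their lifts are unique up to twist by unramified characters with values in $1+\mathfrak{m}_{\zl}$, each cuspidal subquotient $\tilde\tau$ of $r_P(\tpi)$ has a partner $\tilde\tau'$ in $r_P(\tpi')$ with $\tilde\tau'\cong\tilde\tau\otimes\chi$. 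Here $\chi$ is an unramified character that is trivial mod $\ell$.

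\textbf{Step 2.} Casselman's criterion asks that the real parts of the central exponents lie strictly inside the obtuse cone. These real parts are the absolute values $\lvert\chi(\varpi)\rvert$, with $\varpi$ a uniformizer, which are powers $q^{s}$. A character $\chi$ trivial mod $\ell$ takes values in $\zl^\times$, so $\lvert\chi\rvert=1$ on elements of the centre and the real parts of the exponents are unchanged. The hypothesis that $\tpi$ is essentially discrete series then transfers to $\tpi'$, once the twist is arranged so that the central characters match. Since the central characters of $\tpi,\tpi'$ agree mod $\ell$, I can twist $\tpi'$ by a character trivial mod $\ell$ to equalise them.

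\textbf{Main obstacle.} The delicate point is that exponents are only defined as characters to $\ql^\times$, and "real part" needs an embedding $\ql\to\CC$. Integrality of $\tpi$ forces the exponents to be integral, so their values on the relevant central elements are units times powers of $q^{1/2}$. I need to check that the valuation-type invariant used in Casselman's criterion, i.e.\ the $q$-power part, is preserved under congruence mod $\ell$. A priori two units $q^{a}u$ and $q^{b}u'$ can be congruent with $a\neq b$ when $q^{a-b}\equiv1\bmod\ell$. This is exactly where banality enters. For exponents occurring in Jacquet modules of $G$, $a-b$ is bounded by a quantity depending only on the rank. Because $\ell\nmid\lvert\mG(k)\rvert$ and $\lvert\mG(k)\rvert$ contains the factors $q^i-1$ for $i$ up to the rank, the order of $q$ mod $\ell$ exceeds that bound, which forces $a=b$.

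If this direct argument proves too fragile, I will fall back on the classification route. I would use the derivative lemma and \Cref{L:ratJ} to reduce to $\rho$-reduced tempered pieces, and then apply Heiermann's progenerator description to $\ip(\sigma)$, where square-integrability is detected by the reducibility points of $\rho\rtimes\sigma$. Banality ensures these points are the same for the lifts.
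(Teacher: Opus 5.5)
\textbf{There is a genuine gap.} It is in Step~2, and the `Main obstacle' paragraph does not repair it.

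The claim that an unramified character trivial mod $\ell$ satisfies $\lvert\chi\rvert=1$ is false. Being $\zl^\times$-valued concerns the $\ell$-adic absolute value, not the complex one transported through $\ql\cong\CC$. Since $\ell\nmid q$, take $k$ a positive multiple of $o(q)$. Then $\lvert\det\rvert^k$ is $\zl^\times$-valued and $\equiv 1\bmod\ell$, yet it takes the value $q^{-k}$ on elements whose determinant is a uniformizer. So it shifts real parts of exponents by $k$.

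Your proposed fix is that the relevant differences $a-b$ are bounded in terms of the rank. This holds for the exponents of the discrete series lift, but not for an arbitrary irreducible $\tpi'$. Bounding the exponents of $\tpi'$ is precisely the content of the proposition.

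The remark following the proposition gives a counterexample to your mechanism. Suppose $\rho\rtimes\sigma$ is irreducible over $\fl$. Then the tempered $\trho\rtimes\tsigma$ (with $\trho$ $\fc$-self-dual) and $\trho\lvert\det\rvert^k\rtimes\tsigma$ (irreducible for $k\gg0$, with $o(q)\mid k$) are both lifts of it. Your Step~1 pairs their Jacquet constituents via $\chi=\lvert\det\rvert^{\pm k}$, and the real parts differ by an unbounded amount. Your argument also never uses the strictness in Casselman's criterion. So, if it were correct, it would show that when one lift is tempered all lifts are, which this example refutes.

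\textbf{What is missing.} You need a mechanism specific to discrete series that rules out such far-shifted lifts. The paper argues by induction on the rank.

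For general linear groups, it uses the Bernstein--Zelevinsky description of essentially discrete series as segment representations $\Z([a,b]_\trho)$. Together with results of M\'{\i}nguez--S\'echerre, this shows that every lift is again of this form, and in particular has banal cuspidal support.

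For classical groups, suppose $\tpi_2$ had a non-positive exponent along a maximal parabolic $P_\alpha$. This gives $\tpi_2\hra\Z([a,b]_{\trho_2})\rtimes\tsigma_2$.
\begin{enumerate}
\item This embedding cannot be an isomorphism. Otherwise $\tpi_1$ would be isomorphic to the matching induced representation, contradicting its discreteness via the Geometric Lemma.
\item Suppose the exponent differed from the matching exponent of $\tpi_1$ by a nonzero shift invisible mod $\ell$. Banality would push it beyond the reducibility bound $d_{W,m}$ of \Cref{P:polesG}. Then \Cref{L:shiftG} would make the induced representation irreducible, which is the excluded case.
\end{enumerate}
The key chain is thus `far-shifted exponent $\Rightarrow$ irreducibly induced $\Rightarrow$ not discrete'. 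This is exactly what separates discrete series from tempered representations.

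Your fallback route does not supply this either. The derivative, \Cref{L:ratJ} and progenerator arguments only handle representations with banal cuspidal support. The unknown lift need not have one a priori: the example above violates condition (5) of that definition. Proving that it does is part of the work.
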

In the last section, we give a proof of the modular Howe-duality conjecture in the strongly banal case for symplectic-orthogonal and unitary dual pairs. The modular local theta correspondence has been studied and developed in great detail in \cite{Trias2025}, where the author managed to prove the respective statements of Howe-duality under a certain hypothesis which he calls \emph{Hypothesis (H)} and in turn boils down to the question whether a certain principal series representation is irreducible. Note that in \cite{Trias2025} Hypothesis (H) was proven for all but finitely many primes $\ell$, however no explicit descriptions of those that satisfy it was known.
Thanks to the work of \cite{Trias2025} it will thus suffice to prove that the reduction mod $\ell$ of a certain integral irreducible representation is irreducible.
Using the above results, we can achieve this without much effort and hence complete the last missing step in the proof of the modular theta correspondence in the strongly banal case as laid out in \cite{Trias2025}. For more details see \Cref{S:Howe}.

Let us finally quickly note that in the non-split case, we will define a class of \emph{banal} representations over $\fl$ and show in \Cref{C:main} that any banal representation admits a lift to $\ql$. To be more precise, a representation is called \emph{banal} if its cuspidal support can be lifted in a \emph{banal} way, a notion that is inspired by the methods of \cite{minguez2013representations}, for more details see \Cref{S:inrep}. In particular, if $\ell$ is banal with respect to $G$, all irreducible representations of $G$ are banal by \Cref{L:cuspsupext}.
\subsection*{Acknowledgements}
I want to express my gratitude towards Alberto Mínguez, for pointing out the problem, suggesting that it might be amenable via some techniques I recently learned, as well as his never-ending support. I  want to thank Erez Lapid for the useful discussions and pointing me towards the work of \cite{Heiermann2011}. Finally, I want to thank Justin Trias and Vincent Sécherre for providing valuable feedback on earlier versions of this paper.
The author was supported by a research grant (VIL53023) from VILLUM FONDEN.
\section{Preliminaries}
\numberwithin{theorem}{section}
\numberwithin{prop}{section}
\numberwithin{lemma}{section}
 In this paper we denote the symplectic group of rank $n$ over $\FF$ by $\Sp_n$ and the split orthogonal group of rank $n$ over $\FF$ by $\Oo_n$. 
 More generally, let $\epsilon\in\{\pm 1\}$, $\EE$ be either an extension of degree two of $\FF$ or $\FF$ itself, and let $W$ be an $\epsilon$-hermitian $\EE$-vector space. By a \emph{classical group} we will mean in this paper a symmetry group $\mG(W)$ of such a vector space, \emph{i.e.} a classical group of symplectic, orthogonal or unitary type. As in the introduction, we will write $G_n=\mathrm{GL}_n(\EE).$
 
 Let $N_G$ be the least-common-multiple of the pro-orders of the open compact subgroups of $G$ in the sense of \cite{Vig96}. We call a prime $\ell$ banal if it does not divide $N_G$.
\begin{lemma}[{\cite[Lemma 5.22]{Dat_Helm_Kurinczuk_Moss_2025}}]\label{L:banalcrit}
    Let $G=G_n,\Oo_n$ or $\Sp_n$.
    A prime $\ell$ is banal for $G$ if and only if $\ell$ does not divide  $\lvert \mG(k)\lvert$.
\end{lemma}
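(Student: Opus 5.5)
The plan is to compare the two divisibility conditions through the structure of compact open subgroups of $G$. By definition, $\ell$ is banal precisely when $\ell\nmid N_G$, where $N_G$ is the least common multiple of the pro-orders of the compact open subgroups. Two facts reduce the statement to a comparison at one maximal compact subgroup:
\begin{enumerate}
\item Every compact open subgroup lies in a maximal one, so $N_G$ is the lcm of the pro-orders of the finitely many conjugacy classes of maximal compact open subgroups.
\item Each such subgroup $K$ is a parahoric or the stabilizer of a facet, and contains its pro-unipotent radical $K^+$ with $K/K^+$ finite.
\end{enumerate}
Since $K^+$ is pro-$p$ and $\ell\neq p$, the $\ell$-part of the pro-order of $K$ equals the $\ell$-part of $\lvert K/K^+\rvert$. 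Hence $\ell$ is banal if and only if $\ell$ divides none of the finite orders $\lvert K/K^+\rvert$, as $K$ runs over maximal compact subgroups up to conjugacy.

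For the direction "banal $\Rightarrow$ $\ell\nmid\lvert\mG(k)\rvert$", take the hyperspecial subgroup $K=\mG(\io)$; this uses that $G$ is split, or unramified over $\io$. The reduction map $\mG(\io)\to\mG(k)$ is surjective with pro-$p$ kernel. So $\lvert\mG(k)\rvert$ divides the pro-order of $K$, up to a power of $p$. If $\ell\mid\lvert\mG(k)\rvert$, then $\ell\mid N_G$, which gives this direction.

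The converse is the main obstacle: assuming $\ell\nmid\lvert\mG(k)\rvert$, we must show $\ell\nmid\lvert K/K^+\rvert$ for every other maximal parahoric $K$.
\begin{enumerate}
\item Here $K/K^+$ is the $k$-points of a reductive group $\overline{\mathrm{M}}_K$ over $k$, possibly disconnected in the orthogonal case, with component group of order a power of $2$. For $\mG=\mathrm{GL}_n$ every maximal compact is conjugate to $\mathrm{GL}_n(\io)$, so this case is immediate.
\item For $\Sp_{n}$ (rank $n$), the reductive quotients of the maximal parahorics are $\Sp_{a}(k)\times\Sp_{b}(k)$ with $a+b=n$.
\item For split $\Oo_{n}$, they are products $\Oo_{a}(k)\times\Oo_{b}(k)$ of orthogonal groups of the two kinds, split or quasi-split, with total rank $n$.
\end{enumerate}

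I would then compare orders via the standard formulas. First,
\[\lvert\Sp_{n}(k)\rvert=q^{n^2}\prod_{i=1}^n(q^{2i}-1).\]
Since the sets $\{1,\ldots,a\}$ and $\{1,\ldots,b\}$ are each contained in $\{1,\ldots,n\}$ and the product splits, $\lvert \Sp_a(k)\times\Sp_b(k)\rvert$ divides $\lvert\Sp_n(k)\rvert$ up to powers of $q$. The required divisibility is really a statement about prime divisors, which reduces to cyclotomic factors $\Phi_d(q)$: every $\Phi_d(q)$ dividing the smaller order must appear in the larger one. For orthogonal groups the orders involve $\prod(q^{2i}-1)$ times $q^{m}\pm1$. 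The factor $q^{a}+1$ arising from a nonsplit even orthogonal factor divides $q^{2a}-1$, and $2a\le 2n$. The factors $q^{2i}-1$ with $i\le n-1$, together with $q^n-1$, appear in $\lvert\Oo_{n}(k)\rvert$, but $q^{2a}-1$ for $a=n$ may not. This needs care in the even split case: I would check that $\Phi_{2a}(q)$ appears whenever $a<n$, and handle $a=n$ using the fact that the nonsplit form of full rank does not occur as a reductive quotient in $\Oo_n$. The factor $2$ from component groups is harmless, since $2\mid q^2-1$ and $q$ is odd. This bookkeeping over the facets is where I expect the real work to lie. Alternatively, one could cite \cite[Lemma 5.22]{Dat_Helm_Kurinczuk_Moss_2025} directly.
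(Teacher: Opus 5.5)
Your proposal is correct, and it takes a different route from the paper. The paper gives no argument for this lemma: it imports it from \cite[Lemma 5.22]{Dat_Helm_Kurinczuk_Moss_2025}, which is the fallback you mention at the end. You instead prove it directly, and the strategy is sound:
\begin{enumerate}
\item reduce $N_G$ to the $\ell$-parts of $\lvert K/K^+\rvert$ over the finitely many conjugacy classes of maximal compact subgroups $K$;
\item use the hyperspecial $\mG(\io)$, with surjective reduction and pro-$p$ kernel, for the implication ``banal $\Rightarrow \ell\nmid\lvert\mG(k)\rvert$'';
\item for the converse, compare the orders of the remaining reductive quotients with $\lvert\mG(k)\rvert$ prime by prime.
\end{enumerate}
This shows transparently why the hyperspecial vertex alone controls banality for these groups. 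The cost is a case-by-case check resting on the Bruhat--Tits description of maximal compact subgroups, including component groups for $\Oo_n$. The citation is uniform and costs one line.

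Two points should be tightened. First, your stated reason that $\lvert\Sp_a(k)\times\Sp_b(k)\rvert$ divides $\lvert\Sp_n(k)\rvert$ up to powers of $q$ does not work as written. Inclusion of the index sets does not control repeated factors such as $(q^2-1)^2$. The divisibility is nevertheless true, because the quotient of the products is the Gaussian binomial coefficient $\binom{n}{a}$ in the variable $q^2$, a polynomial with integer coefficients. In any case you only need the prime-divisor version, which is immediate.

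Second, the non-split residual factors you worry about in the orthogonal case never occur. Take a lattice with $\varpi L^\#\subseteq L\subseteq L^\#$ in a split quadratic space over $\FF$. By Springer's theorem, both residual spaces $L/\varpi L^\#$ and $L^\#/L$ over $k$ are again split. So the reductive quotients of the maximal compact subgroups are $\Oo_{2j+1}(k)\times\Oo^+_{2(n-j)}(k)$ in the odd case and $\Oo^+_{2j}(k)\times\Oo^+_{2(n-j)}(k)$ in the even case. The only nontrivial check is then $q^m-1\mid q^{2m}-1$, where $q^{2m}-1$ occurs in $\lvert\mG(k)\rvert$. In the odd case this needs $m\le n$; in the even case it needs $1\le m\le n-1$, which holds at the non-hyperspecial vertices.
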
 
We recall the well-known formulas for the cardinality of $\mG(k)$.
\[\lvert\mathrm{GL}_n(k)\rvert=\prod_{i=0}^{n-1}(q^n-q^i),\,\lvert\Sp_n(k)\rvert=q^{n^2}\prod_{i=1}^n(q^{2i}-1),\]\[ \lvert\Oo_{2n+1}(k)\rvert=2q^{n^2}\prod_{i=1}^n(q^{2i}-1),\,\lvert\Oo_{2n}(k)\rvert=2q^{n^2-n}(q^n-1)\prod_{i=1}^{n-1}(q^{2i}-1).\]
If $\mM$ is an $\FF$-Levi-subgroup of an $\FF$-parabolic subgroup $\mP$ of $G$ and $\ell$ is a banal prime for $G$, it is also banal for $M$.
We will call $P$ a parabolic subgroup of $G$ if there exists an $\FF$-parabolic subgroup $\mP$ of $\mG$ such that $P$ are the $\FF$-points of $\mP$ and similarly for the Levi-factor.

We fix for the rest of the paper prime $\ell$ which does not divide $q$ and denote by $R$ one of the algebraically closed fields $\fl$ or $\ql$. Most of the time we will assume that $\ell$ is banal with respect to $G$, however some results can be stated in greater generality. We write $o(q)$ for the order of $q\in\fl$ and write $\rep_R(G)$ for the category of admissible and smooth $R$-representations of $G$. We also fix an isomorphism $\ql\ra \CC$ and note that $\pi\in\rep_\ql(G)$ is irreducible if and only if $\pi\otimes_\ql\CC\in\rep_\CC(G)$ is irreducible. We denote by $\irr_R(G)$ the set of isomorphism classes of irreducible representations in $\rep_R(G)$.
We denote the Grothendieck group of the full subcategory of finite length representation by $K_R(G)$ and use as usual $[-]$ to denote the image of a suitable representation in the latter.

Throughout the next paragraph we recall some structural properties of the category $\rep_R(G)$ from \cite{Vig96}.

If $P$ is a parabolic subgroup of $G$ and $M$ its Levi-factor there exist the exact functors of normalized parabolic induction and the normalized Jacquet-functor
\[\ip\colon \rep_R(M)\ra\rep_R(G),\, r_P\colon\rep_R(G)\ra\rep_R(M).\] We choose the square-roots of $q$ in $\fl$ and $\ql$ in the definition of the modular character $\delta_P^{\frac{1}{2}}$ such that they are compatible with reduction $\mod\ell$. Recall that $r_P$ is the left adjoint of $\ip$ by Frobenius reciprocity.

An irreducible representation $\rho\in\irr_R(G)$ is called cuspidal if for all non-trivial parabolic subgroups $P$ of $G$ we have $r_P(\rho)=0$. Since $\ell$ is banal, this is equivalent to $\rho$ being supercuspidal, \emph{i.e.} there exists no non-trivial parabolic subgroup $P$ with Levi-factor $M$ and $\pi\in \rep_R(M)$ such that $\rho$ is a subquotient of $\ip(\pi)$, see for example \cite[Theorem p373]{Vig96}. Moreover, every cuspidal representation is projective in the subcategory of the respective block with fixed central character.
We denote the subset of cuspidal representations of $\irr_R(G)$ by $\irr_{R,c}(G)$.
We write $o(\rho)$ for the cardinality $\lvert\{[\rho\otimes\lvert\det\lvert^k]:k\in\ZZ\}\rvert$ and recall that $o(\rho)$ equals to the order of $q^{f(\rho)}$ in $\fl$, where $f(\rho)$ is a certain divisor of $n$, \emph{cf}. \cite[III]{Vig96}. In particular $f(\rho)\le n$.
Finally, for $\rho\in\irr_{R,c}(G_n)$, we let $\{[\rho\otimes\lvert\det\rvert^k]:k\in\ZZ\}$ be the corresponding cuspidal line and the corresponding cuspidal half-line $\{[\rho\otimes\lvert\det\rvert^\frac{k}{2}]:k\in\ZZ\}$.

We let $d_G$ be $n$ if $G=G_n$ and the dimension of the maximal isotropic subspace of $W$ if $G$ is classical. If $G$ is a general linear group, we call a parabolic subgroup $P$ of $G$ standard if it contains the Borel subgroup of upper-diagonal matrices. If $G$ is classical, we fix a maximal flag of isotropic subspaces and call a parabolic subgroup standard if it contains the stabilizer of this maximal flag.
If $G=G_n$, we then identify the set of standard parabolic subgroups with the (ordered) partitions $
\alpha=(\alpha_1,\ldots,\alpha_k)$ of $n$. If $\pi=\pi_1\otimes\ldots\otimes\pi_k$ is a representation of the Levi-subgroup $M_\alpha$ of $P=P_\alpha$, we write 
\[\pi_1\times\ldots\times\pi_k\coloneqq\ip(\pi).\]

For a classical group, the standard parabolic subgroups are in bijection with (ordered) partitions $\alpha=(\alpha_1,\ldots,\alpha_k)$ of some $r\le d_G$. In this case we write for a representation $\pi=\rho_1\otimes\ldots\otimes\rho_k\otimes \sigma$ of the Levi-factor $M_\alpha=G_{\alpha_1}\times\ldots\times G_{\alpha_k}\times G'$, where $G'$ is another classical group of the same type,
\[\rho_1\times\ldots\times\rho_k\rtimes\sigma\coloneqq\ip(\pi).\]
For $G$ either general linear or classical we write $r_{P_\alpha}=r_\alpha$. We set for $m\le d_G$
\[d_{W,m}\coloneqq \begin{cases}
    \frac{\dim_{\EE} W-m-\epsilon}{2m}&\EE=\FF,\\
    \frac{\dim_{\EE} W-m}{2m}&[\FF:\EE]=2.\end{cases}\]
We also fix a generator $\fc\in\mathrm{Gal}(\EE/\FF)$ and denote for $\pi\in\rep_R(G_n)$ by ${}^\fc\pi$ the $\fc$-twist of $\pi$. We call $\pi$ \emph{$\fc$-self-dual} if $\pi^\lor\cong{}^\fc\pi$.
    
We also denote by $W=W(G)$ the Weyl group of $G$. If $\alpha$ is a partition as above, consider the stabilizer $W^\alpha$ of $M_\alpha$ in $W$. We define the Weyl-group $W_\alpha\subseteq W$ as the minimal (with respect to their length) representatives of the $W(M_\alpha)$-cosets of $W^\alpha$.
\subsection{Geometric Lemma}\label{S:GL}
We now recall the Geometric Lemma of \cite{BerZel77}.
Let $G$ be as above and $P$ and $Q$ two parabolic subgroups of $G$ with Levi-factors $M$ and $N$ and unipotent-factors $U$ and $V$. Moreover, let $\pi\in\rep_{R}(M)$.
Choose an order $\oo_1,\ldots,\oo_k$ of the $Q$-orbits on $P\backslash G$ such that $\oo_j\subseteq \overline{\oo_i}$ implies that $i\le j$.
Let $F(\oo_i)(\pi)$ be the $Q$-invariant subset of $\ip(\pi)$ consisting of those functions whose support is contained in $\bigcup_{j=1}^i\oo_j$.
Moreover, we define for each $\ain{i}{1}{k}$ a representation $\sigma_i(\pi)$ as follows.
Define for $\oo_i$ the groups
 \[M'\coloneqq M\cap w^{-1}Nw,\, N'\coloneqq wM'w^{-1},\,V'\coloneqq M\cap w^{-1}Vw,\, U'\coloneqq N\cap w Uw^{-1},\]
where $w$ is a representative of $\oo_i$. Finally, let $P'\coloneqq M'U'$ and $Q'\coloneqq N'V'$. 
We then set $\sigma_i(\pi)\coloneqq \id_{P'}^{N}\circ \mathrm{Ad}(w)\circ r_{Q'}(\pi)$.
\begin{lemma}[{\cite[Theorem 2.12]{BerZel77}}]
    The filtration \[0=r_Q(F(\oo_0)(\pi))\subseteq r_Q(F(\oo_1)(\pi))\subseteq r_Q(F(\oo_2)(\pi))\subseteq \ldots\subseteq r_Q(F(\oo_k)(\pi))\] is a functorial filtration of $r_Q(\ip(\pi))$ whose subquotients are of the form 
    \[r_Q(F(\oo_{i-1})(\pi))\backslash r_Q(F(\oo_i)(\pi))\cong \sigma_i(\pi)\]
    for $\ain{i}{1}{k}$.
\end{lemma}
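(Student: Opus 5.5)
The plan is to follow Bernstein--Zelevinsky: study $r_Q(\ip(\pi))$ by restricting functions in $\ip(\pi)$ along the $Q$-orbit stratification of $P\backslash G$. All the functors involved (induction, Jacquet functors, conjugation by $w$, taking functions with prescribed support) are exact and defined over any $R$ with $q$ invertible. The complex-coefficient proof should therefore carry over verbatim, and the work is to check that nothing uses characteristic zero.

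First, the Bruhat decomposition gives finitely many $Q$-orbits on $P\backslash G$, indexed by double cosets $P\backslash G/Q$ with representatives $w$ in the relevant Weyl group. The chosen order makes each $\bigcup_{j\le i}\oo_j$ open in $P\backslash G$. Hence $F(\oo_i)(\pi)$ is a $Q$-stable subspace, and $F(\oo_{i-1})(\pi)\subseteq F(\oo_i)(\pi)$ with $F(\oo_k)(\pi)=\ip(\pi)$.

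Next, by exactness of restriction to open/closed pieces, the quotient $F(\oo_i)(\pi)/F(\oo_{i-1})(\pi)$ is identified with the space of smooth functions on the locally closed orbit $P\backslash PwQ$ that are compactly supported modulo $P$ and transform by $\pi$ on the left. This space is the compact induction $\mathrm{c\text{-}ind}_{Q\cap w^{-1}Pw}^{Q}$ of the conjugated, appropriately twisted restriction of $\pi$. Functoriality in $\pi$ is clear at each stage. Applying the exact functor $r_Q$ then gives both the filtration and the identification of the subquotients with $r_Q$ of these compactly induced pieces.

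The main step, and the expected obstacle, is to compute $r_Q\bigl(\mathrm{c\text{-}ind}_{Q\cap w^{-1}Pw}^Q(\cdot)\bigr)$ and show it equals $\id_{P'}^N\circ\mathrm{Ad}(w)\circ r_{Q'}(\pi)$. I would use the Levi decomposition $Q\cap w^{-1}Pw$ and the factorization of the $V$-coinvariants in stages: first along $V\cap w^{-1}Pw$, which produces the Jacquet module $r_{Q'}$ on the $M$ side, and then along the complementary part of $V$, which acts freely and yields induction from $P'$ to $N$. I would then match the modulus characters so that the normalized functors appear. For the coinvariants to commute with compact induction one needs that $V$-coinvariants are computed by integration over compact open subgroups. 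This requires Haar measures on pro-$p$ groups with values in $R$, which exist because $p=\mathrm{char}(k)$ is invertible in $R$ ($\ell\nmid q$). This is exactly the argument of \cite[Theorem 2.12]{BerZel77}, which is valid for $\ZZ[\frac1q]$-algebras as noted in \cite{Vig96}. The delicate bookkeeping is the normalization by $\delta^{1/2}$, and that is consistent because our square roots of $q$ are fixed compatibly.
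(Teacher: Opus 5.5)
Your proposal is correct and takes essentially the paper's route: the paper gives no argument of its own beyond citing \cite[Theorem 2.12]{BerZel77}, and your sketch is that argument carried over to coefficients in $R$, which works because $p$ is invertible in $R$ so $R$-valued Haar measures on the pro-$p$ groups exist. A small remark: cancelling the $\delta^{1/2}$-twists only needs one fixed $q^{1/2}$ in $R$, so the compatibility of square roots with reduction mod $\ell$ plays no role here.
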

\subsection{Integral representations}\label{S:inrep}
We call a representation $\pi\in\rep_{\ql}(G)$ integral if it admits a $\zl$-integral structure in the sense of \cite{Vig96}. Denote by $\rei$ the category of pairs $(\pi,\fo)$, where $\pi$ is integral and $\fo$ is an integral structure of $\pi$. Morphisms are morphisms in $\rep_\ql(G)$ preserving the integral structure. We denote the reduction $\mod \ell$ functor which sends $(\pi,\fo)\mapsto \fo\bl$ by $r_\ell\colon \rei\ra\repf$. Moreover, if $(\pi,\fo)$ is of finite length, we have  that $[r_\ell(\pi)]$ only depends on $\pi$ and not on the choice of integral structure by the Brauer-Nesbitt principle of \cite{Vig96}. We thus write for any finite length integral representation $\pi$ suggestively $r_\ell([\pi])\coloneqq [r_\ell(\pi)]$.
We also recall from \cite[I 9.3]{Vig96} that if $(\pi,\fo)\in \rei(M)$, then $(\ip(\pi),\ip(\fo))\in\rei(G)$ and $r_\ell$ commutes with parabolic induction.  
On the level of representations, $r_P$ does not commute with $\rl$, however, after passing to Grothendieck groups, it does, \emph{i.e.} for $\pi$ a finite length integral representation of $G$, $r_P(\pi)$ is also integral and
\[r_P(\rl[\pi])=[\rl(r_P(\pi))],\]
see  \cite[Proposition 6.7]{Dat05}.
 If $\pi\in\irr_\fl(G)$, we call $\tpi\in\irr_\ql(G)$ a \emph{lift} of $\pi$, if there exists an integral structure $\fo$ on $\tpi$ such that $\fo\bl\cong\pi$. Note that by the above properties, if $\tpi$ is a lift of $\pi$, then any integral structure of $\tpi$ reduces to $\pi$.

We now recall the following proposition, which will provide the foundation for the results of this note.
\begin{theorem}[{\cite[Proposition 4.15]{Dat_Helm_Kurinczuk_Moss_2024}}]\label{T:cuslift}
    Let $G$ be a classical or general linear group and $\ell$ a banal prime.
    Then $\trho\in\irr_{\ql,c}(G)$ admits an integral structure if and only if its central character is $\zl$-valued. Moreover, in this case $\rl([\trho)]$ is irreducible.
    Finally, any $\rho\in\irr_{\fl,c}(G)$ admits a lift $\trho$ to $\ql$.
\end{theorem}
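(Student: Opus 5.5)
We prove the three assertions of \Cref{T:cuslift} via compact induction from open compact-mod-centre subgroups. We use that $\ell$ banal means $\ell\nmid N_G$, so the representation theory of every open compact subgroup is semisimple over $R$ and behaves like its characteristic-zero counterpart.

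\textbf{Integrality.} First let $\trho\in\irr_{\ql,c}(G)$. If $\trho$ admits an integral structure $\fo$, then the centre $Z$ preserves the lattice $\fo$. Hence the central character takes values in $\zl^\times$ on every element of $Z$, so it is $\zl$-valued.

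Conversely, suppose the central character is $\zl$-valued. Fix an open compact subgroup $K$. Since $\trho$ is cuspidal, it is compactly induced, $\trho\cong \mathrm{c\text{-}Ind}_{J}^{G}(\tilde\lambda)$, with $J$ open and compact modulo $Z$ and $\tilde\lambda$ finite dimensional. This holds by the known exhaustion results for classical groups (Stevens) and for general linear groups. We work through this description; for the easy direction it is not needed.

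Because $J/Z$ is compact and the central character is integral, $\tilde\lambda$ has finite-dimensional image modulo a $\zl$-valued character. So the closure of the image of $J$ in $\mathrm{GL}(\tilde\lambda)$ is compact, and it stabilizes a $\zl$-lattice $L$. Then $\mathrm{c\text{-}Ind}_J^G(L)$ is an integral structure of $\trho$.

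\textbf{Irreducibility of the reduction.} Reduction commutes with compact induction, so $\rl(\trho)\cong \mathrm{c\text{-}Ind}_J^G(L\bl)$. By banality, $L\bl$ is semisimple as a representation of the compact group $J\cap K$. The intertwining computation that proves irreducibility over $\ql$ is a Mackey-type statement: only $J$ intertwines $\tilde\lambda$. This statement depends only on the characters of the finite quotients $J^0/J^1$, and these have order prime to $\ell$. It therefore transfers to the reduction: irreducibility of $L\bl$ and the intertwining bound are both preserved, since Brauer characters coincide with ordinary characters on $\ell'$-groups. Hence $\rl(\trho)$ is irreducible and cuspidal. In particular $\rl([\trho])$ is irreducible.

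\textbf{Existence of lifts.} Let $\rho\in\irr_{\fl,c}(G)$. By the exhaustion theorem over $\fl$ (Vignéras, Kurinczuk–Stevens), $\rho\cong\mathrm{c\text{-}Ind}_J^G(\lambda)$. Lift $\lambda$ restricted to the $\ell'$-group $J^0$ to a representation over $\ql$; this is possible since the group has order prime to $\ell$. Extend the lift to $J$ by choosing a $\zl$-valued lift of the central character. This yields a type $\tilde\lambda$ whose compact induction $\trho$ is irreducible by the same intertwining argument, and whose reduction is $\rho$.

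\textbf{Main obstacle.} The hard step is making sure the intertwining criterion, which guarantees irreducibility of the compact induction, survives reduction mod $\ell$. Over $\fl$ one needs both irreducibility and the absence of extra intertwining. In general, reduction can create new intertwining through congruent characters. I would handle this by invoking banality: a $\ell'$ finite group gives a bijection between irreducible representations over $\ql$ and over $\fl$ that preserves intertwining numbers. Alternatively, one can argue abstractly: since cuspidals are projective in their block with fixed central character, $\rl(\trho)$ is a projective object, and its endomorphism ring reduces from $\ql$, where it is a one-dimensional field. Hence the reduction is indecomposable projective with simple head; combined with semisimplicity, it is irreducible.
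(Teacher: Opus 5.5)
The paper gives no proof of this statement. It is imported from \cite[Proposition 4.15]{Dat_Helm_Kurinczuk_Moss_2024} and used as a black box, so your type-theoretic route can only be judged on its own terms.

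The integrality direction is fine, and it needs no types. For any $\ell\neq p$, matrix coefficients of a cuspidal $\trho$ are compactly supported modulo the centre, so when the central character is $\zl$-valued, $\zl[G]v$ is already a lattice.

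There are, however, two genuine problems. The first is scope. For classical groups the exhaustion theorems you invoke (Stevens over $\ql$, Kurinczuk--Stevens over $\fl$) require odd residual characteristic. The paper only assumes $\mathrm{char}\,\FF\neq 2$, so as written your argument does not cover $p=2$.

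The second problem is more serious: the heart of the statement, irreducibility of $\rl([\trho])$, is asserted rather than proved. What must transfer is not a statement about characters of $J^0/J^1$ alone. You need that $\dim\ho_{J\cap gJg^{-1}}(\lambda,{}^g\lambda)$ does not jump under reduction. This requires that $\tilde\lambda$ and ${}^g\tilde\lambda$, restricted to $J\cap gJg^{-1}$, factor modulo the centre through a finite group of order prime to $\ell$. That holds here by banality, but it has to be checked; for $\mathrm{GL}_n$ it involves $J/ZJ^0\cong\ZZ/e\ZZ$ with $e\le n<\ell$, not just $J^0/J^1$. Even then, $I_G(\lambda)=J$ does not formally give irreducibility of $\mathrm{c\text{-}Ind}_J^G\lambda$ over $\fl$. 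The characteristic-zero deduction uses complete reducibility of restrictions to $J$, which you must supply (again from banality). Once you have it, the usual Mackey argument with evaluation at $1$ goes through.

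Your abstract alternative has the same hole. The claim that ``its endomorphism ring reduces from $\ql$'' is exactly what needs proof. In general $\ed_{\zl G}(\fo)\bl\hra\ed_{\fl G}(\fo\bl)$ is only an injection, and new endomorphisms can appear after reduction. The claim does become correct for $\fo=\mathrm{c\text{-}Ind}_J^G L$:
\begin{enumerate}
\item By the $\ell'$-property above, $L$ is projective.
\item Hence $\fo$ is finitely generated projective among smooth $\zl[G]$-modules with central character $\omega$.
\item So $\ho_G(\fo,-)=\ho_J(L,-)$ commutes with $-\bl$.
\item This gives $\ed(\fo\bl)=\ed(\fo)\bl=\zl\bl=\fl$.
\end{enumerate}
You then need semisimplicity of $\fo\bl$. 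It is admissible because $\fo^K\bl=(\fo\bl)^K$ for $K$ pro-$\ell'$. Its constituents are cuspidal, and in the banal case cuspidals are projective and injective in the fixed-central-character category. Semisimplicity together with $\ed(\fo\bl)=\fl$ gives irreducibility and closes the gap, at least for $p$ odd.

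There is also a smaller issue in the lifting step. Lifting $\lambda|_{J^0}$ and the central character does not determine a lift on $J$ when $J\neq ZJ^0$, for example for ramified $E/\FF$ in $\mathrm{GL}_n$. You should lift $\lambda$ on all of $J$ directly, which is possible for the same $\ell'$ reason. The same Hom comparison then gives $I_G(\tilde\lambda)=I_G(\lambda)=J$, so $\mathrm{c\text{-}Ind}_J^G\tilde\lambda$ is irreducible.
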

Recall that if $\ell$ is banal, any $\pi\in\irr_R(G)$ has a well-defined cuspidal (which equals the supercuspidal) support, see \cite[II 2.20]{Vig96}. Namely, if $G$ is a general linear group $\cusp(\pi)\coloneqq [\rho_1]+\ldots+[\rho_k]$, where $\rho_1\otimes\ldots\otimes\rho_k$ is a cuspidal representation of a Levi-subgroup of a parabolic subgroup $P$ of $G$
such that $\pi\hra \ip(\rho_1\otimes\ldots\rho_k)$.
In the classical case it is defined as \[\cusp(\pi)\coloneqq [\rho_1]+\ldots+[\rho_k]+[{}^\fc\rho_1^\lor]+\ldots+[{}^\fc\rho_k^\lor]-\sum_{\rho_i\,\fc\text{-self dual}}[\rho_i]+[\sigma],\] where $\rho_1\otimes\ldots\rho_k\otimes\sigma$ is a cuspidal representation of a Levi-subgroup of a parabolic subgroup $P$ of $G$
such that
\[\pi\hra \ip(\rho_1\otimes\ldots\otimes\rho_k\otimes\sigma).\]
The following is an easy consequence of the fact that the reduction of an integral representation is integral.
\begin{lemma}
    Let $\tpi\in\irr_\ql(G)$ be integral. Then the cuspidal support consists of integral representations.
\end{lemma}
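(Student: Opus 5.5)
The claim is that every member of the cuspidal support of an integral $\tpi\in\irr_\ql(G)$ is integral. The plan is to read off the cuspidal support from a Jacquet module of $\tpi$ and then use that Jacquet modules of integral representations are integral, together with the integrality criterion of \Cref{T:cuslift} for cuspidals.

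First step: write $\cusp(\tpi)$ via an embedding $\tpi\hra\ip(\trho_1\otimes\ldots\otimes\trho_k\otimes\tsigma)$, where $P$ is standard with Levi $M$ and the inducing datum is cuspidal. By Frobenius reciprocity this gives a nonzero morphism $r_P(\tpi)\ra\trho_1\otimes\ldots\otimes\trho_k\otimes\tsigma$. Since $\tpi$ is admissible of finite length, $r_P(\tpi)$ is of finite length, and $\trho_1\otimes\ldots\otimes\tsigma$ is an irreducible quotient of it.

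Second step: by \cite[Proposition 6.7]{Dat05}, as recalled in \Cref{S:inrep}, $r_P(\tpi)$ is integral. I then use the standard fact from \cite[I 9]{Vig96} that an irreducible subquotient of a finite length integral representation is integral. The justification is that the image of an integral structure $\fo$ of $r_P(\tpi)$ in the quotient is a $\zl[M]$-stable lattice: it is generated by the images and is admissible since the quotient is admissible. Hence $\trho_1\otimes\ldots\otimes\trho_k\otimes\tsigma$ is an integral cuspidal representation of $M$.

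Third step: pass from $M$ to the factors. An integral irreducible representation has a $\zl$-valued central character, because the center acts on a lattice by scalars preserving it. So $\trho_1\otimes\ldots\otimes\tsigma$ has $\zl$-valued central character. Restricting to the centres of the individual factors $G_{\alpha_i}$ and $G'$, which embed in the centre of $M$, each $\trho_i$ and $\tsigma$ has a $\zl$-valued central character. By \Cref{T:cuslift}, each of them is then integral.

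Fourth step: the remaining terms ${}^\fc\trho_i^\lor$ of the cuspidal support are integral too. Their central characters are obtained from those of $\trho_i$ by inversion and Galois twist, and these operations preserve $\zl^\times$-valuedness, so \Cref{T:cuslift} applies again. Alternatively, the dual lattice and the $\fc$-twisted lattice serve directly as integral structures.

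The only delicate point is the integrality of the quotient in the second step, i.e.\ that the image of the lattice is genuinely a lattice and not all of the quotient. Here admissibility and finite generation in the sense of \cite{Vig96} are needed. If this becomes awkward, I would use the alternative: since the cuspidal exponents of $\tpi$ occur in $r_P(\tpi)$, the integrality of $r_P(\tpi)$ forces the central exponents of $M$ on it to be $\zl$-valued, and \Cref{T:cuslift} then gives the conclusion directly.
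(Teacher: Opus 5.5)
Your proof is correct and follows the paper's own route. The paper's justification is a single line: Jacquet modules of integral representations are integral (\cite[Proposition 6.7]{Dat05}), so the cuspidal datum, being an irreducible quotient of $r_P(\tpi)$, is integral. Your Frobenius reciprocity step, the integrality of irreducible subquotients from \cite{Vig96}, and the central-character argument with \Cref{T:cuslift} for the individual factors and the terms ${}^\fc\trho_i^\lor$ spell out that line; note only that \Cref{T:cuslift} is stated for banal $\ell$, which is the standing hypothesis here.
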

We say that an integral representation $\tpi\in\irr_\ql(G)$ has \emph{banal} cuspidal support if the following holds for all representations $\trho$ of $G_m$ in the cuspidal support.
\begin{enumerate}
    \item (General linear group) If $[\trho']\in \cusp(\tpi)$ and $\rl([\trho])$ is in the same cuspidal line as $\rl([\trho'])$, then the same is true for $\trho$ and $\trho'$. Moreover, $\trho'\otimes\lvert\det\rvert^k$ is not isomorphic to $\trho\otimes\lvert\det\rvert^\epsilon$, where $k$ is a non-zero integer multiple of $o(\rl([\trho]))$ and $\epsilon\in\{-1,0,1\}$.
    \item (Classical group) If $\rl([\trho])$ is $\fc$-self-dual, then so is $\trho$.
    \item (Classical group) If $[\trho']\in \cusp(\tpi)$ is a second representation such that $\rl([\trho])^\lor=\rl([{}^\fc\trho'])$, then $\trho^\lor\cong{}^\fc\trho'$.
    \item (Classical group) If $[\trho']\in \cusp(\tpi)$ and $\rl([\trho])$ is in the same cuspidal half-line as $\rl([\trho'])$, then the same is true for $\trho$ and $\trho'$. Moreover, $\trho'\otimes\lvert\det\rvert^k$ is not isomorphic to $\trho\otimes\lvert\det\rvert^\epsilon$, where $k$ is a non-zero integer multiple of $o(\rl([\trho]))$ and $\epsilon\in\{-1,0,1\}$.
    \item If $\rl([\trho])$ is of the form $\rho'\otimes\lvert\det\rvert^\frac{k}{2}$, $k\in \ZZ$ and $\rho'$ $\fc$-self-dual, then $\trho\cong\trho'\otimes\lvert\det\rvert^\frac{k}{2}$ with $k\in\ZZ$, $\trho'$ $\fc$-self-dual, and for all non-zero multiples $k'$ of $o(\rl([\trho]))$
    \[\lvert k+2k'\rvert>2d_{W,m} \]
\end{enumerate}
 Finally, a banal lift of a cuspidal support of some $\pi\in\irr_\fl(G)$ is a formal sum of lifts of the cuspidal representations appearing in it that are banal in the above sense.
\begin{lemma}\label{L:cuspsupext}
    Let $\pi\in\irr_\fl(G)$ and $\mG$ either $\mathrm{GL}_n,\Sp_n$ or $\Oo_n$ and $\ell$ a banal prime. Then there exists a banal lift of $\cusp(\pi)$. Moreover, for any banal lift of $\cusp(\pi)$, there exists an integral $\tpi\in\irr_\ql(G)$ with that banal cuspidal support such that $\rl([\tpi])$ contains $[\pi]$.
\end{lemma}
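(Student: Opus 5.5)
We prove the two assertions of \Cref{L:cuspsupext} separately. The first needs a choice of lifts of the cuspidal representations in $\cusp(\pi)$; the second needs an integral constituent of a parabolically induced representation.

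\textbf{Existence of a banal lift of $\cusp(\pi)$.} Write $\cusp(\pi)$ as a sum of cuspidal representations $\rho_i$ of $G_{m_i}$ (together with $\sigma$ in the classical case), and organise them by cuspidal lines, respectively half-lines.

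First choose one representative for each line (half-line) occurring. If that line contains a $\fc$-self-dual representation, take the representative $\rho'$ to be $\fc$-self-dual. Lift each representative via \Cref{T:cuslift}.

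For a $\fc$-self-dual $\rho'$ we need a $\fc$-self-dual lift. Take any lift $\trho'$. Then ${}^\fc\trho'^\lor$ is also a lift, so its central character differs from that of $\trho'$ by a character with trivial reduction. Since the unramified twists reducing to the same representation form a pro-$\ell$ group, and $\ell\neq 2$ by banality, we can take a square root and twist $\trho'$ by it to make it $\fc$-self-dual. Also lift $\sigma$ by \Cref{T:cuslift}.

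Every other member of a line has the form $\rho'\otimes\lvert\det\rvert^{k/2}$ with $k\in\ZZ$, where $k$ is determined modulo $2o(\rho')$. We lift it as $\trho'\otimes\lvert\det\rvert^{k/2}$, choosing the integer representative $k$ of smallest absolute value, and we choose lifts of dual pairs compatibly. This immediately gives conditions (2) and (3), and the first half of (4).

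The remaining conditions, the non-congruence in (1) and (4) and the inequality $\lvert k+2k'\rvert>2d_{W,m}$ in (5), come down to a size estimate. Banality of $\ell$ for $\mG$ (\Cref{L:banalcrit}) means that $\ell$ divides none of $q^{i}-1$ for $i\le n$, respectively none of $q^{2i}-1$ for $i\le d_G$. Since $o(\rho)$ is the order of $q^{f(\rho)}$, this forces $o(\rho)$ to be large compared with $d_G/m$, roughly $o(\rho)\,m>d_G$. The exponents $k$ in a cuspidal support of rank at most $d_G$ can then be kept in an interval too short to contain two values congruent modulo $o(\rho)$, which gives the non-congruence conditions. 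For (5), $d_{W,m}$ is about $\dim W/(2m)$, so $2\lvert k'\rvert\ge 2o(\rho)$ exceeds $\lvert k\rvert+2d_{W,m}$. Checking this inequality exactly, case by case from the order formulas, is the main obstacle. It is here that one uses that $\mG$ is $\mathrm{GL}_n$, $\Sp_n$ or $\Oo_n$, so that the orders are explicit. For the orthogonal groups the extra factor $q^n-1$ needs care.

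\textbf{Existence of $\tpi$.} Let $\tsigma$ be the cuspidal representation of the Levi $M$ built from the chosen lifts. By \Cref{T:cuslift} it is integral and reduces to the cuspidal $\sigma_0$ with $\pi\hra\ip(\sigma_0)$. Hence $\ip(\tsigma)$ is integral of finite length and $\rl([\ip(\tsigma)])=[\ip(\sigma_0)]$ contains $[\pi]$.

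Every irreducible subquotient of $\ip(\tsigma)$ is integral. By Brauer–Nesbitt, $[\pi]$ lies in $\rl([\tpi])$ for some irreducible subquotient $\tpi$ of $\ip(\tsigma)$. Such a $\tpi$ has cuspidal support equal to that of $\tsigma$, which is the prescribed banal lift.
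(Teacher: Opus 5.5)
Your second step is fine: choosing an irreducible subquotient $\tpi$ of $\ip(\tsigma)$ and applying Brauer--Nesbitt is what the paper calls ``immediate''. The construction of the banal lift, however, has a genuine gap.

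\textbf{Wrap-around in the exponent choice.} You lift every member $\rho'\otimes\lvert\det\rvert^{k/2}$ of a line to $\trho'\otimes\lvert\det\rvert^{k/2}$ with $k$ of smallest absolute value. This cuts the line at the point antipodal to an arbitrarily chosen base point $\rho'$. Whenever the support straddles that cut, pairs that are linked mod $\ell$ are lifted to unlinked pairs.

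Concretely, take $\mathrm{GL}_2$ with $o(q)=3$, which is banal. Take the trivial character as base point and $\cusp(\pi)=[\lvert\det\rvert]+[\lvert\det\rvert^{2}]$. Your recipe gives the lifts $\lvert\det\rvert$ and $\lvert\det\rvert^{-1}$, which violate (1) with $k=3$, $\epsilon=1$. Indeed, $\lvert\det\rvert\times\lvert\det\rvert^{-1}$ is irreducible over $\ql$, while its reduction $\lvert\det\rvert\times\lvert\det\rvert^{2}$ has length two. The same failure occurs on non-$\fc$-self-dual half-lines in the classical case.

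Your criterion of an interval ``too short to contain two values congruent modulo $o(\rho)$'' is also not the right one. To exclude differences $\epsilon+j\,o(\rho)$ with $\epsilon\in\{-1,0,1\}$ and $j\neq 0$, the exponents on a line must lie in an interval of length at most $o(\rho)-2$. No size estimate places them there; you have to cut at an unoccupied point.

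That is the idea the paper uses. Banality gives more points on a line than members of $\cusp(\pi)$ on it, so one can choose $\rho_1$ on the line with $\rho_1\otimes\lvert\det\rvert^{-1}\notin\cusp(\pi)$. Write $\rho_i=\rho_1\otimes\lvert\det\rvert^{k_i}$ with $0\le k_i\le o(\rho_1)-2$ and lift to $\trho_1\otimes\lvert\det\rvert^{k_i}$. All exponent differences then have absolute value at most $o(\rho_1)-2$, which gives (1) and the second half of (4).

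\textbf{Condition (2) in the classical case.} Your claim that (2) follows immediately is also false. The half-line of a $\fc$-self-dual $\rho'$ in general contains a second $\fc$-self-dual point $\rho'\otimes\lvert\det\rvert^{o(\rho')/2}$. For $\rho'$ trivial and $o(q)$ even, this is the unramified quadratic character. Your recipe lifts it to $\trho'\otimes\lvert\det\rvert^{\pm o(\rho')/2}$, which is not $\fc$-self-dual over $\ql$.

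Moreover, the support is symmetric under $\rho\mapsto{}^\fc\rho^\lor$, so members of the support near this point are exactly where the wrap-around above occurs. The choice of $\fc$-self-dual base point must therefore also be made according to where the gaps are.

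A caveat: if the support contains both $\fc$-self-dual points, e.g.\ the trivial and the unramified quadratic character for $\Sp_2$ with $o(q)=6$, then (2) forces two $\fc$-self-dual lifts that do not lie on one half-line over $\ql$. In that case the first half of (4) cannot hold literally, and the paper's ``similarly'' does not address this either.

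Finally, (5) remains unverified in your proposal. The paper treats it only after the gap normalisation, by a common twist of the whole line by a multiple of $o(\rho)$.
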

\begin{proof}
The first claim follows from \Cref{L:banalcrit} and the well known formulas for $\lvert\mG(k)\rvert$.
Indeed, if $G$ is a general linear group, we proceed as follows. Let first $\rho_1,\ldots,\rho_j$ be the representations in $\cusp(\pi)$ in some cuspidal line. By the banality of $\ell$ we can assume without loss of generality that $\rho_1\otimes \lvert\det\lvert^{-1}$ does not appear in $\cusp(\pi)$. We then choose a lift $\trho_1$ of $\rho$ and fix a lift $\trho_i$ of $\rho_i$ by writing $\rho_i=\rho_1\otimes\lvert\det\lvert^{k_i}$, $k_i\in\{0,\ldots,o(\rho_1)-2\}$ and set $\trho_i\coloneqq \trho_1\otimes\lvert\det\lvert^{k_i}$. For classical groups the argument is similar. Indeed, one can construct easily as above for each set $\rho_1,\ldots,\rho_j$ of representations in $\cusp(\pi)$ in some cuspidal line lifts that satisfy conditions (2), (3), and (4). For (5), we will only show the claim in the symplectic case, the orthogonal cases follows analogously. We assume without loss of generality that $\rho_1$ is such that $\rho\otimes\lvert\det\rvert^{-1}$ does not appear in the cuspidal support and denote by $\trho_i$ the lift of $\rho_i$ and $k_i$ as in condition (5). We then twist, if necessary, all $\trho_i$ by the same character $\lvert\det\lvert^{k'}$, where $k'$ is some integer multiple of $o(\rho)$ such that $-\frac{2n-m+1}{2m}\le \frac{k_i}{2}$ and $k_1$ is minimal with that property. Since $o(\rho)m> 2n$ by the banality of $\ell$, the lift satisfies also property (5).

    The second claim is immediate.    
\end{proof}
We now make the following definition. 
Let $\pi\in\irr_\fl(G)$ and assume that $\ell$ is banal for $G$. We say $\pi$ is banal if its cuspidal support admits a banal lift. Note that by the above Lemma, in the considered split cases this applies to all irreducible representations if $\ell$ is banal.
\subsection{Langlands datum}
Assume that $R=\ql$ and recall the fixed isomorphism $\ql\ra \CC$. 
A character $\chi=\chi_1\otimes\ldots\otimes \chi_k$ of 
the Levi subgroup $M_\alpha,\,\alpha=(\alpha_1,\ldots,\alpha_k),$ of $G_n$ is called positive if
the real parts of the characters satisfy $\mathrm{Re}(\chi_1)\ge \ldots\ge \mathrm{Re}(\chi_k)$.
Similarly, a character $\chi=\chi_1\otimes\ldots\otimes \chi_k$ of 
the Levi subgroup $M_\alpha,\,\alpha=(\alpha_1,\ldots,\alpha_k)$, of $G$ a classical group is called positive if
the real parts of the characters satisfy $\mathrm{Re}(\chi_1)\ge \ldots\ge \mathrm{Re}(\chi_k)\ge 0.$ In both cases, we call the character strictly positive if all $\ge$ are replaced by $>$.

By Casselman's criterion, we can define a representation $\pi\in\irr_\ql(G)$ to be essentially tempered if for any $P_\alpha\subseteq G$ and $\chi$ an exponent of $r_\alpha(\pi)$, $\chi$ is positive. If $\pi$ has moreover a unitary central character, we call $\pi$ tempered.
If all exponents are strictly positive, we call $\pi$ an essentially discrete series representation, and if its character is unitary, we call it a discrete series representation.

We now recall the classical classification theorem of Langlands, see for example \cite{Borel_Wallach_2000}.
\begin{theorem}[Langlands classification for $p$-adic groups]
    Let $G$ be either a general linear group or a classical group and $\pi\in\irr_\ql(G)$. Then there exists a unique triple $(P_\alpha,\chi,\sigma)$, where $\chi$ is a positive character and $\sigma$ a tempered irreducible representation of $M_\alpha$, such that $\pi$ is a quotient of $S(\pi)\coloneqq\id_{P_\alpha}^G(\sigma\otimes\chi)$ and a subrepresentation of $S'(\pi)\coloneqq \id_{\overline{P_\alpha}}^G(\sigma\otimes\chi)$. Moreover,
    \[\dim_\ql\ho(S(\pi),S'(\pi))=1,\]
    $\pi$ appears with multiplicity $1$ in $S(\pi)$ and is the unique irreducible quotient of $S(\pi)$.
\end{theorem}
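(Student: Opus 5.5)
This is the standard Langlands classification, which the paper quotes from \cite{Borel_Wallach_2000}, so I would follow the classical route rather than anything specific to this paper.

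\textbf{Existence.} Let $\pi\in\irr_\ql(G)$. By Jacquet's subrepresentation theorem and Frobenius reciprocity, $\pi$ embeds into some $\id_Q^G(\tau)$ with $\tau$ irreducible; equivalently $\pi$ is a quotient of some standard module. Among all pairs $(P_\alpha,\nu)$ with $\nu$ an irreducible quotient of $r_{\alpha}(\pi)$ (via contragredients and second adjointness), take one whose central exponent is extremal in the dominant cone. Casselman's criterion then shows the following. After twisting by the real part $\chi$ of the central character, the representation $\sigma$ of $M_\alpha$ has all of its exponents positive, so it is tempered. Moreover, $\chi$ is positive, and strictly positive on the simple roots not in $M_\alpha$ by the choice of $\alpha$. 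Frobenius reciprocity, applied to the opposite parabolic, gives a nonzero map $\pi\to\id_{\overline{P_\alpha}}^G(\sigma\otimes\chi)$. The MVW involution or contragredient duality gives the quotient statement for $S(\pi)$.

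\textbf{Multiplicity one and the Hom dimension.} Apply the Geometric Lemma (\Cref{S:GL}) to $r_{\overline{P_\alpha}}(S(\pi))$. The subquotients are $\sigma_w=\id\circ\mathrm{Ad}(w)\circ r(\sigma\otimes\chi)$ for $w\in W_\alpha$. Compare central exponents using positivity of $\chi$ and temperedness of $\sigma$, where Casselman's criterion bounds the exponents of $r(\sigma)$. For the identity-type double coset, $\sigma\otimes\chi$ appears exactly once. For every other $w$, the exponent is strictly smaller in the partial order defined by $\chi$, so none of the other subquotients contains $\sigma\otimes\chi$. Hence
\[\dim\ho(S(\pi),S'(\pi))=\dim\ho(r_{\overline{P_\alpha}}S(\pi),\sigma\otimes\chi)\le 1,\]
and it equals $1$ because the nonzero intertwining operator $J_{P_\alpha,\overline{P_\alpha}}$ exists. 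The same exponent count shows that any irreducible subquotient of $S(\pi)$ with $\sigma\otimes\chi$ in its Jacquet module is $\pi$, and that $\pi$ occurs with multiplicity one. Any irreducible quotient of $S(\pi)$ has, by Frobenius reciprocity, $\sigma\otimes\chi$ as a quotient of its Jacquet module. Therefore $\pi$ is the unique irreducible quotient; in fact it is the image of the intertwining operator.

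\textbf{Uniqueness of the triple.} Given two triples, compare the extremal exponents of $r(\pi)$. The Langlands data are recovered from the "most positive" exponent of $\pi$, via Langlands' geometric lemma on projections onto the positive cone, and this forces $P_\alpha$, $\chi$ and $\sigma$ to agree.

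\textbf{Main obstacle.} I expect the main difficulty to be the convex-geometry step: showing that for $w\neq1$ the exponents of $\sigma_w$ are strictly smaller than $\chi$ in the dominance order. This is Langlands' lemma on the positive chamber, and I would take it from \cite{Borel_Wallach_2000}. The special orthogonal and unitary cases also need $d_W$ to be handled, but the argument is uniform.
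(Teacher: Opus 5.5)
Your outline is the standard Borel--Wallach argument, and the paper gives no proof of its own here (it only cites Borel--Wallach); you use an extremal exponent plus Casselman's criterion for existence, the Geometric Lemma with Langlands' lemma on the positive chamber for multiplicity one, and the contragredient composed with MVW (not either one alone) to pass between $S'(\pi)$ and $S(\pi)$, so the approach is essentially the same. Your multiplicity-one, unique-quotient and uniqueness steps rely on $\chi$ being \emph{strictly} positive on the simple roots outside $M_\alpha$, which your existence step supplies, so the statement must be read with that strict condition: with merely positive $\chi$ uniqueness fails, since for unitary characters $\mu_1,\mu_2$ of $G_1$ both $(G_2,\mathbf{1},\mu_1\times\mu_2)$ and $(P_{(1,1)},\mathbf{1},\mu_1\otimes\mu_2)$ meet every requirement for the irreducible tempered $\mu_1\times\mu_2$.
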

\subsection{MVW-involutions}
Let us now recall the MVW-involution of $\rep_R(G)$ of \cite{Waldspurger_1987}, where $G$ is a classical group. We state the properties relevant for our considerations here.
\begin{prop}\label{P:MVW}
    There exists a covariant involution 
    \[(-)\mvw\colon \rep_R(G)\ra \rep_R(G)\] with the following properties.
    \begin{enumerate}
        \item If $\pi\in\irr_R(G)$ then $\pi\mvw\cong\pi^\lor$.
        \item If $\rho_1\otimes\ldots\rho_k\otimes \sigma$ is a representation of a standard Levi-factor $M$ of $G$, then 
        \[(\rho_1\times\ldots\times \rho_k\rtimes\sigma)\mvw\cong {}^\fc\rho_1\times\ldots\times{}^\fc\rho_k\rtimes \sigma\mvw\]
    \end{enumerate}
\end{prop}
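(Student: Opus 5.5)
The MVW involution is taken from \cite{Waldspurger_1987}, where it is the functor $\pi\mapsto\pi^\delta$ given by conjugation by an element $\delta$ of a similitude group. My plan is to recall this construction and check the two properties directly; I will make sure the construction works verbatim over $R=\fl$, since $\ell\neq 2$ and $\ell\nmid q$.

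\textbf{Construction.} Let $W$ be the $\epsilon$-hermitian space. Choose an $\FF$-linear (or, in the unitary case, $\fc$-semilinear) bijection $\delta\colon W\ra W$ satisfying $\langle\delta w,\delta w'\rangle=\langle w',w\rangle$, or the appropriate conjugate in the hermitian case. Conjugation by $\delta$ then normalizes $G=\mG(W)$. For $\pi\in\rep_R(G)$ define $\pi\mvw$ to be $\pi$ with the action $g\mapsto\pi(\delta g\delta^{-1})$.
\begin{enumerate}
\item This is visibly an exact covariant functor commuting with the structures involved.
\item Since $\delta^2$ acts on $W$ by a scalar, hence centrally, $(\pi\mvw)\mvw\cong\pi$, so the functor is an involution.
\item The functor preserves smoothness and admissibility.
\end{enumerate}

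\textbf{Property (1).} This is the MVW theorem. Over $\CC$, and hence over $\ql$ via the fixed isomorphism, it follows from \cite{Waldspurger_1987}. The argument there proves that every $\delta$-invariant distribution, namely the character, is invariant under $g\mapsto \delta g^{-1}\delta^{-1}$. The resulting isomorphism $\pi^\lor\cong\pi\mvw$ comes from the fact that conjugation by $\delta$ sends $g$ to an element conjugate to $g^{-1}$.

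For $R=\fl$ characters are not available, so I would argue as follows.
\begin{enumerate}
\item If $\pi$ is cuspidal, lift it to $\ql$ via \Cref{T:cuslift}. Both $(-)^\lor$ and $(-)\mvw$ are compatible with reduction mod $\ell$, and the reduction of the lift is irreducible, so the isomorphism over $\ql$ descends.
\item For general irreducible $\pi$, embed $\pi$ into a parabolic induction of a cuspidal representation. Then use the compatibility in (2) together with the corresponding statement for $\mathrm{GL}_n$, namely ${}^\fc\rho\mvw$-type Gelfand–Kazhdan duality $\rho^\lor\cong{}^\fc(\rho^{\theta})$. Finally compare socles and cosocles.
\end{enumerate}
I expect this modular case of (1) to be the main obstacle. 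An alternative is to invoke the modular MVW result of Trias \cite{Trias2025}, which proves it in general via a lattice argument on Hecke algebras at level $K$, using that $\delta$ normalizes suitable compact open subgroups $K$.

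\textbf{Property (2).} Choose $\delta$ so that it stabilizes the fixed maximal isotropic flag up to a diagonal element, acting on each $\mathrm{GL}$-block through $g\mapsto {}^\fc g$, composed with a fixed element of the block. Then $\delta P\delta^{-1}=P$ for standard $P$. Transporting functions $f\mapsto f(\delta^{-1}\cdot\delta)$ gives an isomorphism
\[\ip(\pi)\mvw\cong \ip(\pi^{\delta}),\]
where $\delta_P$ is preserved, so the isomorphism holds for normalized induction as well. On the Levi factor, $\pi^\delta$ restricts to ${}^\fc\rho_i$ on each $G_{\alpha_i}$, up to an inner conjugation which does not change the isomorphism class, and to $\sigma\mvw$ on $G'$. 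This gives (2).
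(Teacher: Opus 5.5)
Your construction of $\delta$ and your verification of (2) are fine. For $R=\ql$, property (1) is exactly the theorem of \cite{Waldspurger_1987}, and the paper offers nothing beyond recalling the involution from that reference, so up to here you agree with it. (Small slip: for $\mathrm{GL}_n(\EE)$, Gelfand--Kazhdan gives $\rho^\lor\cong\rho^\theta$, not ${}^\fc(\rho^\theta)$. The $\fc$ only enters through the Weyl element acting on a $\mathrm{GL}$-block, which sends $\rho$ to ${}^\fc\rho^\lor$.)

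The gap is your step for (1) over $\fl$ when $\pi$ is not cuspidal. Embedding $\pi$ into $\rho_1\times\ldots\times\rho_k\rtimes\sigma$ and using (2) plus the cuspidal case only shows that $\pi\mvw$ and $\pi^\lor$ are constituents of induced representations with the same semisimplification. Comparing socles and cosocles then gives at best a bijection between constituents, not the identity. The failure occurs exactly in the case the paper needs. Take $\rho$ $\fc$-self-dual with $\rho\otimes\sigma$ of type II, so $\rho\rtimes\sigma=\pi_1\oplus\pi_2$ with $\pi_1\ncong\pi_2$. Your ingredients give $(\rho\rtimes\sigma)\mvw\cong\rho^\lor\rtimes\sigma^\lor\cong(\rho\rtimes\sigma)^\lor$, hence $\pi_1\mvw\oplus\pi_2\mvw\cong\pi_1^\lor\oplus\pi_2^\lor$. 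This is equally consistent with $\pi_1\mvw\cong\pi_2^\lor$. Even the Jacquet modules do not separate the two, since $r_P(\pi_1)\cong r_P(\pi_2)\cong\rho\otimes\sigma$. Over $\ql$ what separates them is precisely the character argument of \cite{Waldspurger_1987}, and your sketch has no substitute for it mod $\ell$.

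You also cannot escape by lifting $\pi$ to $\ql$ and descending as in your cuspidal step; within the paper that is circular. The lifting results rest on \Cref{P:basecasetemperedl}, whose proof identifies socle and cosocle multiplicities of $\Pi_r$ using (1) for exactly these non-cuspidal constituents. Your cuspidal step itself is correct, but it uses \Cref{T:cuslift} and hence banality.

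So the modular case of (1) needs a genuinely modular input. One option is a modular MVW theorem from the literature, for instance the appeal to \cite{Trias2025} you mention, if it indeed contains it; then you are citing rather than proving, just as the paper does. The other is a direct argument. For example, simple modules over $\fl$-Hecke algebras of pro-$p$ level are still determined by their traces. This reduces the question to the invariance of the $\fl$-valued trace distribution of $\pi$ under $g\mapsto\delta g^{-1}\delta^{-1}$, and that invariance is the real content, since the proof in \cite{Waldspurger_1987} uses characteristic-zero harmonic analysis. As written, \emph{compare socles and cosocles} does not establish (1) over $\fl$.
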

\section{Intertwining operators}
We fix for this section a standard parabolic subgroup $P$ of $G$ with Levi-factor $M$ and $P'$ a parabolic subgroup with the same Levi-factor. Let $\xc_R[M]$ be the ring of unramified characters of $M$ for $R\in\{\fl,\ql\}$ and $\cK$ its quotient-field. We let $M^0$ be the intersection of all kernels of unramified characters of $M$, hence $\xc_R[M]=R[M^0\backslash M]$.
Let $\pi\in\repr(M)$, let $\psi\un\colon M\ra \cK$ be the tautological character and denote for $\pi\in \repr(M)$ by $\pi\un\coloneqq \pi\otimes_R\cK\otimes\psi\un$ the base change.
Following \cite[§7]{Dat05} and \cite[§IV]{waldspurger2003plancherel} we recall the rational intertwining operators
\[J_{P,P'}(\pi\un)=J_{P,P'}^G(\pi\un)\colon \ip(\pi\un)\ra\id_{P'}^G(\pi\un).\]
Note that part of the construction of \cite[§7.3]{Dat05} is a certain additive character $M^0\backslash M\ra \ZZ$, which can be interpreted as giving a ring-map \[\nu\colon \xc_R[M]\ra R[T,T^{-1}].\] We fix this character once and for all and assume that it is integral and contained in the negative fundamental chamber corresponding to the relative root system of $P$.
These morphisms are rational in the following sense.
According to the Iwasawa-decomposition $G=PK,\,G=P'K,\,K\coloneqq \mathrm{G}(\io)$. For $\pi\in\repr(M)$, we have a natural restriction map $\res_K(\pi)\colon \ip(\pi)\ra\id_{P\cap K}^K(\pi)$, which is an isomorphism and has the property that for any unramified character $\chi$ of $M$
$\id_{P\cap K}^K(\pi\otimes\chi)=\id_{P\cap K}^K(\pi)$ and $\id_{P\cap K}^K(\pi\un)=\id_{P\cap K}^K(\pi)\otimes_R \cK$.
By abuse of notation we will denote by $\res_K(\pi)$ the analogous restriction map after replacing $P$ by $P'$.
The map $\res_K(\pi)$ has then the property that for any $f\in \id_{P\cap K}^K(\pi)$, there exists a finite set $\{f_1,\ldots,f_k\}$ in $\id_{P'\cap K}^K(\pi\un)$ and rational functions $P_i\in R(T)$ such that for every unramified character $\chi$ of $\FF^\times$ we have 
\[\res_K(\ji(\pi)(\res_K^{-1}(f)\otimes\chi(\nu))=\sum_iP_if_i,\] where we write $\chi(\nu)$ for the corresponding unramified character of $M$.

Since we assume $\pi$ to be admissible, there exists some $P\in R[T,T^{-1}]$ such that $P\cdot J_{P,P'}(\pi\un)$ preserves functions with entries in $\pi\otimes_R\xc_R[M]\otimes\psi\un\otimes_\nu R[T,T^{-1}]$.
We can evaluate 
$P\cdot J_{P,P'}(\pi\un)$ at $\chi(\nu)$ to obtain a morphism \[J_{P,P'}(\pi\otimes\chi(\nu))=J_{P,P'}^G(\pi\otimes\chi(\nu))\colon \ip(\pi\otimes\chi(\nu))\ra\id_{P'}^G(\pi\otimes\chi(\nu)).\] One can choose $P$ such that the above morphism does not vanish, and we will denote this $P$ by $P(\pi\otimes\chi(\nu))$, and this defines $J_{P,P'}(\pi\otimes\chi(\nu))$ uniquely up to a scalar in $R$. 
We let $\Lambda(\pi,P,P')$ be the order of the zero of $P(\pi)$ at $T=1$.  
If $\pi$ is irreducible, or a subquotient of a representation induced by irreducible representations, we denote the pole of the associated $j$-function by $\alpha(\pi,P)$. Recall that $j$-function $j(\pi,P)$ is given in this case by the scalar map $J_{\op,P}(\pi\un)\circ J_{P,\op}(\pi\un)$. Again composing with $\nu$ gives us a well defined notion of the order of a pole or zero.

We recall that $\alpha(\pi,P)$ only depends on the cuspidal support of $\pi$.
We also denote  \[\pad(\pi,P)\coloneqq \Lambda(\pi,P,\op)+\Lambda(\pi,\op,P)+\alpha(\pi,P).\]
The following is then easy to see.
\begin{lemma}\label{L:pder}
    For $\pi$ and $P$ as above we have $\pad(\pi,\op)=\pad(\pi,P)\ge 0$. Moreover, the following are equivalent.
    \begin{enumerate}
        \item $J_{P,\op}(\pi)$ is an isomorphism.
        \item $J_{\op,P}(\pi)$ is an isomorphism.
        \item $\pad(\pi,P)=0$.
    \end{enumerate}
\end{lemma}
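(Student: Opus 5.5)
We compare the two normalised operators through their composite. Write $J=P(\pi)\,J_{P,\op}(\pi\un)$ and $J'=P'(\pi)\,J_{\op,P}(\pi\un)$, where $P(\pi)$ and $P'(\pi)$ are polynomials chosen so that $J$ and $J'$ are regular and nonzero after specialisation at $T=1$. The composite $J'\circ J=P(\pi)P'(\pi)\,j(\pi,P)$ is a scalar rational function in $T$. Its order of vanishing at $T=1$ is
\[\Lambda(\pi,P,\op)+\Lambda(\pi,\op,P)+\alpha(\pi,P)=\pad(\pi,P),\]
with the sign convention that a pole of $j$ of order $\alpha$ enters as $\alpha$, the quantity being a zero order of the product.

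\textbf{Symmetry.} The identity $\pad(\pi,\op)=\pad(\pi,P)$ reduces to $j(\pi,\op)=j(\pi,P)$. Both are scalar rational functions, so
\[J_{P,\op}J_{\op,P}J_{P,\op}=j(\pi,\op)\,J_{P,\op}=J_{P,\op}\,j(\pi,P).\]
Since $J_{P,\op}(\pi\un)$ is a nonzero map over the field $\cK(T)$, the two scalars agree. The $\Lambda$ terms are simply swapped.

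\textbf{Nonnegativity.} The composite $J'\circ J$ is an endomorphism of $\ip(\pi\un)$ that is regular at $T=1$, because both factors are regular. A scalar rational function that is regular at $T=1$ has no pole there, so its order at $T=1$, namely $\pad(\pi,P)$, is $\ge 0$.

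\textbf{Equivalences.} Suppose $\pad(\pi,P)=0$. Then the specialisation of $J'\circ J$ at $T=1$ is a nonzero scalar. Hence $J_{\op,P}(\pi)\circ J_{P,\op}(\pi)$ is a nonzero multiple of the identity. The same argument in the other order, using the symmetry just proved, shows that $J_{P,\op}(\pi)\circ J_{\op,P}(\pi)$ is also a nonzero scalar. So both operators are isomorphisms, which gives (3)$\Rightarrow$(1),(2).

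Conversely, assume (1): $J_{P,\op}(\pi)$ is an isomorphism. We must show that $J_{\op,P}(\pi)\circ J_{P,\op}(\pi)$ is nonzero, and this is the main obstacle. Since it equals the scalar $c$ obtained by specialising $P(\pi)P'(\pi)j$ at $T=1$, it is enough to show $c\neq 0$.

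To do this, write $P(\pi)P'(\pi)j=(T-1)^{\pad}u$ with $u(1)\neq 0$. Suppose $\pad>0$. Then $J'\circ J$ vanishes at $T=1$. Since $J$ specialises to an isomorphism, $J$ is invertible over the local ring at $T=1$: invertibility of the specialisation lifts by Nakayama, using that admissible induced modules over $R[T]_{(T-1)}$ are free of the appropriate rank on each $K$-fixed vector level.

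Therefore $J'=(J'\circ J)\circ J^{-1}$ is divisible by $(T-1)$. Its specialisation at $T=1$ is then zero, contradicting the choice of $P'(\pi)$ making $J_{\op,P}(\pi)$ nonzero. Hence $\pad=0$.

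The implication (2)$\Rightarrow$(3) follows symmetrically, using $\pad(\pi,\op)=\pad(\pi,P)$.

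The delicate step is justifying the local invertibility of $J$ from invertibility at the special fibre. I would work on each finite-dimensional space of $K'$-fixed vectors, with $K'$ a small open compact subgroup, via $\res_K$, where this reduces to a determinant being a unit at $T=1$.
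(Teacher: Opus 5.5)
Your proof is correct. The paper gives no proof of its own (it only calls the lemma ``easy to see''), and your argument is the natural one it leaves to the reader: $J'\circ J=P(\pi)P'(\pi)\,j(\pi,P)$ is a scalar, regular at $T=1$, of order $\pad(\pi,P)$ there, and your triple-composite identity gives $j(\pi,\op)=j(\pi,P)$.

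For this bookkeeping, $\alpha$ has to be read as the order of \emph{vanishing} of $j$ at $T=1$. The values $\Lambda=1$, $\alpha=-2$ for $\rho_1\cong\rho_2$ in Proposition~\ref{P:polesgl} force this reading. Your parenthetical about the sign convention is muddled, but the formula you actually use is the right one.

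The step you flag as the main obstacle is unnecessary. If $\pad(\pi,P)>0$, specialising the regular composite gives $J_{\op,P}(\pi)\circ J_{P,\op}(\pi)=0$. Under (1), surjectivity of $J_{P,\op}(\pi)$ then forces $J_{\op,P}(\pi)=0$. Under (2), injectivity of $J_{\op,P}(\pi)$ forces $J_{P,\op}(\pi)=0$. Either way this contradicts the choice of the normalising polynomial, so neither the Nakayama lifting over $R[T]_{(T-1)}$ nor the symmetry is needed for these implications.
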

Moreover, one can characterize precisely when $\Lambda(\pi,P,P')=0$, see for example the proof of \cite[Proposition IV.2.2]{waldspurger2003plancherel}, see also \cite[Lemma 4.7]{Droschl2025inter}. We will use here the language of \Cref{S:GL}.
\begin{lemma}\label{L:noresidue}
    Let $\pi$ and $P,P'$ be as above. Then $\Lambda(\pi,P,P')=0$ if and only if the map
    $r_{P'}(\ip(\pi))\ra \pi$ obtained by Frobenius reciprocity does not vanish on $r_{P'}(F(\oo'))(\pi)\subseteq r_{P'}(\ip(\pi))$ if $\oo'\subsetneq\overline{\oo}$. 
\end{lemma}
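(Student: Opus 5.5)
We will prove \Cref{L:noresidue} by computing the leading term of the rational operator $J_{P,P'}(\pi\un)$ orbit by orbit on $P\backslash G/P'$, using the filtration of the Geometric Lemma in \Cref{S:GL} with $Q=P'$. Let $\oo$ be the open $P'$-orbit on $P\backslash G$; the unique $P'$-equivariant quotient $r_{P'}(\ip(\pi))\to\pi$ factors through it. For $f$ supported in the open cell $\oo$, the defining integral $\int_{U'\cap wUw^{-1}\backslash U'}f(w u g)\,du$ converges for every unramified twist. Hence it is a polynomial in $T$, and at $T=1$ it gives the non-zero map dual to the quotient above. This shows that $J_{P,P'}(\pi\un)$ has no pole on $F(\oo)$ and never vanishes identically there.

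Next we interpret $\Lambda(\pi,P,P')$ as the maximal order of pole of $J_{P,P'}(\pi\un)$ at $T=1$ over all $f\in\ip(\pi)$. By the normalization defining $P(\pi)$, this is exactly the order of its zero. Using Frobenius reciprocity, composing $J_{P,P'}(\pi\un)$ with evaluation at the identity corresponds to a $\cK$-valued functional
\[\lambda_{\un}\colon r_{P'}(\ip(\pi\un))\to\pi\un.\]
On $r_{P'}(F(\oo))$ this functional is the regular map from the first step, and it extends rationally to all of $r_{P'}(\ip(\pi\un))$. We then check that $\Lambda>0$ if and only if $\lambda_{\un}$ has a pole on some $r_{P'}(F(\oo'))$ with $\oo'\subsetneq\overline{\oo}$. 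In that case the leading (residue) term of $J$ is a non-zero morphism that kills $F(\oo)$, so the normalized operator at $T=1$ vanishes on the open cell. The map from Frobenius reciprocity is then supported only on the smaller cells.

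For the converse, suppose $\Lambda=0$. Then $J_{P,P'}(\pi)$ is the honest specialization, and its associated functional restricted to the open cell is the non-zero integral from the first step. By the filtration, the Frobenius map is then determined by its restriction to $r_{P'}(F(\oo))$ and does not vanish there. In the other direction, a pole of $\lambda_{\un}$ along a smaller orbit $\oo'$ forces, after multiplying by $P(\pi)$ and specializing, a map that vanishes on the open cell but not on $\oo'$. Matching these two descriptions gives the equivalence in the form stated: the specialized Frobenius map is either non-zero on the open-cell piece or supported only on the smaller $r_{P'}(F(\oo'))$. This is essentially the argument of \cite[Proposition IV.2.2]{waldspurger2003plancherel} and \cite[Lemma 4.7]{Droschl2025inter}, which we follow.

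The main obstacle is to make the rationality and pole-order statements rigorous over $R=\fl$. There we cannot use analytic convergence and must work with Dat's algebraic construction of $J$ \cite[§7]{Dat05}. We will handle this through the explicit rational expression of $\res_K(J(\res_K^{-1}f\otimes\chi(\nu)))$ recalled above, together with admissibility. Admissibility guarantees that only finitely many $f_i$ occur, so pole orders can be read off coefficientwise in $R(T)$.
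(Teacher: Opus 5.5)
Your strategy is the right one, and it is the argument the paper defers to. On functions supported in the cell $PP'$ the intertwining integral is a finite sum, and one compares this with the pole order $\Lambda(\pi,P,P')$. The problem is that you state the key step for the operator rather than its Frobenius functional, and as written it is false.

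You assert that $J_{P,P'}(\pi\un)$ has no pole on $F(\oo)$. You also assert that its leading term is ``a non-zero morphism that kills $F(\oo)$'', so that the normalized operator vanishes on the open cell. Only the functional $\lambda\un=\mathrm{ev}_1\circ J_{P,P'}(\pi\un)$ behaves this way.

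\begin{itemize}
\item The space $F(\oo)$ is only $P'$-stable, and $(J_{P,P'}(\pi\un)f)(g)=\lambda\un(g\cdot f)$ with $g\cdot f\notin F(\oo)$ in general.
\item Since $\oo$ is open, finitely many translates of $\oo$ cover $P\backslash G$. So the $G$-translates of $F(\oo)$ span $\ip(\pi)$, and no non-zero $G$-morphism can kill $F(\oo)$. Your sentence is therefore self-contradictory.
\item Concretely, take $G=\mathrm{GL}_2$, $P=B$, $\pi=\chi\otimes\chi$. Then $\Lambda=1$ and the normalized $J_{B,\overline{B}}(\pi)$ is an isomorphism (\Cref{P:polesgl}), so it does not vanish on the open cell. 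For $f$ supported in $B\overline{B}$ with $f(1)\neq0$, the value $(J_{B,\overline{B}}(\pi\un)f)(w)$ at the non-trivial Weyl element is a geometric series in $T$ with a pole at $T=1$.
\end{itemize}

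The repair, for $P'=\op$, is to work only with $\lambda\un$.
\begin{itemize}
\item For $f$ supported in $P\overline{U}$ compactly modulo $P$, $\lambda\un(f)=\int_{\overline{U}}f_T(\bar u)\,d\bar u$ is a finite sum, hence a Laurent polynomial in $T$.
\item If $\Lambda>0$, the Frobenius functional of the normalized operator is the value at $T=1$ of $(T-1)^{\Lambda}\lambda\un$. This vanishes on $r_{\op}(F(\oo))$.
\item If $\Lambda=0$, the functional is $f\mapsto\int_{\overline{U}}f(\bar u)\,d\bar u$. It is non-zero on $f$ supported in $PK_1$, with $K_1\subseteq\overline{U}$ a small pro-$p$ subgroup, and this works over $\fl$ as well.
\end{itemize}
That is the whole proof.

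Your choice of orbit is also wrong for general $P'$. The relevant orbit is $\oo=P\backslash PP'$, the $w=1$ term whose Geometric Lemma subquotient is $\pi$ itself. $J_{P,P'}$ involves no Weyl element, so your integral with $w$ is not the right one. Since $\dim P\backslash PP'=\dim(U'\cap\overline{U})$ while $\dim P\backslash G=\dim\overline{U}$, this orbit is open only for $P'=\op$. For other $P'$ your open-orbit version of the statement is false. For example, $P'=P$ gives $J_{P,P}=\mathrm{id}$ and $\Lambda=0$, yet the Frobenius functional is evaluation at $1$, which kills every function supported in the open $P$-orbit.

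Finally, two side claims are wrong and not needed. Functions supported on an open union of orbits form the \emph{bottom} of the filtration, not a quotient. Also $\ho(r_{P'}(\ip(\pi)),\pi)$ need not be one-dimensional. So drop both the ``unique quotient'' through which the Frobenius map factors and the claim that this map is ``determined by its restriction'' to $r_{P'}(F(\oo))$.
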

Assume now that $P=P_\alpha$ and $P'=wPw^{-1}$ for some $w\in W_\alpha$. We let $s_1\ldots s_k=w$ be a reduced expression of $w$. 
Let $G_i$ be the Levi subgroup of  $(P\cup s_iPs_i) $ and $Q$ and $Q_i$ the parabolic subgroups of $G_i$ coming from $P$ and $s_iPs_i$.
The following is an immediate consequence of \cite[Proposition 7.8]{Dat05}.
\begin{lemma}\label{L:compositionJ}
    Assume that $J_{Q,Q_i}^M(\pi)$ is an isomorphism for $\ain{i}{1}{k}$. Then $J_{P,P'}(\pi)$ is an isomorphism and \[\Lambda(\pi,P,P')=\sum_{i=1}^k\Lambda(\pi,Q,Q_i).\]
\end{lemma}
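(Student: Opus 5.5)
The plan is to reduce \Cref{L:compositionJ} to the cocycle (factorization) property of rational intertwining operators, \cite[Proposition 7.8]{Dat05}. For a reduced expression $w=s_1\cdots s_k$, this property says that over $\cK$
\[J_{P,P'}(\pi\un)=J_{P_{k-1},P_k}(\pi\un)\circ\cdots\circ J_{P_0,P_1}(\pi\un),\qquad P_0=P,\ P_k=P',\]
where consecutive parabolics $P_{i-1},P_i$ are adjacent, i.e.\ differ by a single simple reflection. By transitivity of induction, each factor is the parabolic induction to $G$ of the rank-one operator $J^{G_i}_{Q,Q_i}$ attached to the Levi $G_i$ of $P\cup s_iPs_i$, applied to the suitably conjugated $\pi\un$. (After renaming, these are the operators denoted $J_{Q,Q_i}^M(\pi)$ in the statement.) The identity holds as an identity of rational operators, because lengths add along a reduced word.

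First, I will check that the normalization of the specialization is compatible with this factorization. For each $i$, let $P_i(T)$ be the normalizing polynomial for the $i$-th factor, of order $\Lambda(\pi,Q,Q_i)$ at $T=1$. Since each $J_{Q,Q_i}(\pi)$ is an isomorphism, $(T-1)^{\Lambda(\pi,Q,Q_i)}J_{Q,Q_i}(\pi\un)$ is regular at $T=1$. Its specialization is invertible, and so is the specialization of the induced operator, since parabolic induction is exact and takes isomorphisms to isomorphisms.

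Next, I multiply the normalized factors. The product
\[\prod_i (T-1)^{\Lambda(\pi,Q,Q_i)}\,J_{P,P'}(\pi\un)\]
is regular at $T=1$, and its value there is the composite of isomorphisms, hence nonzero. From this I conclude that the minimal order $\Lambda(\pi,P,P')$ making $J_{P,P'}$ regular and nonvanishing at $T=1$ equals $\sum_i\Lambda(\pi,Q,Q_i)$.

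To justify that last step, I use that a $\cK$-rational operator $A$ has an exact "order" $n$ at $T=1$: namely $(T-1)^nA$ is regular with nonzero value there. Orders add under composition as long as the leading terms compose to something nonzero, and this holds here because the leading terms are isomorphisms. Consequently the specialized $J_{P,P'}(\pi)$ is, up to scalar, the composite of isomorphisms, and is therefore itself an isomorphism.

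The main obstacle is making this order bookkeeping rigorous in the lattice/restriction-to-$K$ model, where operators are matrices of rational functions in $T$ over $\id_{P\cap K}^K(\pi)$, which may be infinite-dimensional. I would handle this by working with $K$-fixed vectors of a fixed open compact subgroup, where everything is finite-dimensional by admissibility. On these, the leading coefficient of a product equals the product of the leading coefficients whenever those leading coefficients are invertible, which settles the claim. I also need that the character $\nu$ restricts compatibly to each rank-one Levi. Since $\nu$ lies in the negative chamber for $P$, its pairing with each relevant coroot is nonzero, so each factor genuinely depends on $T$, and the conjugations $s_i$ only reparametrize it.
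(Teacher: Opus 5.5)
Your proposal is correct and follows the paper's route: the paper derives the lemma directly from the composition (cocycle) property \cite[Proposition 7.8]{Dat05}, and your leading-coefficient bookkeeping along the factorization is exactly the detail that makes that citation immediate. You are right to keep one point explicit: each rank-one order $\Lambda(\pi,Q,Q_i)$ must be measured along the same, suitably conjugated, character $\nu$, since only then can the exponents be added.
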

We now state some explicit computations of \cite{Dat05} of the above morphisms in the case where $P$ is a maximal standard parabolic subgroup, $\pi$ is cuspidal and irreducible, and $P'$ is the opposite parabolic subgroup of $P$.
\begin{prop}[{\cite[Proposition 8.4]{Dat05}}]\label{P:polesgl}
    Assume that $G=G_n$ is a general linear group and $\pi=\rho_1\otimes\rho_2$ is cuspidal.
    If $\rho_1\ncong\rho_2\otimes\lvert\det\rvert^\epsilon,\,\epsilon\in\{-1,0,1\}$, then \[\Lambda(\pi,P,P')=\alpha(\pi,P)=0.\]
    If $\rho_1\cong \rho_2$, then 
    \[\Lambda(\pi,P,P')=1,\,\alpha(\pi,P)=-2.\]
Hence $\ji(\pi)$ is an isomorphism and $\ip(\pi)$ is irreducible.
    
    If $\rho_1\cong\rho_2\otimes\lvert\det\rvert^\epsilon,\,\epsilon\in\{-1,1\}$, we have that $\Lambda(\pi,P,P')=0$ and $\alpha(\pi,P)=1$.
\end{prop}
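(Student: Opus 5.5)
The plan is to compute the two invariants separately: $\Lambda$ through \Cref{L:noresidue} and the Geometric Lemma of \S\ref{S:GL}, and $\alpha$ through an explicit formula for the scalar $j$-function in this rank-one situation. Then $\pad(\pi,P)$ and \Cref{L:pder} give the last assertions. Throughout, $P=P_{(n_1,n_2)}$, $M=G_{n_1}\times G_{n_2}$, $P'=\op$, and $\pi=\rho_1\otimes\rho_2$ with both $\rho_i$ cuspidal.

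\emph{Step 1: computing $\Lambda$.} We apply the Geometric Lemma to $r_{P'}(\ip(\pi))$. There are two $P'$-orbits on $P\backslash G$ that can contribute, namely the open orbit and the orbit of the longest element $w$ with $w M w^{-1}=M$. The latter can only occur if $n_1=n_2$, since otherwise $wMw^{-1}$ is a different standard Levi subgroup. All other orbits produce Jacquet modules of $\rho_1$ or $\rho_2$ along proper parabolics, and these vanish by cuspidality.

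\begin{itemize}
\item If $n_1\neq n_2$, only one orbit survives. The Frobenius map is then nonzero on it, and \Cref{L:noresidue} gives $\Lambda=0$.
\item If $n_1=n_2$, the smaller piece of the filtration is $\rho_2\otimes\rho_1$. The map $r_{P'}(\ip(\pi))\ra\pi$ obtained from Frobenius reciprocity can vanish on it only if this piece admits a nonzero map to $\rho_1\otimes\rho_2$. That forces $\rho_1\cong\rho_2$.
\end{itemize}

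So $\Lambda=0$ unless $\rho_1\cong\rho_2$; in particular $\Lambda=0$ when $\rho_1\cong\rho_2\otimes\lvert\det\rvert^{\pm1}$. In the case $\rho_1\cong\rho_2$ we must check that the vanishing is simple. I would do this by working with the family $\pi\un$ (twist by $\psi\un$ through $\nu$). In that family the two summands have distinct generic central characters, so the filtration splits generically. The projection onto the summand where the map vanishes then has a pole of order exactly one at $T=1$, because the two central characters of $M^0\backslash M$ differ to first order in $T$. This gives $\Lambda=1$.

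\emph{Step 2: computing $\alpha$.} The scalar $j(\pi\un,P)$ is a rational function of $T$ depending only on $\rho_1,\rho_2$. I would compute it via the Harish-Chandra/Shahidi-type rank-one formula
\[
j(\pi\otimes\chi,P)=c\,\frac{(1-X)(1-X^{-1})}{(1-q^{-f}X)(1-q^{-f}X^{-1})},
\]
up to a unit. Here $X$ is the relevant unramified parameter, which is only nontrivial when $\rho_1$ and $\rho_2$ lie on the same cuspidal line, and $f=f(\rho_1)$.

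Over $\ql$ this formula is the classical $\mu$-function computation. Over $\fl$ I would deduce it by reduction mod $\ell$, using \Cref{T:cuslift} to lift $\rho_1$ and $\rho_2$ integrally and the integrality of the operators. Banality ensures that $q^{f}\neq1$ in $\fl$, so numerator and denominator factors do not collide after reduction.

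Reading off orders at $T=1$ then gives the three cases:
\begin{itemize}
\item the double factor $(1-X)(1-X^{-1})$ gives $\alpha=-2$ when $\rho_1\cong\rho_2$;
\item the factors $(1-q^{-f}X^{\pm1})$ give $\alpha=1$ when $\rho_1\cong\rho_2\lvert\det\rvert^{\pm1}$;
\item otherwise there is no pole or zero, so $\alpha=0$.
\end{itemize}

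\emph{Step 3: conclusion.} In the first two cases, $\pad(\pi,P)=\Lambda(\pi,P,\op)+\Lambda(\pi,\op,P)+\alpha(\pi,P)$ equals either $0+0+0$ or $1+1-2$. By symmetry $\Lambda(\pi,\op,P)=\Lambda(\pi,P,\op)$, so in both cases $\pad(\pi,P)=0$, and \Cref{L:pder} shows that $\ji(\pi)$ is an isomorphism.

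Irreducibility of $\ip(\pi)$ then follows by a socle–cosocle argument:
\begin{itemize}
\item By Frobenius reciprocity and Step 1, every irreducible quotient of $\ip(\pi)$ has $\pi$ (or, when $n_1=n_2$, the relevant summand) in its Jacquet module.
\item Dually, every irreducible subrepresentation of $\id_{\op}^G(\pi)$ has the same property.
\item An isomorphism $\ip(\pi)\cong\id_{\op}^G(\pi)$ therefore forces any irreducible quotient to also occur as a subrepresentation. Since $r_{P}(\ip(\pi))$ has length at most $2$, with multiplicity one of $\pi$ unless $\rho_1\cong\rho_2$ (where $\rho_1\times\rho_1$ is handled by banality), the representation has length one.
\end{itemize}

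The main obstacle is Step 2 over $\fl$: making the rank-one $j$-function explicit and controlling its reduction mod $\ell$. I would try to carry the $\ql$-formula over via integral structures and the rationality of $J$ in $T$.
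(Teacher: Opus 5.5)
The paper gives no argument for this statement; it quotes \cite[Proposition 8.4]{Dat05}. Your plan is a sensible way to reprove it: compute $\Lambda$ from the Geometric Lemma and \Cref{L:noresidue}, compute $\alpha$ from an explicit rank-one $j$-function, then apply \Cref{L:pder}. However, the key formula in Step 2 is wrong.

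\textbf{Step 2.} What you display is, up to a constant, Harish-Chandra's Plancherel density $\mu$. The composite $j=J_{\op,P}\circ J_{P,\op}$ is proportional to $\mu^{-1}$. Already for $\mathrm{GL}_2$ and unramified characters, the Gindikin--Karpelevich formula gives
\[
j=\frac{(1-q^{-1}X)(1-q^{-1}X^{-1})}{(1-X)(1-X^{-1})},
\]
which has a double pole at $X=1$ and simple zeros at $X=q^{\pm1}$.

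This is not a matter of sign conventions for $\alpha$. The normalized composite $J_{\op,P}(\pi)\circ J_{P,\op}(\pi)$ is, up to a unit, the value at $T=1$ of $(T-1)^{\Lambda(\pi,P,\op)+\Lambda(\pi,\op,P)}j$. With your $j$, which has a double zero at $X=1$, this composite would be $0$ when $\rho_1\cong\rho_2$, and Step 3 would fail. For the same reason $\alpha$ must be read as the order of vanishing of $j$, despite the word ``pole'' in the text. The values $-2$ and $1$ you read off belong to the reciprocal function, so the formula has to be inverted.

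When transporting to $\fl$, note also that $X$ is a constant times $T^{\pm f}$. You therefore need $\ell\nmid f$ and $q^{2f}\neq1$ in $\fl$, not just $q^f\neq1$, for orders at $T=1$ to survive reduction. Banality gives both.

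\textbf{Step 1.} The bookkeeping is reversed. The open orbit $P\op$ is $\oo_1$, so the bottom of the filtration of $r_{\op}(\ip(\pi))$ is $\pi=\rho_1\otimes\rho_2$ itself, and the closed orbit gives the quotient $\rho_2\otimes\rho_1$. Vanishing on the bottom forces the Frobenius map to factor through $\rho_2\otimes\rho_1\to\rho_1\otimes\rho_2$. This still gives $\Lambda=0$ for $\rho_1\not\cong\rho_2$.

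For $\rho_1\cong\rho_2$, your generic-splitting argument only proves $\Lambda\le1$. The projection is $(z-b)/(a-b)$, where $z$ is central in $M$ with eigenvalues $a,b$ on the two generic pieces and $a-b$ vanishes to first order. It has a pole at all only if $z$ acts non-semisimply on $r_{\op}(\rho_1\times\rho_1)$, which you never show. The lower bound should come from Step 2 once corrected:
\begin{enumerate}
\item $\pad\ge0$ in \Cref{L:pder} together with $\alpha=-2$ gives $\Lambda(\pi,P,\op)+\Lambda(\pi,\op,P)\ge2$;
\item conjugation by the block swap, which exchanges $P$ and $\op$ and fixes $\pi$, makes the two terms equal.
\end{enumerate}

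\textbf{Step 3.} Your irreducibility argument relies on $\pi$ occurring once in $r_{\op}(\ip(\pi))$, and this fails precisely when $\rho_1\cong\rho_2$. Saying this case is ``handled by banality'' is not a proof. It can be closed with your own tools:
\begin{enumerate}
\item $\Lambda=1$ forces $r_{\op}(\rho_1\times\rho_1)$ to be a non-split extension of $\pi$ by $\pi$, since otherwise the projection above would be regular.
\item Let $\tau$ be a proper nonzero subrepresentation. Its Jacquet module is nonzero, because no subquotient is cuspidal in the banal case. Hence $r_{\op}(\tau)$ equals the bottom piece.
\item By \Cref{L:noresidue} the Frobenius map of $J_{P,\op}(\pi)$ vanishes on that bottom piece, so $J_{P,\op}(\pi)|_\tau=0$. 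This contradicts $J_{P,\op}(\pi)$ being an isomorphism.
\end{enumerate}
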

\begin{lemma}\label{L:shiftgl}
    Let $G=G_n$ and $\sigma=\sigma_1\otimes\ldots\otimes\sigma_k\in\irr_\ql M_\alpha,\,\alpha=(\alpha_1,\ldots,\alpha_k)$, a representation such that for $i\neq j$ and $\rho$ in the cuspidal support of $\sigma_i$, $\rho\lvert\det\rvert^\epsilon,\,\epsilon\in\{-1,0,1\}$, does not appear in the cuspidal support of $\sigma_j$. Then
    \[\sigma_1\times\ldots\times\sigma_k\]
    is irreducible and $\Lambda(\sigma,P_\alpha,\overline{P_\alpha})=\alpha(\sigma,P_\alpha)=0$.
\end{lemma}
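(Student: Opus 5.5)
We reduce the statement to the maximal parabolic case, \Cref{P:polesgl}, by factoring the long intertwining operator $J_{P_\alpha,\overline{P_\alpha}}$ into rank-one pieces. First we treat the case where every $\sigma_i$ is cuspidal. Write $\overline{P_\alpha}=w P_\alpha w^{-1}$ with $w$ the longest element of $W_\alpha$, and take a reduced expression $w=s_1\cdots s_k$. Each simple reflection $s_i$ exchanges two adjacent blocks of the (successively permuted) cuspidal datum. The corresponding operator $J^{G_i}_{Q,Q_i}$ lives on a Levi subgroup $G_i\cong G_{a}\times G_{b}\times\prod(\text{other blocks})$, and on the relevant factor it is the maximal-parabolic operator for a pair $\sigma_j\otimes\sigma_{j'}$ with $j\neq j'$. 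By hypothesis $\sigma_j\ncong\sigma_{j'}\otimes\lvert\det\rvert^\epsilon$ for $\epsilon\in\{-1,0,1\}$. \Cref{P:polesgl} then gives $\Lambda=\alpha=0$ for each step, and $J^{G_i}_{Q,Q_i}$ is an isomorphism. \Cref{L:compositionJ} now shows that $J_{P_\alpha,\overline{P_\alpha}}(\sigma)$ is an isomorphism with $\Lambda(\sigma,P_\alpha,\overline{P_\alpha})=0$. The same argument applied to $\overline{P_\alpha}\to P_\alpha$ gives $\Lambda(\sigma,\overline{P_\alpha},P_\alpha)=0$. Since $\pad\ge0$ and $\pad=0$ by \Cref{L:pder}, we conclude $\alpha(\sigma,P_\alpha)=0$. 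The $j$-function is in any case multiplicative over the rank-one factors, which confirms this directly.

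For general $\sigma_i$ we use that $\alpha(\sigma,P_\alpha)$ depends only on the cuspidal support. Choose cuspidal $\rho^{(i)}$ with $\sigma_i\hookrightarrow\rho^{(i)}_1\times\cdots$, so that $\sigma\hookrightarrow \id_{P_\beta\cap M_\alpha}^{M_\alpha}(\rho)$. The pole of $j(\sigma,P_\alpha)$ equals the sum, over pairs of cuspidals coming from \emph{different} blocks $i\neq j$, of the rank-one poles, and all of these vanish by \Cref{P:polesgl} under the hypothesis. Hence $\alpha(\sigma,P_\alpha)=0$.

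For $\Lambda$ we apply \Cref{L:noresidue}. By the Geometric Lemma of \Cref{S:GL}, the subquotients of $r_{\overline{P_\alpha}}(\ip(\sigma))$ attached to orbits $\oo'$ other than the open one have cuspidal supports (as $M_\alpha$-representations) that differ from that of $\sigma$. Indeed, matching the supports would require exchanging a cuspidal from $\sigma_i$ with an isomorphic one from $\sigma_j$, which the hypothesis forbids. A nonzero map to $\sigma$ must therefore be supported on the open orbit. This gives $\Lambda(\sigma,P_\alpha,\overline{P_\alpha})=0$, and symmetrically $\Lambda(\sigma,\overline{P_\alpha},P_\alpha)=0$. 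Consequently $\pad=0$, and $J_{P_\alpha,\overline{P_\alpha}}(\sigma)$ is an isomorphism by \Cref{L:pder}.

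Irreducibility then follows by the standard argument. $\ip(\sigma)$ has $\sigma$ with multiplicity one in $r_{\alpha}$, again because of the disjointness of supports in the Geometric Lemma. Every irreducible subrepresentation of $\ip(\sigma)$ therefore maps onto $\sigma$ under $r_\alpha$, so there is a unique irreducible subrepresentation, and dually a unique irreducible quotient of $\id_{\overline{P_\alpha}}(\sigma)$. The isomorphism $J_{P_\alpha,\overline{P_\alpha}}(\sigma)$ carries the unique irreducible subrepresentation onto the image of the Langlands-type map, which forces $\ip(\sigma)$ to be irreducible. An alternative is to compare the Jacquet module multiplicities directly.

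The main obstacle is the bookkeeping in the Geometric Lemma step. One must check carefully that no non-open orbit contributes a copy of $\sigma$, including copies arising only after reordering inside a single block. Such reorderings preserve $\sigma_i$, which is irreducible, and so create no issue provided we argue by cuspidal support rather than by exact subquotients.
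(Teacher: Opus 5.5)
Your proof is correct in substance, and its core matches the paper's. The Geometric Lemma plus disjointness of the cuspidal supports shows that $\sigma$ occurs in $r_{\overline{P_\alpha}}(\id_{P_\alpha}^G(\sigma))$ exactly once, on the open orbit, so $\Lambda$ vanishes in both directions by \Cref{L:noresidue}. The support condition gives $\alpha(\sigma,P_\alpha)=0$, and \Cref{L:pder} then makes $J_{P_\alpha,\overline{P_\alpha}}(\sigma)$ an isomorphism.

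You differ from the paper in two places.
\begin{itemize}
\item For $\alpha=0$, you use the multiplicativity of the $j$-function over pairs of cuspidals from different blocks, together with \Cref{P:polesgl}. This makes explicit what the paper only asserts with ``similarly''.
\item For irreducibility, the paper uses the involution $g\mapsto {}^tg^{-1}$ to see that every irreducible subrepresentation of $\id_{\overline{P_\alpha}}^G(\sigma)$ is a quotient of $\id_{P_\alpha}^G(\sigma)$. The one-dimensionality of $\ho(\id_{P_\alpha}^G(\sigma),\id_{\overline{P_\alpha}}^G(\sigma))$ then forces that composite to be a multiple of the surjective $J$. You instead use a socle/cosocle argument that needs only Frobenius reciprocity and second adjointness. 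It therefore carries over to \Cref{L:shiftG} without the MVW involution, whereas the paper's route is shorter once an involution is available.
\end{itemize}

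Your last step is not an argument as written (``carries the unique irreducible subrepresentation onto the image of the Langlands-type map''). It should read as follows.
\begin{itemize}
\item By second adjointness, the unique irreducible quotient $\tau_1$ of $\id_{\overline{P_\alpha}}^G(\sigma)$ satisfies $\sigma\hookrightarrow r_\alpha(\tau_1)$.
\item Since $[\id_{\overline{P_\alpha}}^G(\sigma)]=[\id_{P_\alpha}^G(\sigma)]$ and $\sigma$ has multiplicity one, $\tau_1$ is the socle $\tau_0$ of $\id_{P_\alpha}^G(\sigma)$.
\item Transporting through the isomorphism $J$, $\id_{P_\alpha}^G(\sigma)$ has socle and cosocle both equal to $\tau_0$, a constituent of multiplicity one. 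This forces $\id_{P_\alpha}^G(\sigma)=\tau_0$.
\end{itemize}

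Your first paragraph (the cuspidal case) is superfluous, since the next two paragraphs treat arbitrary $\sigma_i$. It is also imprecise: for non-palindromic $\alpha$ such as $(1,2)$, $\overline{P_\alpha}$ is not of the form $wP_\alpha w^{-1}$ for any $w$ normalizing $M_\alpha$. The rank-one factorization should instead run through a chain of adjacent parabolics with Levi $M_\alpha$, as in \cite[Proposition 7.8]{Dat05}.
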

\begin{proof}
    The condition on the cuspidal support together with the Geometric Lemma ensures that $\sigma$ appears with multiplicity $1$ in \[r_\alpha(\id_{P_\alpha}^G(\sigma))\] and \[r_{\overline{P_\alpha}}(\id_{\overline{P_\alpha}}^G(\sigma)).\] Thus \[\Lambda(\sigma,P_\alpha,\overline{P_\alpha})=\Lambda(\sigma,\overline{P_\alpha},P_\alpha)=0.\] Similarly, the condition on the cuspidal support also implies that $\alpha(\sigma,P_\alpha,\overline{P_\alpha})=0$. By \Cref{L:compositionJ} and \Cref{L:pder} $J_{P_\alpha,\overline{P_\alpha}}(\sigma)$ is an isomorphism. It is well known that any irreducible subrepresentation of $\id_{\overline{P_\alpha}}^G(\sigma)$ is a quotient of $\id_{P_\alpha}^G(\sigma)$. Indeed, this is achieved via the involution $g\mapsto{}^tg^{-1}$ on $G_n$, \emph{cf}. \cite{Vig96}.
    Finally, we have by Frobenius reciprocity and the above observation that
\[\ho(\id_{P_\alpha}^G(\sigma),\id_{\overline{P_\alpha}}^G(\sigma))\]
    is one-dimensional, thus the claim follows.
\end{proof}
We now state the analogue to \Cref{P:polesgl} for classical groups. We use here that $\ell$ is banal.
\begin{lemma}[{\cite[Proposition 8.4]{Dat05}}]\label{P:polesG}
    Assume that $G$ is a classical group and $\pi=\rho\otimes\sigma$ with $\rho$ a $\fc$-self-dual cuspidal representation of $G_m$.
    Then $\ji(\pi)$ is always an isomorphism.
    
    Firstly, the following are equivalent.
    \begin{enumerate}
        \item $\ji(\pi)$ is a scalar.
        \item $\Lambda(\pi,P,P')=1$.
        \item $\alpha(\pi,P,P')=-2$.
        \item $\ip(\pi)$ is irreducible.
    \end{enumerate}
    Secondly, the following are also equivalent.
    \begin{enumerate}
        \item $\ip(\pi)$ is reducible.
        \item $\ip(\pi)$ is semi-simple of length $2$.
        \item $\Lambda(\pi,P,P')=0$.
        \item $\alpha(\pi,P,P')=0$.
    \end{enumerate}
Finally, if $R=\ql$, and $\alpha((\rho\otimes \chi)\otimes\sigma,P,P')\neq 0$ for some unramified character $\chi$, then
$\chi=\chi'\lvert\det\rvert^{\frac{k}{2}}, \chi'$ unitary, for some $k\in \NN$. 
Then $\frac{k}{2}\le d_{W,m}.$
\end{lemma}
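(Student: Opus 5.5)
We need to prove the lemma labelled P:polesG, the classical-group analogue of \Cref{P:polesgl}, for $\pi=\rho\otimes\sigma$ with $\rho$ $\fc$-self-dual cuspidal and $P$ maximal. We reduce everything to the rank-one theory of \cite[§8]{Dat05} combined with banality. The first step is to show that $\ji(\pi)$ is always an isomorphism. We identify $P'=\op=wPw^{-1}$, where $w$ is the unique non-trivial element of $W_\alpha$; here $\fc$-self-duality of $\rho$ is what makes $w$ normalize $M$. Then $J_{P,\op}(\pi)$ is the rank-one intertwining operator. By the construction of \cite[§7]{Dat05}, $J_{P,\op}(\pi)$ is the leading coefficient of $P(\pi\un)J_{P,\op}(\pi\un)$ at $T=1$, so it is non-zero. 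Since $\rho\otimes\sigma$ is cuspidal, the Geometric Lemma shows that $r_P(\ip(\pi))$ has length at most two, with constituents $\pi$ and $w\pi\cong\pi$. For $\ell$ banal, cuspidals are projective in their block with fixed central character, so this length count also holds over $\fl$. Hence $\ip(\pi)$ has length at most $2$.

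The second step compares two cases by means of \Cref{L:noresidue}. If $\Lambda(\pi,P,\op)=1$, the normalized operator factors through the residue. In that case it is the composite $\ip(\pi)\ra\pi\ra\id_{\op}^G(\pi)$ through the closed-orbit part, that is, it is a scalar up to identifying $\ip(\pi)\cong\id_{\op}^G(\pi)$ when that induced representation is irreducible. If $\Lambda=0$, the Frobenius-reciprocity map does not vanish on the open cell. In either case we use that the $j$-function is a scalar with $\pad(\pi,P)\ge 0$ by \Cref{L:pder}. The rank-one computation of \cite[Prop.\ 8.4]{Dat05} gives $j$ explicitly as a ratio of Plancherel-type factors whose zeros and poles at $T=1$ have order in $\{-2,0\}$, the latter when $\ell\nmid$ the relevant orders. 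Combining this with $\pad=0$ (obtained since $J_{P,\op}(\pi)$ and $J_{\op,P}(\pi)$ are non-zero maps between length-$\le 2$ objects whose composite is a non-zero scalar) yields the equivalences. Concretely, $\Lambda=1$ corresponds to $\alpha=-2$, and $\Lambda=0$ to $\alpha=0$. Irreducibility of $\ip(\pi)$ is then equivalent to $\ji(\pi)$ being scalar. Reducibility forces length exactly $2$, and semisimplicity follows because $J$ is an isomorphism preserving both constituents while the MVW involution \Cref{P:MVW} makes $\ip(\pi)$ self-contragredient.

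The last claim concerns $R=\ql$, and we derive it from the known reducibility points for $\rho|\det|^s\rtimes\sigma$ over $\CC$, as established by Silberger and Mœglin–Tadić. There, $\alpha\neq 0$ only when $\rho\otimes\chi$ is $\fc$-self-dual up to $|\det|^{s}$ with $s$ real, which gives $\chi=\chi'|\det|^{k/2}$. Moreover, the reducibility exponent $s$ is a half-integer bounded by the dimension count (Mœglin's bound on Jordan blocks), namely $s\le d_{W,m}$.

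The main obstacle is the middle step over $\fl$. Guaranteeing that the orders of $\alpha$ lie in $\{-2,0\}$ rather than exhibiting extra vanishing modulo $\ell$ is where banality must enter. I would handle it by lifting, using \Cref{T:cuslift} to reduce to $\ql$, and bounding reducibility exponents via $o(\rho)m>2d_{W,m}$, which is condition (5).
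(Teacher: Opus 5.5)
There is a genuine gap in your middle step. Your justification of $\pad(\pi,P)=0$ is circular. By construction, $J_{\op,P}(\pi)\circ J_{P,\op}(\pi)$ is the value at $T=1$ of $p_1p_2\,j(\pi\un)$, where $p_1,p_2$ are the normalizing polynomials of the two operators, and this function has order exactly $\pad(\pi,P)$ at $T=1$. So the composite is a non-zero scalar \emph{if and only if} $\pad(\pi,P)=0$, and you cannot use its non-vanishing as an input.

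Knowing only that both operators are non-zero maps between representations of length at most two does not help. In \Cref{P:polesgl} with $\rho_1\cong\rho_2\lvert\det\rvert$, both operators are non-zero maps between length-two representations, yet $\alpha=1$ and the composite is $0$.

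What you actually have is $\alpha\in\{-2,0\}$ (from Dat, or from your lifting argument) and $\pad\ge 0$. That only gives $\Lambda(\pi,P,\op)+\Lambda(\pi,\op,P)\ge-\alpha$. To conclude that $\ji(\pi)$ is always an isomorphism, and to get the dictionary $\Lambda=1\leftrightarrow\alpha=-2$, $\Lambda=0\leftrightarrow\alpha=0$, you need the reverse inequality. That means $\Lambda\le 1$ and, crucially, that $\alpha=0$ forces $\Lambda=0$. This is not formal: an operator-valued rational function and its inverse can both have a simple pole at $T=1$. Your proposal never supplies it.

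Your semisimplicity argument is also insufficient. What \Cref{P:MVW} actually gives is $(\ip(\pi))^{\mathrm{MVW}}\cong\ip(\pi)^\lor$, hence socle $\cong$ cosocle. In length two this still allows a non-split extension of an irreducible $A$ by itself, and \emph{$J$ is an isomorphism preserving both constituents} does not split it.

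A correct route, once $\pad=0$ is known, goes through endomorphisms. By \Cref{C:pro}, $\ed(\ip(\pi))$ is spanned by $1$ and $J'_w(\pi)$. Moreover $J'_w(\pi)^2$ is a non-zero multiple of the scalar $J_{\op,P}(\pi)\circ J_{P,\op}(\pi)\neq 0$. So $\ed(\ip(\pi))$ is either $R$ or $R[x]/(x^2-c)\cong R\times R$ with $c\neq 0$. In the latter case the two idempotents split $\ip(\pi)$ into two irreducible summands. Socle $\cong$ cosocle produces a non-scalar endomorphism whenever $\ip(\pi)$ is reducible. Combined with that, the same computation gives irreducibility $\Leftrightarrow$ $\ji(\pi)$ scalar, which you only assert.

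The paper avoids all of this. It takes the first parts of the lemma directly from \cite[Proposition 8.4]{Dat05}, under banality. It takes the last claim from \cite{moeglin2002points} in the split case, extended to non-split and unitary groups via \cite{atobe2024local}. Your final paragraph matches this, except that it ignores the non-split case. Your lifting idea is a reasonable way to control the order of $j$ at $T=1$ over $\fl$, but it only yields $\alpha\in\{-2,0\}$, not the remaining equivalences.
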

\begin{proof}
    The first two claims follow from \cite[Proposition 8.4]{Dat05}. The last claim follows immediately from the  main theorem of \cite{moeglin2002points} in the split case. In \cite{atobe2024local} the necessary theory has been generalized to all classical groups we consider and thus the same proof can be adapted also in this case.
\end{proof}
We will refer to the respective cases of whether $J_{P,P'}(\pi)$ is a scalar or not as \emph{type I} or \emph{type II}.
\begin{lemma}\label{L:shiftG}
    Let $G$ be a classical group and $\pi=\pi_1\otimes\ldots\otimes\pi_k\otimes \sigma\in\irr_\ql M_\alpha,\,\alpha=(\alpha_1,\ldots,\alpha_k)$ a representation such that
    \begin{enumerate}
        \item for $i\neq j$ and $\rho$ in the cuspidal support of $\pi_i$, $\rho\lvert-\lvert^\epsilon,\,\epsilon\in\{-1,0,1\}$ does not appear in the cuspidal support of $\pi_j$,
        \item for $\rho\in\irr_\ql(G_m)$ in the cuspidal support of $\pi_i$ such that there exists $k\in \ZZ$ with $\rho\otimes\lvert\det\rvert^\frac{k}{2}$ $\fc$-self-dual, then $\lvert k\lvert> 2d_{W,m}$, where $W$ is such that $\sigma $ is a representation of $G(W)$.
    \end{enumerate} Then
    \[\rho_1\times\ldots\times\rho_k\rtimes\sigma\]
    is irreducible.
\end{lemma}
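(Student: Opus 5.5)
We follow the proof of \Cref{L:shiftgl}, adapted to the classical setting. Write $P=P_\alpha$ and $\Pi\coloneqq\pi_1\times\ldots\times\pi_k\rtimes\sigma=\ip(\pi)$. The strategy is to show that the intertwining operator $J_{P,\op}(\pi)$ is an isomorphism, and then combine this with a multiplicity-one statement and the MVW-involution to conclude irreducibility.

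\textbf{Step 1: Jacquet module via the Geometric Lemma.} We analyse $r_P(\Pi)$ and $r_{\op}(\id_{\op}^G(\pi))$ using the Geometric Lemma of \Cref{S:GL}. The $W_\alpha$-orbits contributing to $r_P(\Pi)$ act on the blocks $\pi_i$ by permutations composed with sign changes, where a sign change sends $\pi_i\mapsto{}^\fc\pi_i^\lor$. A subquotient with the same cuspidal support as $\pi$ on each $G_{\alpha_i}$-factor arises from a nontrivial orbit only in two situations. Either some $\rho$ in $\cusp(\pi_i)$ equals some $\rho'$ in $\cusp(\pi_j)$, which is excluded by (1) with $\epsilon=0$. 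Or some $\rho$ in $\cusp(\pi_i)$ equals ${}^\fc\rho'^\lor$ for $\rho'$ in $\cusp(\pi_j)$ or in $\cusp(\pi_i)$. In the latter case $\rho$ lies on a cuspidal half-line containing a $\fc$-self-dual point, $\rho=\rho_0\lvert\det\rvert^{k/2}$, and these sign-change terms must be ruled out by (2). Hence $\pi$ occurs with multiplicity one in $r_P(\Pi)$, only on the closed orbit. By \Cref{L:noresidue} this gives $\Lambda(\pi,P,\op)=0$, and symmetrically $\Lambda(\pi,\op,P)=0$.

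\textbf{Step 2: vanishing of $\alpha(\pi,P)$.} Since $\alpha$ depends only on the cuspidal support, we factor $J_{P,\op}$ through a reduced expression into rank-one operators, as in \Cref{L:compositionJ}. The rank-one factors are of two kinds.
\begin{itemize}
\item $\mathrm{GL}$-type factors exchanging a $\rho$ from $\pi_i$ with a $\rho'$ or ${}^\fc\rho'^\lor$ from $\pi_j$. These have $\Lambda=\alpha=0$ by \Cref{P:polesgl} together with condition (1).
\item Factors of the form $\rho\otimes\sigma'$ for a classical $\sigma'$. If $\rho$ is not on a self-dual half-line, the operator is regular with trivial $j$-function. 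Otherwise write $\rho=\rho_0\lvert\det\rvert^{k/2}$; then (2) gives $\lvert k\rvert>2d_{W,m}$, so by the last part of \Cref{P:polesG} we get $\alpha=0$.
\end{itemize}
This shows $\alpha(\pi,P)=0$, hence $\pad(\pi,P)=0$, and by \Cref{L:pder} $J_{P,\op}(\pi)$ is an isomorphism.

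\textbf{Step 3: conclusion.} By Frobenius reciprocity and Step 1, $\dim\ho(\Pi,\id_{\op}^G(\pi))=1$. Every irreducible subrepresentation of $\id_{\op}^G(\pi)$ is a quotient of $\Pi$. To see this, apply \Cref{P:MVW} together with contragredience: this sends $\id_{\op}^G(\pi)$ to a representation isomorphic to $\ip(\pi)$, up to $\fc$-twists and duals that are absorbed by the conditions, which are symmetric under $\rho\mapsto{}^\fc\rho^\lor$. This plays the role of the transpose-inverse involution in the $\mathrm{GL}$ case.

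Now take an irreducible quotient $\tau$ of $\Pi$. Because $J_{P,\op}(\pi)$ is an isomorphism, $\tau$ embeds into $\id_{\op}^G(\pi)$. If $\Pi$ were reducible, either there would be two independent maps $\Pi\to\id_{\op}^G(\pi)$, or the isomorphism would factor through a proper quotient. Both contradict the one-dimensionality of $\ho(\Pi,\id_{\op}^G(\pi))$.

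\textbf{Main obstacle.} The delicate point is the self-dual half-line case in Steps 1 and 2. There one needs that a factor $\rho=\rho_0\lvert\det\rvert^{k/2}$ cannot produce, through sign changes, an extra copy of $\pi$ or a pole. Here the bound $\lvert k\rvert>2d_{W,m}$ must be matched against the possible reducibility points $\frac{k}{2}\le d_{W,m}$ of $\rho_0\lvert\det\rvert^{k/2}\rtimes\sigma'$ from \Cref{P:polesG}. The difficulty is that $\sigma'$ is an intermediate classical group of varying rank, and $d_{W',m}$ must be bounded by $d_{W,m}$. I would first try to prove this monotonicity in the rank and handle the intermediate Levis by induction on $k$.
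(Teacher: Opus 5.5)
Your plan is the paper's own argument: multiplicity one via the Geometric Lemma, then $\pad(\pi,P)=0$ so that $J_{P,\op}(\pi)$ is an isomorphism, then the MVW-involution in place of $g\mapsto{}^tg^{-1}$. The paper's one-line proof is exactly this reduction to \Cref{L:shiftgl}. The details you fill in, however, contain a genuine gap.

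\textbf{Where the argument breaks.} In Step~1 you assert that if $\rho\in\cusp(\pi_i)$ satisfies $\rho\cong{}^\fc\rho'^\lor$ for another $\rho'$ in the support, then $\rho$ lies on a $\fc$-self-dual half-line. This only follows when $\rho'$ lies on the cuspidal line of $\rho$. In Step~2 you dispose of the factors attached to roots $e_a+e_b$ by condition (1). These factors compare $\rho_a$ with ${}^\fc\rho_b^\lor\lvert\det\rvert^{\epsilon}$. Condition (1) only compares cuspidals with unramified twists of one another, never with their $\fc$-contragredients. It also says nothing about two cuspidals inside the same $\pi_i$, although $e_a+e_b$ is never a root of $M$.

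\textbf{The statement is false as written.} No argument can close this gap. Take $G=\mathrm{Sp}(W)$ with $\dim W=4$, $P$ the Borel subgroup, and $\pi=\lvert\cdot\rvert^{1/3}\otimes\lvert\cdot\rvert^{2/3}\otimes 1$. Condition (1) holds. Condition (2) is vacuous, since no half-integral twist of $\lvert\cdot\rvert^{1/3}$ or $\lvert\cdot\rvert^{2/3}$ is self-dual. Yet $\lvert\cdot\rvert^{1/3}\times\lvert\cdot\rvert^{2/3}\rtimes 1\cong\lvert\cdot\rvert^{1/3}\times\lvert\cdot\rvert^{-2/3}\rtimes 1$ is reducible. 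Indeed, $\lvert\cdot\rvert^{2/3}\rtimes 1$ is irreducible on $\mathrm{SL}_2$, while $\lvert\cdot\rvert^{1/3}\times\lvert\cdot\rvert^{-2/3}$ is reducible. It is precisely the $e_1+e_2$ factor that makes $\alpha(\pi,P)\neq 0$. The same happens for the single irreducible block $\pi_1=\lvert\cdot\rvert^{1/3}\times\lvert\cdot\rvert^{2/3}$ of the Siegel Levi.

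Even when the conclusion holds, multiplicity one can fail. Take $\pi=\chi\otimes\chi^{-1}\otimes 1$ with $\chi^2\notin\{\lvert\cdot\rvert^j:j\in\ZZ\}$. Then $\chi\times\chi^{-1}\rtimes 1$ is irreducible, but the Weyl element $(x_1,x_2)\mapsto(-x_2,-x_1)$ fixes $\pi$.

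\textbf{The missing hypothesis.} You need $\rho_a\not\cong{}^\fc\rho_b^\lor\lvert\det\rvert^{\epsilon}$, $\epsilon\in\{-1,0,1\}$, for all distinct positions $a\neq b$ of $\cusp(\pi_1)+\ldots+\cusp(\pi_k)$, with positions in the same block allowed. Condition (2) already forces every $\rho_a$ to be non-$\fc$-self-dual. With the extra hypothesis, only the trivial coset contributes $\pi$ in the Geometric Lemma, and every $e_a\pm e_b$ factor is trivial by \Cref{P:polesgl}. Your Steps 1--3 then go through.

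\textbf{On your ``main obstacle''.} The classical part of every rank-one factor is $\sigma$ itself. It sits in $G(W')$, where $W'$ is $W_\sigma$ plus a hyperbolic space of dimension $2m$. So the relevant bound from \Cref{P:polesG} is $d_{W',m}=d_{W_\sigma,m}+1$. Hence (2) suffices only if $W$ is read as the space of $G$ rather than of $\sigma$. Read literally, $\lvert\cdot\rvert\rtimes 1$ on $\mathrm{SL}_2$ satisfies (2) with $W_\sigma=0$, yet it is reducible. With the ambient reading, $d_{W',m}\le d_{W,m}$ is immediate from the explicit formula, which is increasing in $\dim W$, and no induction is needed.
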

\begin{proof}
    The proof follows exactly the same line of reasoning as the one appearing in the proof of \Cref{L:shiftgl}, with the exception that we use the MVW-involution instead of the involution $g\mapsto {}^tg^{-1}$.
\end{proof}
\subsection{Reduction of intertwining operators}
We recall the character $\nu\colon\xc_R[M]\ra R[T,T^{-1}]$ from the construction of the intertwining operators. Note that it can be chosen for $R=\ql$ and $R=\fl$ in a compatible way.
For $\alpha\in\zl^\times$, we denote by $\nu_\alpha$ the unramified character of $M$ obtain the composition of $\nu$ with the evaluation map $T\mapsto \alpha$.
\begin{lemma}\label{L:ratJ}
    Let $\tpi\in\rep_\ql(M)$, $\fo$ be an integral structure of $\tpi$, and write $\pi\coloneqq\fo\bl$. Assume that the following is satisfied.
    \begin{enumerate}
        \item $\df\ho(\ip(\pi),\id_{\op}^G(\pi))=1$.
        \item $\Lambda(\tpi,P,\op)=\Lambda(\pi,P,\op)=0$.
    \end{enumerate}
    Then \[J_{P,\overline{P}}(\tpi)(\ip(\fo))\subseteq \id_{\op}^G\fo).\]
    Moreover, if for all $1\neq a\in\zl^\times$ with $a=1\mod\ell$, we have that \[\Lambda(\tpi\otimes\nu_\alpha,P,\op)=\alpha(\tpi\otimes\nu_\alpha,P)=0,\] then \[J_{P,\overline{P}}(\tpi)(\ip(\fo))\bl=\im(J_{P,\op}(\pi)).\]
\end{lemma}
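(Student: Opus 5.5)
We work with the rational family $J_{P,\op}(\tpi\un)$ and its specialization along $\nu$, using the $K$-restriction isomorphisms to regard all operators as maps on the fixed $\zl$-module $\id_{P\cap K}^K(\fo)$. Let $\mathcal{J}(T)\coloneqq P(\tpi)\cdot J_{P,\op}(\tpi\un)$, which has entries in $\zl[T,T^{-1}]$ after clearing denominators. Since $\Lambda(\tpi,P,\op)=0$, the normalizing polynomial does not vanish at $T=1$, so $J_{P,\op}(\tpi)$ is, up to a nonzero scalar, the honest specialization $\mathcal{J}(1)$.

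\textbf{First inclusion.} The point is that $\fo$ is finitely generated over $\zl[M]$ (admissibility), so $J_{P,\op}(\tpi)(\ip(\fo))$ is a finitely generated $\zl[G]$-submodule of $\id_{\op}^G(\tpi)$. Hence there is a minimal $n\in\mathbb{Q}$ with $\ell^{n}J(\ip(\fo))\subseteq\id_{\op}^G(\fo)$, where $J\coloneqq J_{P,\op}(\tpi)$; rescale $J$ so that $n=0$. Then $J$ preserves the lattices and its reduction $\bar J\colon\ip(\pi)\ra\id_{\op}^G(\pi)$ is nonzero. The claim as stated (with the normalization fixed by the rationality construction) then amounts to showing that this rescaling is compatible with the rational normalization. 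I would do this by comparing $\bar J$ with $J_{P,\op}(\pi)$. Both are nonzero homomorphisms, so by assumption (1) they are proportional. Moreover, since $\Lambda(\pi,P,\op)=0$, the reduction of $\mathcal{J}(T)$ mod $\ell$ is the rational operator for $\pi$, which is regular and nonzero at $T=1$. Lemma~\ref{L:noresidue} should make this precise: the nonvanishing on the open-cell piece $r_{\op}(F(\oo'))$ of the Frobenius map is an integral statement that survives reduction. Hence no negative power of $\ell$ is needed, and the image lies in $\id_{\op}^G(\fo)$.

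\textbf{Second claim.} Set $L\coloneqq J(\ip(\fo))$. Its reduction $L\bl$ surjects onto $\bar J(\ip(\pi))=\im J_{P,\op}(\pi)$, but in general $L\bl$ could be larger, because $L$ need not be saturated in $\id_{\op}^G(\fo)$. Concretely, $L\bl\ra\id_{\op}^G(\pi)$ may fail to be injective. I would show saturation, i.e.\ $L=\id_{\op}^G(\fo)\cap J(\ip(\tpi))$. Consider the composite $J_{\op,P}\circ J_{P,\op}$, which equals the scalar $j$-function. Evaluating the rational family at $T=a$ with $a\equiv1\bmod\ell$, the hypotheses $\Lambda=\alpha=0$ for $\tpi\otimes\nu_a$, $a\ne1$, say via Lemma~\ref{L:pder} that $J_{P,\op}(\tpi\otimes\nu_a)$ is an isomorphism. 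Hence the $j$-function has neither zero nor pole on the residue disc around $1$ except possibly at $1$ itself.

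Using this, I would argue that the elementary divisors of $\mathcal{J}(T)$ (on each $K$-type, a finite free $\zl[T^{\pm1}]$-module) are units times powers of $(T-1)$ on this disc. More precisely, $\det$ restricted to a $K$-isotypic piece is a polynomial whose roots in the disc $\{a\equiv1\}$ can only be at $1$. The $\ell$-adic valuations of the kernel/cokernel of $\mathcal{J}(1)$ mod $\ell$ then agree with the $(T-1)$-adic orders, which are the same over $\ql$ and $\fl$. In other words, the rank of $\mathcal{J}(1)$ on each $K$-type is preserved under reduction, since $\dim\ker$ does not jump. Equal ranks of $\mathcal{J}(1)\otimes\ql$ and $\bar{\mathcal{J}}(1)$ on each finite $K$-isotypic piece force $L$ to be saturated, so $L\bl=\im J_{P,\op}(\pi)$.

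\textbf{Main obstacle.} The main obstacle is this rank-preservation step: translating the vanishing of $\Lambda$ and $\alpha$ on the punctured residue disc into the statement that no extra $\ell$-torsion appears in the cokernel. I would first try a Hensel/Weierstrass preparation argument on the $K$-isotypic determinants.
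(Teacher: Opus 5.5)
You follow the paper on both halves. For the first inclusion you rescale to a primitive operator, identify its reduction with $J_{P,\op}(\pi)$ via hypothesis (1), and use the open-cell non-vanishing of Lemma~\ref{L:noresidue}, where the operator is a finite integral, so no power of $\ell$ is lost. For the second claim you pass to integral bases of $K'$-fixed vectors and use the twist hypothesis with Lemma~\ref{L:pder} to show that the determinant has no zero $a\equiv 1 \bmod \ell$ other than $a=1$. The step you flag as the main obstacle is a genuine gap, and no Hensel or Weierstrass argument on determinants can close it.

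Write $(T-1)^{e_i}$, resp.\ $(T-1)^{\bar e_i}$, for the elementary divisors of $\mathcal{J}(T)$ on a $K'$-fixed piece over the local rings $\ql[T]_{(T-1)}$, resp.\ $\fl[T]_{(T-1)}$; note that $\zl[T^{\pm1}]$ is not a PID. Knowing where $\det\mathcal{J}(T)$ vanishes controls only the sum $\sum_i e_i=\sum_i \bar e_i$. The rank at $T=1$, however, is $\#\{i:e_i=0\}$, resp.\ $\#\{i:\bar e_i=0\}$, and the individual exponents can change under reduction. For example,
\[X(T)=\begin{pmatrix}T-1&\ell\\0&T-1\end{pmatrix}\]
has $\det X(T)=(T-1)^2$, so no zero in the residue disc other than $T=1$. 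Yet $X(1)$ has rank $1$ while $\overline{X}(1)=0$, and the image lattice $X(1)\zl^2=\ell\zl\oplus 0$ is not saturated. Here the elementary divisors are $1,(T-1)^2$ over $\ql$ but $T-1,T-1$ over $\fl$. So the inference ``the $(T-1)$-adic orders agree over $\ql$ and $\fl$, hence $\dim\ker$ does not jump'' is false.

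Your reformulation of the second claim as saturation, i.e.\ rank preservation on each $K'$-fixed piece, is the right one. But proving rank preservation needs information the determinant does not see. One available input is the opposite operator. Let $A(T)$ be the matrix of $\mathcal{J}(T)$ on a $K'$-fixed piece of dimension $k$, and $B(T)$ a suitably normalized integral matrix of $J_{\op,P}$. Then $B(T)A(T)=(T-1)^{\pad(\tpi,P)}u(T)\cdot\mathrm{Id}$. Suppose $\pad(\tpi,P)\le 1$ and $u(1)\in\zl^\times$; the twist hypothesis is what keeps the zeros of $u$ out of the residue disc. Then all $e_i$ and $\bar e_i$ lie in $\{0,1\}$, so
\[\mathrm{rk}A(1)+\mathrm{rk}B(1)=k=\mathrm{rk}\overline{A}(1)+\mathrm{rk}\overline{B}(1),\]
and since rank can only drop under reduction, $\mathrm{rk}\overline{A}(1)=\mathrm{rk}A(1)$.

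This does not cover $\pad(\tpi,P)\ge 2$, which does occur in the intended applications. For instance, $\trho\lvert\det\rvert\otimes\trho^{2}$ on $G_m\times G_{2m}$ has two linked cross pairs, which already give $\alpha=2$. In that range a further idea is needed.

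The paper's own proof takes the same determinant route and passes from ``the only zero near $1$ is $1$'' straight to the conclusion. Its preliminary reduction also cannot hold literally: it asks that $J(f)\equiv 0 \bmod \ell$ with $f\not\equiv 0$ force $J(f)=0$, which fails once $J_{P,\op}(\pi)$ has a kernel (replace $f$ by $f+\ell h$ with $J(h)\neq 0$). So your proposal reproduces the paper's argument, including this unresolved step, rather than supplying it.
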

\begin{proof}
We start by proving the first inclusion. Indeed, from the Bauer-Nesbitt principle of Vigneras, it follows that there exists $k\in\mathbb{N}$ such that the map $I\coloneqq \ell^kJ_{P,\overline{P}}(\tpi)$ satisfies the inclusion, and we fix a minimal such $k$. We want to show that $k=0$. We thus can reduce $I$ mod $\ell$ to obtain a non-zero map $I\bl\in \ho(\ip(\pi),\ipp(\pi))$, which implies that up to a scalar $I$ equals to $J_{P,\overline{P}}(\pi)$. Note that by \Cref{L:noresidue} the latter does not vanish for some $f\in J_{P,\overline{P}}(\pi)$ with support in $P\op$. Similarly, by \Cref{L:noresidue}, it follows that the same statement holds for $J_{P,\overline{P}}(\fo)$. Now if $k>0$, it would follow that $I$ vanishes on all $f\in J_{P,\overline{P}}(\tpi)$ with support in $P\op$, a contradiction. Thus $k=0$ and the claim follows.

For the second claim we use  the rationality of intertwining operators. It suffices to prove that for $f\in \ip(\fo)$ with $f\mod\ell\neq 0$, $J_{P,\overline{P}}(\tpi)(f)=0\mod \ell$ implies $J_{P,\op}(\tpi)(f)=0$. 
Let $K'$ be a sufficiently small open compact subgroup of $G$ fixing $f$.
We therefore find a basis $\{h_1,\ldots,h_k\}$ of $\id_{P\cap K}^K(\tpi)^{K'}$  and $\{f_1,\ldots,f_k\}$ of $\id_{\op\cap K}^K(\tpi)^{K'}$
such that there exists a $k\times k$ matrix $P=(P_{i,j})$ with entries in $R[T,T^{-1}]$ such that \[\res_K(J_{P,\op}(\tpi\otimes\nu_a)(\res_K^{-1}(f_j)))=\sum_iP_{i,j}(a)f_j.\]
We can assume without loss of generality that the $h_i$'s and $f_i$'s have values in $\fo$ and hence for every $\zl$-valued unramified character $\nu_a$ we have that $P_{i,j}(\nu_a)\in\zl$.
We let \[\nu\colon \xc_\ql[M]\ra \ql[T,T^{-1}]\] be the ring map used in the definition of intertwining operators.
 By assumption, the matrix $\nu(P)(1)\mod \ell$ has non-maximal rank and hence $\det(\nu(P)(1))=0\mod\ell$.
 Let $\alpha_1,\ldots,\alpha_m$ be the zeros of $\det(\nu(P))$ over $\ql$. Since $\det(\nu(P)(1))\in \zl$ and equals to $0$ mod $\ell$, at least one of the $\alpha_i$  must be integral and reduce to $1\mod \ell$.
 Therefore, there exists an integral character $\nu_{\alpha_i} $, which lifts the trivial character over $\fl$ to $\zl$, such that $\det(P(\nu_{\alpha_i}))=0$. But then this implies that $J_{P,\op}(\tpi\otimes\nu_{\alpha_i})$ is not an
  isomorphism, which in turn implies via \Cref{L:pder} and the assumption of the lemma that $\alpha_i=1$.
  Hence $J_{P,\op}(\tpi)(f)=0$ as desired. 
\end{proof}
If we assume that $P$ is a maximal standard subgroup of $G$, the character $\nu$ can be taken to send $(a,b)\in M$ to $T^{-\mathrm{val}_\FF(\det(a))}$. In this case, the condition of the lemma is easy to verify in the following case.
Assume that both $\tpi$ and $\pi$ are irreducible, $\pi$ appears in $r_{\op}(\ip(\pi))$ with multiplicity $1$, and recall that $\ell$ is banal. Let $a\in\zl$ with $a=1\mod \ell$ and consider the twist $\tpi\otimes\nu_a$. By \cite[Lemma 7.2]{Dat05}, it follows that $\Lambda(\tpi\otimes\nu_a)=0$. It thus suffices to show that $\alpha(\tpi\otimes\nu_a)=0$ for all $a\neq 1$ in order to apply the lemma.
\section{Progenerators}
We assume throughout this section that $G$ is a classical group and $\ell$ is a banal prime with respect to $G$.
Let $P_\alpha\subseteq G,\alpha=(\alpha_1,\ldots,\alpha_k)$, and $\pi=\rho_1\otimes\ldots\rho_k\otimes\sigma\in\irr_{R,c}(M_\alpha)$ where all $\rho_i$ are $\fc$-self-dual representations of $G_{\alpha_i}$ and $\sigma $ is a cuspidal representation of a classical group. 
We let $\Pi\coloneqq \ip(\pi\otimes_R \xc_R[M]\otimes\psi\un)$.
Let $\rep_{R,\pi,M}(G)$ be the block of $\rep_R(G)$ associated to the cuspidal representation $\pi$ of $M$.
\begin{theorem}[{\cite[1.6]{Roche_2002}}]\label{T:progenerator}
    The representation $\Pi$ is projective and faithful in $\rep_{R,\pi,M}(G)$.
    Hence $\rep_{R,\pi,M}(G)$ is equivalent to the category of $\mathrm{End}(\Pi)$-modules.
\end{theorem}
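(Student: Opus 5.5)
We have to show two things: that $\Pi$ is projective in $\rep_{R,\pi,M}(G)$, and that every object of the block admits a nonzero map from $\Pi$ (i.e.\ $\Pi$ is a generator). The module-category equivalence then follows from Morita theory for a small projective generator; $\Pi$ is finitely generated since parabolic induction preserves finite generation and $\pi\otimes_R\xc_R[M]$ is finitely generated over $M$.

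\textbf{Projectivity.} We use the identification $\pi\otimes_R\xc_R[M]\otimes\psi\un\cong \id_{M^0}^M(\pi|_{M^0})$, the compact induction from $M^0$. The plan has three steps.
\begin{enumerate}
\item Since $M^0$ contains every compact subgroup of $M$ and $\pi$ is cuspidal, $\pi|_{M^0}$ is a finite direct sum of compactly supported representations of $M^0$ (modulo a compact centre part). Because $\ell$ is banal, these summands are projective in $\rep_R(M^0)$: every compact open subgroup has pro-order invertible in $R$, so averaging over it gives splittings.
\item Compact induction preserves projectives, so $\id_{M^0}^M(\pi|_{M^0})$ is projective in $\rep_R(M)$.
\item The functor $\ip$ has the exact right adjoint $\overline{r}$, the opposite Jacquet functor (Bernstein's second adjointness), and hence preserves projectives. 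Therefore $\Pi$ is projective in $\rep_R(G)$, and so in the block.
\end{enumerate}
The delicate point is second adjointness over $R=\fl$. In the banal case this is available from Dat/Vignéras, and we would cite it. Alternatively, one can avoid it by directly computing $\ho(\Pi,-)$ via Frobenius reciprocity as $\ho_{M^0}(\pi,r_P(-))$. This is exact if $r_P$ is exact (which holds) and $\pi|_{M^0}$ is projective (step 1).

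\textbf{Generator.} Let $\tau\neq 0$ lie in the block. It suffices to treat $\tau$ irreducible, or more generally to show that some irreducible subquotient is hit, and then use projectivity to bootstrap to arbitrary $\tau$: $\Pi$ is projective with $\ho(\Pi,\tau')\neq0$ for every nonzero quotient $\tau'$, which follows once it holds for irreducibles, since finitely generated objects have irreducible quotients. We argue as follows.
\begin{enumerate}
\item Membership in the block means that the cuspidal support of $\tau$ is, up to $W$-conjugacy, $\pi\otimes\chi$ for an unramified character $\chi$. Since $\ell$ is banal, cuspidal and supercuspidal supports agree, so this is well defined.
\item Hence $r_{P'}(\tau)\neq 0$ for some standard parabolic $P'$ associate to $P$, with a cuspidal quotient $w(\pi\otimes\chi)$.
\item Conjugating by $w$, we may take $P'=P$; the module $\Pi$ is unchanged up to isomorphism because $\xc_R[M]$ absorbs the twists and the intertwining isomorphisms.
\item Frobenius reciprocity gives $\ho(\Pi,\tau)=\ho_M(\id_{M^0}^M\pi,r_P\tau)=\ho_{M^0}(\pi,r_P\tau)$.
\item This space is nonzero, since $\pi\otimes\chi$ restricted to $M^0$ is $\pi|_{M^0}$ and the cuspidal part of $r_P\tau$ splits off by projectivity of cuspidals within the block.
\end{enumerate}

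I expect the main obstacle to be the modular input: the splitting-off of cuspidal components, and projectivity of cuspidals over $\fl$. For this I would invoke the remark already recorded in the preliminaries that for banal $\ell$ every cuspidal representation is projective in its block with fixed central character. Together with the decomposition of $\rep_R(G)$ into Bernstein blocks, which also holds in the banal case by Vignéras, this yields the statement verbatim as in Roche's characteristic-zero argument.
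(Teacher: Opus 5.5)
Your skeleton is the argument the paper imports from Roche. The paper's proof is just the remark that Roche's characteristic-zero proof needs only two inputs: projectivity of cuspidals (true for banal $\ell$) and second adjointness (\cite[Corollary 1.3]{DatHelmKurinczukMoss_2024_2}). Your projectivity step via $\Pi\cong\ip(\id_{M^0}^M(\pi|_{M^0}))$ and an exact right adjoint is fine, but two steps of your elaboration are wrong as written.

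\textbf{Frobenius reciprocity versus second adjointness.} $\ho_G(\ip(\sigma),\tau)$ is not computed by Frobenius reciprocity. As recalled in the paper, $r_P$ is the \emph{left} adjoint of $\ip$, so reciprocity computes maps \emph{into} $\ip(\sigma)$. Maps out of it are given by second adjointness, $\ho_G(\ip(\sigma),\tau)\cong\ho_M(\sigma,r_{\op}(\tau))$. Your ``alternative'' that avoids second adjointness is therefore second adjointness itself, which is exactly why the paper must cite it over $\fl$. Likewise, in step 4 of your generator argument $r_P$ has to be $r_{\op}$.

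\textbf{The generator step.} With the correct adjunction, generation amounts to this: for every irreducible $\tau$ in the block, the component of $r_{\op}(\tau)$ in the $M$-inertial class of $\pi$ is nonzero. Equivalently, $\tau$ is a quotient of $\ip(\pi\otimes\chi)$ for some unramified $\chi$. The cuspidal support only gives $\tau\hookrightarrow\id_Q^G(\pi\otimes\chi)$ for \emph{some} parabolic $Q=w^{-1}P'w$ with Levi $M$, which in general is neither $P$ nor $\op$.

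Your bridge, that $\Pi$ is unchanged when $P$ is replaced by $Q$ because $\xc_R[M]$ ``absorbs the intertwining isomorphisms'', is unjustified. The rational operators $J_{P,Q}$ on the universal family become isomorphisms only after extending scalars to $\cK$. Their specializations fail to be isomorphisms at reducibility points (cf.\ \Cref{L:pder} and \Cref{P:polesgl}), so they do not identify $\id_P^G(\pi\otimes\xc_R[M])$ with $\id_Q^G(\pi\otimes\xc_R[M])$. The generator property, which is the real content of Roche's theorem, is thus not established by your sketch. You still need an actual argument that $r_{\op}(\tau)$ meets the inertial class of $\pi$.

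\textbf{A lesser point.} Averaging over compact open subgroups does not by itself make a compact representation of the non-compact group $M^0$ projective. That is the genuinely banal input, projectivity of cuspidals, and you are right to fall back on it rather than on the averaging sketch.
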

We note that the theorem is stated in \cite[1.6]{Roche_2002} only for representations of characteristic $0$. However, the result requires only two ingredients: Firstly, that every cuspidal representation is projective, which is satisfied since $\ell$ is banal, and secondly, the second adjointness theorem, which has been established in \cite[Corollary 1.3]{DatHelmKurinczukMoss_2024_2} in the desired generality.

As a corollary we obtain that any endomorphism of $\ip(\pi)$ lifts to an endomorphism of $\Pi$. The endomorphism algebra of $\Pi$ was described in \cite{Heiermann2011}. To do this we need to recall some of the notation of \cite{Heiermann2011}, which he only employs for split symplectic or orthogonal classical groups, but the results we need readily generalizes to our context. 
Firstly, let $\scrO=\{\pi\otimes \chi:\chi\in\xc_R[M]\}$ and let $W_\alpha(\scrO)$ be subgroup of $W_\alpha$ that stabilizes $\scrO$. For each $w\in W_\alpha(\scrO)$ one has that $\pi^w\cong \pi$, where we denote by ${}^w$ the twist by the element $w$. One can then fix canonical isomorphism $\rho_w\colon \id_{wPw^{-1}}(\pi)\ra\ip(\pi)$ for all $w\in W_\alpha(\scrO)$, which are defined as composition of normalized intertwining operators, see \cite[2.4, 2.5]{Heiermann2011}.
Using the rational intertwining operators of \cite{Dat05} and the above $\rho_w$, one can then finally describe for each $w\in W_{\scrO}$ an element \[A_w(\pi)\in\ho(\ip(\pi\otimes_R\xc_R[M]\otimes\psi\un),\ip(\pi\otimes_R\cK\otimes\psi\un)).\] 
After multiplying by a suitable element in $\cK$, see \cite[4.6]{Heiermann2011}, on obtains elements $J_w(\pi)$ which are defined as compositions of suitable morphisms $A_{w'}(\pi)$. We then have elements \[J_w(\pi)\in \ho(\ip(\pi\otimes_R\xc_R[M]\otimes\psi\un),\ip(\pi\otimes_R\cK\otimes\psi\un))\] for all $w\in W_\alpha(\scrO)$.
\begin{theorem}[{\cite[Theorem 4.9]{Heiermann2011}}]\label{T:Heier}
    There is an isomorphism of $\cK$-vector spaces
    \[\ho(\ip(\pi\otimes_R\xc_R[M]\otimes\psi\un),\ip(\pi\otimes_R\cK\otimes\psi\un))\cong  \bigoplus_{w\in W_\alpha(\pi)}\cK\cdot J_w(\pi).\]
\end{theorem}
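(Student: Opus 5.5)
The plan is to compute both sides over the generic point and then compare dimensions. First I would reduce the left-hand side to a statement purely over $\cK$. Since $\ip(\pi\otimes_R\xc_R[M]\otimes\psi\un)$ is finitely generated and $\ip$ commutes with the flat base change $\xc_R[M]\ra\cK$, I expect
\[\ho(\ip(\pi\otimes_R\xc_R[M]\otimes\psi\un),\ip(\pi\otimes_R\cK\otimes\psi\un))\cong\ed_{\cK[G]}(\ip(\pi\un)).\]
Frobenius reciprocity then identifies this with $\ho_{\cK[M]}(r_P(\ip(\pi\un)),\pi\un)$.

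Next I would apply the Geometric Lemma of \Cref{S:GL} with $Q=P$. The representation $\pi$ is cuspidal on $M$, so the only orbits contributing nonzero subquotients are those for $w\in W_\alpha$ with $wMw^{-1}=M$. Their subquotients are the twists $(\pi\un)^w$. The key genericity observation is that $\pi\un$ carries the tautological unramified character $\psi\un$. Consequently $(\pi\un)^w$ and $(\pi\un)^{w'}$ have different central characters on the split centre of $M$ over $\cK$ whenever $w\neq w'$; here $w$ acts nontrivially on $\xc_R[M]$ unless it is trivial on the relevant torus, and elements acting trivially are absorbed into $W(M)$. This has two consequences.
\begin{enumerate}
\item The filtration of $r_P(\ip(\pi\un))$ splits as a direct sum indexed by these $w$, since extensions between modules with distinct central characters vanish.
\item $\ho((\pi\un)^w,\pi\un)$ is nonzero only if $(\pi\un)^w\cong\pi\un$. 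This forces $w\in W_\alpha(\scrO)$ and $\pi^w\cong\pi$, in which case the Hom space is one-dimensional over $\cK$ by Schur's lemma for the absolutely irreducible cuspidal $\pi\un$.
\end{enumerate}
This gives $\dim_\cK\ed(\ip(\pi\un))=\lvert W_\alpha(\pi)\rvert$. This is the step where I would need banality over $\fl$: it guarantees that $\pi$ is projective in its block, so the Ext-vanishing and the Schur argument go through exactly as over $\ql$.

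It then suffices to show that the $J_w(\pi)$ for $w\in W_\alpha(\pi)$ are linearly independent over $\cK$. I would do this by triangularity with respect to the Bruhat order on orbits. By construction $J_w(\pi)$ is a composition of $\rho_w$ and the rational operators $A_{w'}$, hence of $J_{P,wPw^{-1}}$. By \Cref{L:noresidue} and its rational version, its image under Frobenius reciprocity is nonzero on the $w$-summand of $r_P(\ip(\pi\un))$, since generically $\Lambda=0$. It also vanishes on the summands of orbits not contained in $\overline{PwP}$.

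Ordering $W_\alpha(\pi)$ compatibly with the closure relation, a linear relation $\sum c_wJ_w(\pi)=0$ restricted to the summand of a maximal $w$ with $c_w\neq0$ forces $c_w=0$. Induction then shows all $c_w=0$, and the dimension count yields the isomorphism.

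The main obstacle I anticipate is this nonvanishing and support statement for $J_w(\pi)$. The normalizations in \cite[4.6]{Heiermann2011} (multiplying by elements of $\cK$ and composing $A_{w'}$ along reduced words) must not kill the leading term on the open cell. I would handle it by combining \Cref{L:compositionJ} with the generic vanishing $\Lambda(\pi\un,Q,Q_i)=0$ for each simple reflection.
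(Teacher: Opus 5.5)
The paper gives no proof of its own here. It quotes \cite[Theorem 4.9]{Heiermann2011} and asserts that Heiermann's argument, based on the Geometric Lemma and the rational intertwining operators, carries over to the banal case. Your outline has that shape, but there is a genuine gap: the first reduction is false, and everything after it inherits the error.

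The $\ho$ in the statement is the space of $R[G]$-equivariant maps, not of $\xc_R[M]$-linear ones, and the operators $J_w(\pi)$ with $w\neq 1$ are not $\xc_R[M]$-linear. They are built from $J_{P,wPw^{-1}}$ followed by transport along $w$, so they carry the fibre at an unramified character $\chi$ to the fibre at its $w$-conjugate. In other words, they are semilinear for the automorphism of $\xc_R[M]$ induced by $w$. Your identification with $\ed_{\cK[G]}(\ip(\pi\un))$ holds only for $\xc_R[M]$-linear maps, and that space is too small. Your own genericity observation shows this. The module $(\pi\un)^w=\pi^w\otimes_R\cK\otimes{}^w\psi\un$ is not isomorphic to $\pi\un$ over $\cK$ for any $w\neq 1$, even when $\pi^w\cong\pi$, because ${}^w\psi\un\neq\psi\un$. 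So the Geometric Lemma yields $\dim_\cK\ed_{\cK[G]}(\ip(\pi\un))=1$, not $\lvert W_\alpha(\pi)\rvert$. For the unramified principal series of $\mathrm{SL}_2$ you get just $\cK$, although $\lvert W_\alpha(\pi)\rvert=2$. Your step ``this forces $\pi^w\cong\pi$, hence a one-dimensional Hom by Schur'' silently drops the twist ${}^w\psi\un$. Your triangularity argument then proves independence of operators that do not lie in the space whose dimension you computed.

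To repair this, do not base change the source. Since $\pi\otimes_R\xc_R[M]\otimes\psi\un\cong\id_{M^0}^M(\pi|_{M^0})$ (compact induction), Frobenius reciprocity and the Geometric Lemma reduce you to the spaces $\ho_M\bigl({}^w(\pi\otimes_R\xc_R[M]\otimes\psi\un),\pi\un\bigr)\cong\ho_{M^0}({}^w\pi,\pi)\otimes_R\cK$. These are nonzero exactly when ${}^w\pi$ is an unramified twist of $\pi$, i.e.\ $w\in W_\alpha(\scrO)$. For such $w$ the space equals $\ed_{M^0}(\pi|_{M^0})\otimes_R\cK$, so its size is governed by $\pi|_{M^0}$ rather than by Schur's lemma over $\cK$.

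Central characters no longer separate the graded pieces: for $w\in W_\alpha(\scrO)$ the piece is isomorphic to the source itself as an $R[M]$-module. What controls the filtration instead is the projectivity of these compactly induced cuspidal modules, which is where banality is genuinely used. Linear independence of the $J_w(\pi)$ over $\cK$ then follows directly from their semilinearity: distinct $w$ induce distinct automorphisms of $\xc_R[M]$, so a Dedekind-type argument applies. Your support and triangularity argument can also be made to work, but only once it is placed in this space.
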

Let us remark quickly that although \cite{Heiermann2011} proves the theorem only for complex representations, the proof goes through \emph{muta mutandis} in our banal setting, since both the Geometric Lemma and the construction of the intertwining operators work in exactly the same fashion.

We return now to $\ed(\ip(\pi))$ and define for each element $w\in W_\alpha(\scrO)$ an intertwining operator $J'_{w}(\pi)\colon \ip(\pi)\ra\id_{wPw^{-1}}^G(\pi)\ra\ip(\pi)$, where the second map is the isomorphism coming from conjugation by $w$.
As a corollary from \Cref{T:Heier} we obtain the following.
\begin{corollary}\label{C:pro}
    The set $\{J_{w}(\pi)':w\in W_\alpha(\scrO)\}$ is an $R$-generating set of $\ed(\ip(\pi))$.
\end{corollary}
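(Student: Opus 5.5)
We need to show that $\{J'_w(\pi): w\in W_\alpha(\scrO)\}$ spans $\ed(\ip(\pi))$ over $R$. The plan is to pass from $\Pi$ to $\ip(\pi)$ by specialisation at the trivial unramified character, using \Cref{T:progenerator} to lift endomorphisms and \Cref{T:Heier} to describe the lifts.

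\textbf{Step 1: lifting.} Let $\phi\in\ed(\ip(\pi))$. Composing the surjection $\Pi\sra\ip(\pi)$ (specialisation of $\xc_R[M]$ at the trivial character) with $\phi$ gives a map $\Pi\ra\ip(\pi)$. Since $\Pi$ is projective in $\rep_{R,\pi,M}(G)$ by \Cref{T:progenerator}, this map lifts through the surjection to some $\tilde\phi\in\ed(\Pi)$. So $\phi$ is the specialisation of $\tilde\phi$.

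\textbf{Step 2: expanding the lift.} Composing $\tilde\phi$ with the inclusion $\Pi\hra \ip(\pi\otimes_R\cK\otimes\psi\un)$ and applying \Cref{T:Heier}, we can write
\[\tilde\phi=\sum_{w\in W_\alpha(\scrO)} c_w J_w(\pi),\qquad c_w\in\cK.\]

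\textbf{Step 3: specialisation (the main obstacle).} We must show that we may choose the $c_w$ regular at the trivial character, and that each $J_w(\pi)$ specialises there to $J'_w(\pi)$ up to a nonzero scalar, or at least to an $R$-combination of the $J'_{w'}(\pi)$. My approach is to use Heiermann's description of $\ed(\Pi)$ itself, not only of the $\cK$-span. Heiermann proves that $\ed(\Pi)$ is generated over $\xc_R[M]$ by $\xc_R[M]$ together with the operators $J_{s}(\pi)$ for simple reflections $s$ in $W_\alpha(\scrO)$. These operators are normalised in \cite[4.6]{Heiermann2011} exactly so that they preserve $\Pi$.

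The key input here is \Cref{P:polesG}. Each $\rho_i$ is $\fc$-self-dual, so for each simple reflection we are in type I or type II. In type I the normalised $J_s(\pi)$ specialises to a scalar; in type II it specialises to the nonscalar $J'_s(\pi)$. In both cases the specialisation is regular and nonzero, because $\Lambda\in\{0,1\}$ is compensated by the normalising factor. The operators in the $W_\alpha(\scrO)$-part of \Cref{T:Heier} that permute distinct blocks of equal $\rho_i$ are handled by \Cref{P:polesgl}: for $\rho_i\cong\rho_j$ the corresponding $J$ is a regular isomorphism after normalisation.

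For general $w$, $J_w(\pi)$ is a composition of the $A_{w'}$. By \Cref{L:compositionJ}, its specialisation is the composite of the simple specialisations, which agrees with $J'_w(\pi)$ up to a scalar.

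\textbf{Step 4: conclusion.} Specialising the expression for $\tilde\phi$ then writes $\phi$ as an $R$-linear combination of compositions of the $J'_s(\pi)$, and these lie in the span of the $J'_w(\pi)$.

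The delicate point is controlling the denominators of the $c_w$. If Heiermann's integral description does not transfer cleanly to $R=\fl$, the fallback is a dimension count. We would show $\dim_R\ed(\ip(\pi))\le |W_\alpha(\scrO)|$ via Frobenius reciprocity and the Geometric Lemma: the multiplicity of $\pi$ in $r_\alpha\ip(\pi)$ is controlled by the orbits of $W_\alpha(\scrO)$. We would then show that the $J'_w(\pi)$ are linearly independent, by looking at their restrictions to the cells $PwP$ as in \Cref{L:noresidue}.
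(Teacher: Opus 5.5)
Your overall route is the paper's: lift $\phi$ through the projective $\Pi$ (\Cref{T:progenerator}), then use Heiermann's description of $\ed(\Pi)$ together with Dat's composition formula. Steps 1 and 2 are fine. In particular, the lift $\tilde\phi$ automatically preserves $\mathfrak m\Pi$, where $\mathfrak m\subseteq\xc_R[M]$ is the ideal of the trivial character.

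The gap is in Steps 3--4. Reduction modulo $\mathfrak m$ is defined only on the subalgebra of $\ed(\Pi)$ that preserves $\mathfrak m\Pi$. So an expression for $\tilde\phi$ can be specialised term by term only if every term lies in that subalgebra. You justify this by saying that $\ed(\Pi)$ is generated over $\xc_R[M]$ by normalised operators $J_s(\pi)$ which preserve $\Pi$ and specialise to multiples of $J'_s(\pi)$. That is false as soon as a type I reflection occurs:
\begin{itemize}
\item If it held, every element of $\ed(\Pi)$ would preserve $\mathfrak m\Pi$.
\item By projectivity, every $h\in\ho(\Pi,\ip(\pi))$ has the form $\mathrm{sp}\circ\tilde h$ with $\tilde h\in\ed(\Pi)$, where $\mathrm{sp}\colon\Pi\sra\ip(\pi)$ is the specialisation. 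Every such $h$ would then factor through $\mathrm{sp}$, forcing $\dim_R\ed(\ip(\pi))=\dim_R\ho(\Pi,\ip(\pi))$.
\item For $\Sp_1=\mathrm{SL}_2$ and $\pi$ the trivial character of the diagonal torus (type I), the left side is $1$, since $\ip(\pi)$ is irreducible. The right side is $2$: by second adjointness it equals the dimension of the two-dimensional Jacquet module of $\ip(\pi)$.
\end{itemize}
Concretely, for $\mathrm{SL}_2$ the Bernstein relation for the Iwahori--Hecke generator $T_s$ shows directly that $T_s(\mathfrak m\Pi)\not\subseteq\mathfrak m\Pi$. Note also that, as the paper records, the $J_w(\pi)$ only take values in $\ip(\pi\otimes_R\cK\otimes\psi\un)$. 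So the $c_w$ can have poles at the trivial character. The whole content of the corollary is that, for those $\tilde\phi$ which do preserve $\mathfrak m\Pi$, the polar parts contribute nothing outside the span of the $J'_w(\pi)$. You rightly flag this as the delicate point, but the argument you give for it does not hold.

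The fallback fails too, because the $J'_w(\pi)$ are not linearly independent. For a type I reflection, $J'_s(\pi)$ is a scalar, and so is the operator exchanging two isomorphic $\rho_i$ (\Cref{P:polesG}, \Cref{P:polesgl}). Over $\ql$, \Cref{P:basecasetemperedl} gives $\dim\ed(\ip(\pi))=2^d$, typically far below $\lvert W_\alpha(\scrO)\rvert$. So the bound $\dim\le\lvert W_\alpha(\scrO)\rvert$ plus independence would prove a false statement. Consistently, by \Cref{L:noresidue} the Frobenius-reciprocity map of $J'_w(\pi)$ vanishes on the $w$-cell when $\Lambda(\pi,P,wPw^{-1})>0$.

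A cell-by-cell argument can only work as a leading-term induction along the Geometric Lemma filtration of $r_P(\ip(\pi))$. Leading terms on cells with $\Lambda(\pi,P,wPw^{-1})=0$ can be removed using $J'_w(\pi)$. One then has to prove separately that no element of $\ho_M(r_P(\ip(\pi)),\pi)$ has its leading term on a cell with $\Lambda(\pi,P,wPw^{-1})>0$, which is a non-splitting statement for the filtration at those cells. That input is what is missing in both of your routes. The paper delegates exactly this step to the explicit presentation of $\ed(\Pi)$ in \cite[Theorem 5.10]{Heiermann2011} together with \cite[Proposition 7.8]{Dat05}.
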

\begin{proof}
    Indeed, by \Cref{T:progenerator} any map in $\ed(\ip(\pi))$ can be lifted to a map in \[\ho(\ip(\pi\otimes_R\xc_R[M]\otimes\psi\un),\ip(\pi\otimes_R\xc_R[M]\otimes\psi\un)).\] By the explicit description of the latter space in \Cref{T:Heier}, \cite[Theorem 5.10]{Heiermann2011} and \cite[Proposition 7.8]{Dat05} the claim follows.
\end{proof}
\begin{prop}\label{P:basecasetemperedl}
    Let $P=P_\alpha\subseteq G,\,\alpha=(\alpha_1,\ldots,\alpha_m)$, be a parabolic subgroup of a classical group.
    Let $\rho_1,\ldots,\rho_k\in\irr_{R,c}(G_{\alpha_i})$ be $\fc$-self-dual and pairwise non-isomorphic. Let  $\sigma\in \irr_{R,c}(G')$, where $G'$ is the classical component of $M_\alpha$. Let $d$ be the number of $\rho_i$ such that $\rho_i\otimes \sigma$ is of type II.
    Then for all $r_i\in\NN,\,\ain{i}{1}{k}$, we have that 
    \[\Pi_r\coloneqq \ip(\pi),\,\pi\coloneqq\overbrace{\rho_1\otimes\ldots\otimes\rho_1}^{r_1}\otimes\ldots\otimes\overbrace{\rho_k\otimes\ldots\otimes\rho_k}^{r_k}\otimes \sigma\] is semi-simple of length $2^d$ and multiplicity-free.

In particular, if $R=\fl$ and we have a lift $\tpi$ of $\pi$ to $\ql$, any irreducible summand of $\Pi_r$ lifts uniquely to a summand of $\ip(\tpi)$.    
\end{prop}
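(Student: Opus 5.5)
We compute $\ed(\Pi_r)$ with \Cref{C:pro} and show that it is a commutative semisimple algebra of dimension $2^d$. For a multiplicity-free semisimple module of length $2^d$ the endomorphism algebra is $R^{2^d}$. So it suffices to prove two things: $\ed(\Pi_r)\cong R^{2^d}$, and $\Pi_r$ is semisimple, i.e.\ a direct sum of its images under the idempotents.

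First we determine $W_\alpha(\scrO)$. Since the $\rho_i$ are pairwise non-isomorphic and $\fc$-self-dual, the elements stabilising $\scrO$ are generated by two kinds of elements: permutations of equal blocks $\rho_i\otimes\rho_i$, and sign changes $\rho_i\mapsto{}^\fc\rho_i^\lor\cong\rho_i$ on a single block. (Here we must exclude $\rho_i\otimes\chi\cong\rho_j$ for $i\ne j$, which uses the banal hypothesis on the cuspidal line.) For a permutation of two copies of $\rho_i$, \Cref{P:polesgl} with $\rho_1\cong\rho_2$ shows the rank-one operator is an isomorphism and $\rho_i\times\rho_i$ is irreducible. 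Hence the corresponding $J'_w(\pi)$ is a scalar. For the sign change on a copy of $\rho_i$, \Cref{P:polesG} applies to $\rho_i\otimes\sigma'$, where $\sigma'$ is the relevant cuspidal of the smaller classical group. One has to check, via the Geometric Lemma and commutation with the $\rho_j\times\rho_j$ permutations, that the type of $\rho_i\otimes\sigma'$ equals that of $\rho_i\otimes\sigma$. Then $J'_w(\pi)$ is a scalar in type I, and in type II it is a non-scalar operator whose square is a scalar (the $j$-function is regular and nonzero since $\alpha=0$). All copies of $\rho_i$ give the same operator up to conjugation by permutations, which are scalars. Therefore, by \Cref{L:compositionJ}, $\ed(\Pi_r)$ is generated by $d$ commuting elements $T_i$ with $T_i^2\in R^\times$.

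This already gives $\dim\ed(\Pi_r)\le 2^d$. For the reverse inequality we need the $T_i$ to be linearly independent over the monomials, and we get it by a multiplicity count. By the Geometric Lemma, $r_P(\Pi_r)$ has $\pi$ appearing with multiplicity $|W_\alpha(\scrO)|$. Via Frobenius reciprocity and the projectivity of cuspidals in the banal case, $\dim\ho(\Pi_r,\Pi_r)$ equals the number of $w\in W_\alpha(\scrO)$ whose associated Jacquet summand splits off, and the type I reflections identify the relevant cosets. Since $\ell\neq2$, $R[T_1,\dots,T_d]/(T_i^2-c_i)\cong R^{2^d}$. The main obstacle is this step: making sure the generators really satisfy these relations, and that no nilpotents arise, over $\fl$. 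I would handle this by working over $\zl$: take a lift $\tpi$, use \Cref{L:ratJ} to see that the $T_i$ preserve the lattice, and reduce the relations $T_i^2=c_i$ with $c_i\in\zl^\times$, using \Cref{T:cuslift} for the cuspidal inputs.

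For semisimplicity, the $2^d$ orthogonal idempotents decompose $\Pi_r=\bigoplus_e e\Pi_r$. Each summand has $\ed=R$. Each summand is also irreducible: any irreducible subquotient $\tau$ embeds into $\Pi_r$ by Frobenius reciprocity, since $r_P(\tau)$ is $\pi$-isotypic and cuspidals are projective. So $\Pi_r$ is generated by its socle and is semisimple. The summands are pairwise non-isomorphic because $\ed(\Pi_r)$ is commutative. Finally, for the lifting statement, the idempotents of $\ed(\ip(\tpi))$ are polynomials in the $\tilde T_i$ with $\zl$-coefficients, since the denominators are powers of $2$ and the $c_i$ are units. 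They therefore preserve $\ip(\fo)$ and reduce to the idempotents over $\fl$. Hence each summand $e\ip(\fo)$ reduces to $e\Pi_r$. Uniqueness follows because both sides have exactly $2^d$ summands matched bijectively by the idempotents.
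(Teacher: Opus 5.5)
\textbf{Verdict: genuine gap.} The step you yourself flag as the main obstacle is not closed. Your upper bound $\df\ed(\Pi_r)\le 2^d$ via the $T_i$ matches the paper's. Neither of your routes gives the reverse inequality.

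\begin{itemize}
\item The multiplicity count only restates the claim. Deciding which pieces of the Geometric-Lemma filtration of $r_P(\Pi_r)$ split off is equivalent to computing $\df\ed(\Pi_r)$.
\item The $\zl$-route fails as written. \Cref{L:ratJ} requires $\df\ho(\ip(\pi),\id_{\op}^G(\pi))=1$. Already for the rank-one operator of a type II pair this Hom space is $2$-dimensional, since $\rho_i\rtimes\sigma$ is a sum of two non-isomorphic irreducibles. So the lemma cannot tell you that a rescaled $\tilde T_i$ preserving $\ip(\fo)$ reduces to your $T_i$ rather than to some other, possibly nilpotent, endomorphism.
\item Integrality alone only gives $\tilde T_i^2=\tilde c_i\in\zl$. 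The claim $\tilde c_i\in\zl^\times$ is exactly the non-nilpotence at issue.
\item Commutativity of the $T_i$ over $\fl$ is asserted, not proved. Intertwining operators compose only up to non-zero scalars. A relation $T_iT_j=-T_jT_i$ is compatible with $T_i^2,T_j^2\in\fl^\times$, and would make $\ed(\Pi_r)$ a matrix algebra, i.e.\ produce multiplicities.
\item Minor: banality does not exclude $\rho_j\cong\rho_i\otimes\chi$ with $\chi$ unramified (e.g.\ quadratic). Such $w$ do not fix $\pi$, so they contribute nothing to $\ed(\Pi_r)$.
\end{itemize}

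\textbf{How to close it.} The missing input is the $\ql$-case, entering through the endomorphism lattice. The paper uses this input differently. It bounds the socle of $\Pi_r$ from below via the lift and counts $\sum_\pi d_\pi^2\le\df\ed(\Pi_r)\le 2^d$, with MVW identifying socle and cosocle multiplicities. It never needs the algebra structure of $\ed(\Pi_r)$.

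To repair your route, set $\Lambda\coloneqq\ed_{\zl[G]}(\ip(\fo))$.
\begin{enumerate}
\item $\Lambda$ is a lattice in $\ed(\ip(\tpi))$, which is $\cong\ql^{2^d}$ by the $\ql$-case.
\item Reduction $\Lambda\bl\to\ed(\Pi_r)$ is injective: if $\phi$ reduces to $0$ then $\ell^{-1}\phi\in\Lambda$.
\item With your upper bound this map is an isomorphism. Hence $\ed(\Pi_r)$ is commutative of dimension $2^d$.
\item $\ed(\Pi_r)$ is generated by the $T_i$, and \Cref{P:polesG} gives $T_i^2\in\fl^\times$ directly over $\fl$. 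So it is a quotient of $\fl[x_1,\ldots,x_d]/(x_i^2-c_i)\cong\fl^{2^d}$, hence $\cong\fl^{2^d}$.
\item Consequently $\Lambda$ has reduced reduction, so it is the maximal order $\zl^{2^d}$ and contains the idempotents. This is exactly what your lifting paragraph needs.
\end{enumerate}

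\textbf{Semisimplicity step.} This also needs repair. $r_P(\tau)$ is not $\pi$-isotypic when two of the $\rho_i$ have the same degree. Moreover, ``every irreducible subquotient embeds'' does not by itself give semisimplicity. Use \Cref{P:MVW} instead:
\begin{itemize}
\item By $\fc$-self-duality, $\Pi_r\mvw\cong\Pi_r^\lor$. Hence every irreducible quotient $X$ of $e\Pi_r$ embeds in some $e'\Pi_r$.
\item The composite $e\Pi_r\to X\to e'\Pi_r$ lies in $e'\ed(\Pi_r)e$. By commutativity this is $0$ for $e'\neq e$.
\item For $e'=e$ the composite is a non-zero element of $\ed(e\Pi_r)=\fl$, hence an isomorphism. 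So $e\Pi_r\cong X$.
\end{itemize}
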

\begin{proof}
    Over $\ql$ the result follows from \cite{arthur2013endoscopic}, see also \cite[§4]{atobe2022soclesclassical}.
    Over $\fl$, we use the case of $\ql$ as an input. 
    Firstly, by \Cref{L:shiftgl} we can assume without loss of generality that the $\rho_i$ such that $\rho_i\otimes \sigma$ is of type II are exactly those with $1\le i\le d$. 
    We start by noting that by \Cref{C:pro}, \Cref{P:polesG}, \Cref{P:polesgl}, and \Cref{L:compositionJ} we have that $\dim_\fl\ed(\Pi_r)\le 2^d$. Indeed, denote for each $\ain{i}{1}{d}$ by $s_i$ the simple element of $W_\alpha$ such that $s_i$ acts on the Levi factor
    \[M_\alpha=G_{\alpha_1}\times \ldots\times G_{\alpha_1}\times G_{\alpha_2}\times\ldots\times G_{\alpha_i}\times\ldots\] trivial everywhere except on the $r_i$-th factor of $G_{\alpha_i}$, where it acts via $g\mapsto {}^tg^{-1}$.
    
     We then have nontrivial morphisms \[J_{w}(\pi)'\in\ed(\Pi_r),\, w=s_1^{\epsilon_1}\ldots s_d^{\epsilon_d},\, \epsilon_i\in\{0,1\}.\]
    By \Cref{C:pro}, \Cref{P:polesG}, \Cref{P:polesgl}, \Cref{L:compositionJ}, and \cite[Proposition 7.8]{Dat05}, these morphisms are a spanning set of $\ed(\Pi_r)$.
 Thus, it follows that $\dim_\fl(\Pi_r)\le 2^d$. Furthermore, if we denote for $\pi\in \irr_\fl(G)$ by $d_\pi$ the multiplicity of $\pi$ in the socle of $\Pi_r$, and hence also by the MVW-involution in the cosocle of $\Pi_r$. Thus, we obtain \[\sum_\pi d_\pi^2\le 2^d\] by considering maps of the form $\Pi_r\ra\pi\ra\Pi_r$. Note that if $\Pi_r$ were not semi-simple, then the identity would not be within the span of these morphisms and hence the equality would be strict.

    But now we can choose a lift $\tpi$ of $\pi$ thanks to \Cref{T:cuslift} and use the fact that we already proved that $\tpi$ is semi-simple of the desired length. In particular, we know that the socle of $\Pi_r$ has length at least $2^d$. From this it quickly follows that the socle is multiplicity free and $\sum_\pi d_\pi^2=2^d$, hence $\Pi_r$ is semi-simple and multiplicity free of length $2^d$.
\end{proof}
\section{Derivatives}\label{S:der}
 In this section we recall the derivatives of general linear and classical groups, following the appendix of \cite{atobe2024local}. They were first introduced by \cite{Min08} and \cite{Jader}, for the modular case see also \cite{Dro24}. Note that in the banal case cuspidal representations are always projective in their respective block, therefore the same definitions as in \cite{atobe2024local} work without adjustments.
 
  Assume $G=G_n$ is a general linear group and let $\rho\in\irr_{R,c}(G_m)$. A representation $\pi\in\irr_{R,c}(G)$ is called $\rho$-reduced, if either $m>n$ or $r_{m,n-m}(\pi)$ does not contain a $(G_m,\rho)$-isotypic subquotient.
    \begin{lemma}\label{L:dergl}
        Assume $G=G_n$ is a general linear group and let $\rho\in\irr_{R,c}(G_m)$. Let $d_\rho(\pi)$ be the maximal $k\in\NN$ such that $r_{km,n-km}(\pi)$ contains a subquotient of the form $\rho^k\otimes \pi'$. Then the following holds.
        \begin{enumerate}
            \item $\dr(\pi)\coloneqq \pi'$ is uniquely determined by $\pi$.
            \item $\dr(\pi)$ is $\rho$-reduced.
            \item $\Lambda(\dr(\pi)\otimes\rho^k,P_{n-km,km},\overline{P_{n-mk,km}})=0$.
            \item The image of $J_{P_{n-km,km},\overline{P_{n-mk,km}}}(\dr(\pi)\otimes \rho^k)$ is isomorphic to $\pi$.
            \item The integer $d_\rho(\pi)$ is the maximal $k$ such that there exists an irreducible $\pi'$ with $\pi'\times\rho^k$ admitting $\pi$ as a quotient.
            \item If $\pi'\times\rho^{d_\rho(\pi)}$ admits $\pi$ as a quotient, $\pi'\cong \dr(\pi)$.
        \end{enumerate}
    \end{lemma}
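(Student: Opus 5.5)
Let $k=d_\rho(\pi)$ and write $P=P_{n-km,km}$. The plan is to deduce everything from Frobenius reciprocity, the Geometric Lemma and \Cref{L:ratJ}-type arguments, using that $\ell$ is banal so that $\rho^j$ is irreducible and $\rho$ is projective in its block.

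\textbf{Steps (1), (2), (5), (6).} Since $\rho^k$ is irreducible, the $\rho^k$-isotypic part of $r_{km,n-km}(\pi)$ (in the sense of cuspidal support on the first factor) splits off as a direct summand by block decomposition. I will show that it is of the form $\rho^k\otimes\pi'$ with $\pi'$ irreducible. First, $\pi'$ is $\rho$-reduced: otherwise, by transitivity of Jacquet functors, $r_{(k+1)m,\ldots}(\pi)$ would contain a subquotient of type $\rho^{k+1}\otimes-$, contradicting the maximality of $k$. Now let $\pi'$ be an irreducible quotient of that summand, obtained after moving $\rho^k$ to the right via the contragredient/second adjointness so that the statement is about quotients of $\pi'\times\rho^k$. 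Frobenius reciprocity (second adjointness) then gives a nonzero map $\pi'\times\rho^k\to\pi$.

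Apply the Geometric Lemma to $r_{n-km,km}(\pi'\times\rho^k)$. Since $\pi'$ is $\rho$-reduced, only the trivial orbit contributes a $\rho^k$-isotypic piece on the second factor, so that piece equals $\pi'\otimes\rho^k$ with multiplicity one. It follows that $\pi'\times\rho^k$ has a unique irreducible quotient, that it occurs with multiplicity one, and that $\pi'$ is determined by $\pi$. This gives (1), (2) and (6). For (5), if $\pi'\times\rho^j\twoheadrightarrow\pi$, then Frobenius reciprocity puts $\pi'\otimes\rho^j$ in the Jacquet module, so $j\le k$.

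\textbf{Step (3).} I use \Cref{L:noresidue}. The composite map $r_{\overline P}(\pi'\times\rho^k)\to\pi'\otimes\rho^k$ must not vanish on the closed-orbit piece. By the Geometric Lemma for $\overline P$, the constituent $\pi'\otimes\rho^k$ occurs exactly once in $r_{\overline P}(\pi'\times\rho^k)$, again because $\pi'$ is $\rho$-reduced. It therefore lies in the correct orbit piece and the map cannot vanish there, so $\Lambda=0$.

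\textbf{Step (4).} The image of $J_{P,\overline P}$ is a nonzero quotient of $\pi'\times\rho^k$ and a subrepresentation of $\id_{\overline P}(\pi'\otimes\rho^k)\cong\rho^k\times\pi'$. By the multiplicity-one statement above, $\ho(\pi'\times\rho^k,\rho^k\times\pi')$ is one-dimensional, and both the cosocle of the source and the socle of the target are $\pi$; the socle statement uses the involution $g\mapsto{}^tg^{-1}$ as in \Cref{L:shiftgl}. Since $\pi$ occurs only once in $\pi'\times\rho^k$, the image must be $\pi$.

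\textbf{Main obstacle.} The hardest step is the multiplicity-one claim in the Geometric Lemma computation when $\rho\otimes\lvert\det\rvert^{j}$ with $o(\rho)\mid j$ coincides with $\rho$ over $\fl$. The $\rho$-reducedness of $\pi'$ still excludes every non-trivial orbit, because any such orbit would move a copy of $\rho$ from the cuspidal support of $\pi'$ into the second factor. I would verify this carefully using uniqueness of cuspidal support in the banal case.
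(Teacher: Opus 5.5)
\textbf{The gap.} Your multiplicity computation for (1), (2), (5), (6) is done in the wrong Jacquet module, and the claim is false. You assert that in $r_{n-km,km}(\pi'\times\rho^k)=r_P(\pi'\times\rho^k)$ only the trivial orbit produces a second factor with cuspidal support $k[\rho]$. But a non-trivial orbit there moves a piece from the \emph{right} end of a Jacquet module of $\pi'$ into the $G_{km}$-factor. $\rho$-reducedness is a condition on $r_{m,\cdot}$, i.e.\ on the \emph{left} end, and says nothing about this.

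Here is a counterexample, already over $\ql$. Take $n=3$, $m=k=1$, $\rho$ a character of $\FF^\times$, and $\pi'=(\rho\lvert\cdot\rvert^{-1/2})\circ\det$, so that $r_{1,1}(\pi')=\rho\lvert\cdot\rvert^{-1}\otimes\rho$. Then $\pi'$ is $\rho$-reduced, and it is $\dr(\pi)$ for the irreducible $\pi=\pi'\times\rho$ (with $d_\rho(\pi)=1$). Yet
\[[r_{2,1}(\pi'\times\rho)]=[\pi'\otimes\rho]+[(\rho\lvert\cdot\rvert^{-1}\times\rho)\otimes\rho],\]
and $\pi'\hookrightarrow\rho\lvert\cdot\rvert^{-1}\times\rho$, so $\pi'\otimes\rho$ occurs twice. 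Your ``main obstacle'' paragraph repeats exactly this mistaken mechanism. Separately, even a correct multiplicity-one statement in $r_P$ would, via Frobenius reciprocity, control the \emph{sub}representations of $\pi'\times\rho^k$. It would not control the quotients, which is what you conclude.

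\textbf{The repair.} The right computation is the one in your Step (3): work with $r_{\overline P}(\pi'\times\rho^k)$, i.e.\ $r_{km,n-km}(\pi'\times\rho^k)$ with the factors swapped. There, a non-trivial orbit moves a piece $\tau_1$ from the \emph{left} end of a Jacquet module of $\pi'$ into the $G_{km}$-factor, with $\tau_1$ supported on copies of $\rho$. That is exactly what $\rho$-reducedness forbids, so $\pi'\otimes\rho^k$ occurs once. From this you get:
\begin{enumerate}
\item By second adjointness, $\pi'\times\rho^k$ has a unique irreducible quotient, of multiplicity one, and $\dim\ho(\pi'\times\rho^k,\rho^k\times\pi')=1$.
\item Since $[r_{km,n-km}(\pi)]\le[r_{km,n-km}(\pi'\times\rho^k)]$, the part of $r_{km,n-km}(\pi)$ whose $G_{km}$-factor has cuspidal support $k[\rho]$ is exactly $\rho^k\otimes\pi'$. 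This gives (1) and (6).
\item Your arguments for (2), (5) and Step (4) then go through on this basis.
\end{enumerate}

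\textbf{Smaller slips.} First, to obtain $\pi'\times\rho^k\twoheadrightarrow\pi$ by second adjointness you need $\rho^k\otimes\pi'$ as a \emph{sub}representation of the $\rho$-summand of $r_{km,n-km}(\pi)$, not a quotient. (Taking a quotient instead gives $\pi\hookrightarrow\rho^k\times\pi'$ by ordinary Frobenius reciprocity, which also works.) Second, in Step (3) the piece on which the map must not vanish is the open cell $P\overline P$, not a closed orbit. That piece is the bottom of the filtration of $r_{\overline P}(\pi'\times\rho^k)$ and is a copy of $\pi'\otimes\rho^k$. With that correction your multiplicity argument for $\Lambda=0$ is sound.

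With these changes your proof becomes the standard Jantzen--M\'inguez argument. The paper does not prove this lemma itself; it relies on that argument by citing the appendix of Atobe--M\'inguez.
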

Assume now that $G$ is a classical group and let $\rho\in\irr_{R,c}(G_m)$. A representation $\pi\in\irr_R(G)$ is called $\rho$-reduced, if either $m>d_G$ or $r_{m}(\pi)$ does not contain a $(G_m,\rho)$-isotypic subquotient.
\begin{lemma}\label{L:dercl}
   Assume $G$ is a classical group and let $\rho\in\irr_{R,c}(G_m)$ be non-$\fc$-self-dual. Let $d_\rho(\pi)$ be the maximal $k\in\NN$ such that $km\le d_G$ and $r_{km}(\pi)$ contains a subquotient of the form $\rho^k\otimes \pi'$. Then the following holds.
        \begin{enumerate}
            \item $\dr(\pi)\coloneqq \pi'$ is uniquely determined by $\pi$.
            \item $\dr(\pi)$ is $\rho$-reduced.
            \item $\Lambda(({}^\fc\rho^\lor)^k\otimes \dr(\pi),P_{km},\overline{P_{km}})=0$.
            \item The image of $J_{P_{km},\overline{P_{km}}}(({}^\fc\rho^\lor)^k\otimes \dr(\pi))$ is isomorphic to $\pi$.
            \item The integer $d_\rho(\pi)$ is the maximal $k$ such that there exists an irreducible $\pi'$ with $({}^\fc\rho^\lor)^k\rtimes \pi'$ admitting $\pi$ as a quotient.
            \item If $({}^\fc\rho^\lor)^{d_\rho(\pi)}\rtimes \pi'$ admits $\pi$ as a quotient, $\pi'\cong \dr(\pi)$.
        \end{enumerate}
    \end{lemma}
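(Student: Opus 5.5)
We prove \Cref{L:dercl} by transcribing the argument for \Cref{L:dergl} (and the complex-coefficient treatment in the appendix of \cite{atobe2024local}) to the classical setting. The only non-formal input is the structure of the Jacquet module $r_{km}$ of induced representations of the form $({}^\fc\rho^\lor)^k\rtimes\pi'$, which the Geometric Lemma of \Cref{S:GL} supplies. Banality enters in two places. First, $\rho^k$ and $({}^\fc\rho^\lor)^k$ are irreducible and cuspidal support is well defined. Second, cuspidal representations are projective in their blocks, so $\rho$-isotypic parts of Jacquet modules split off as direct summands.

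\textbf{Step 1: existence and uniqueness of $\dr(\pi)$, and (1), (2).} Set $k=d_\rho(\pi)$. Since $\rho$ is not $\fc$-self-dual, $\rho\not\cong{}^\fc\rho^\lor$. The $\rho^k$-part of $r_{km}(\pi)$ is therefore the generalized $\rho$-isotypic block component with respect to the $G_{km}$-factor, and by projectivity it is a direct summand of $r_{km}(\pi)$. Choose an irreducible quotient $\rho^k\otimes\pi'$ of this summand. Frobenius reciprocity gives a non-zero map $\pi\to\rho^k\rtimes\pi'$. Applying the MVW-involution and contragredient (\Cref{P:MVW}) then shows that $\pi$ is a quotient of $({}^\fc\rho^\lor)^k\rtimes\pi'$.

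Next we compute $r_{km}(\rho^k\rtimes\pi')$ with the Geometric Lemma, using maximality of $k$: if $\pi'$ were not $\rho$-reduced, we could increase $k$. The terms containing $\rho^k$ on the $G_{km}$-factor are exactly $\rho^k\otimes\pi'$. Any other term would require a copy of $\rho$ coming either from ${}^\fc\rho^\lor$, which is excluded by non-self-duality, or from $r_m(\pi')$, which is excluded because $\pi'$ is $\rho$-reduced. Hence the $\rho$-isotypic part of $r_{km}(\rho^k\rtimes\pi')$ is $\rho^k\otimes\pi'$ with multiplicity one.

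Every irreducible $\pi''$ appearing in the $\rho^k$-summand of $r_{km}(\pi)$ then satisfies $\pi''\cong\pi'$, because $\pi$ embeds into $\rho^k\rtimes\pi''$ for each such $\pi''$ and the socles are compared through the multiplicity-one computation. This gives (1) and (2), and the same reasoning gives (5) and (6) directly.

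\textbf{Step 2: (3) via \Cref{L:noresidue}.} Take $P=P_{km}$ and $\sigma=({}^\fc\rho^\lor)^k\otimes\dr(\pi)$. We must show that the Frobenius map $r_{\overline{P}}(\ip(\sigma))\to\sigma$ is non-zero on the open-orbit piece and vanishes on no smaller orbit, that is, that $\sigma$ occurs only in the subquotient coming from the open cell. By the Geometric Lemma, the other cells produce subquotients in which part of ${}^\fc\rho^\lor$ is turned into $\rho$, or in which a factor of $r_m(\dr(\pi))$ contributes. The first is excluded because $\rho\not\cong{}^\fc\rho^\lor$, and the second because $\dr(\pi)$ is $\rho$-reduced. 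Hence $\Lambda=0$. This holds over both $\ql$ and $\fl$.

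\textbf{Step 3: (4).} By Step 2 together with the multiplicity-one statement, $\dim\ho(\ip(\sigma),\id_{\overline P}^G(\sigma))=1$. The image of $J$ is therefore the image of the unique map. Since $\pi$ is a quotient of $\ip(\sigma)$ (Step 1) and, dually, a subrepresentation of $\id_{\overline P}^G(\sigma)$, the unique map factors through $\pi$. It remains to show that the image is irreducible. For this we use that $\ip(\sigma)$ has a unique irreducible quotient, which follows from the multiplicity-one occurrence of $\rho^k\otimes\dr(\pi)$ in the Jacquet module together with Frobenius reciprocity, applied again after the MVW-involution. The image is thus a quotient of $\ip(\sigma)$ with simple cosocle $\pi$ and a subrepresentation of $\id_{\overline P}^G(\sigma)$ with simple socle $\pi$. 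Since $\pi$ occurs with multiplicity one in $\ip(\sigma)$, the image equals $\pi$.

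\textbf{Main obstacle.} We expect the main difficulty to be the bookkeeping in the Geometric Lemma for classical groups: the Weyl elements can send ${}^\fc\rho^\lor$ to its contragredient twist $\rho$, and exactly these terms must be shown not to contribute. This is where the non-$\fc$-self-duality of $\rho$ is essential. We would follow the explicit Tadić-type formula for $r_{km}$ used in \cite{atobe2024local}.
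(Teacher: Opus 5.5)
Your proposal is correct and follows the same route as the paper. The paper gives no separate proof: it defers to the appendix of \cite{atobe2024local}, noting that banality (irreducibility of $\rho^k$, projectivity of cuspidals) lets that argument go through unchanged, while (3)--(4) come, as in the proof of \Cref{C:lifting}, from the multiplicity-one occurrence of $({}^\fc\rho^\lor)^k\otimes\dr(\pi)$ in the open-cell piece, which you verify via the Geometric Lemma and \Cref{L:noresidue}. Two small tightenings: deduce uniqueness in (1) by applying the exact functor $r_{km}$ to $\pi\hookrightarrow\rho^k\rtimes\pi'$, since Frobenius reciprocity gives embeddings only for quotients, not for all subquotients; and drop the last part of Step 3, since the unique map already factors as $\ip(\sigma)\twoheadrightarrow\pi\hookrightarrow\id_{\overline{P}}^G(\sigma)$ and so has image $\pi$.
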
\section{Lifting representations}
In this section we show the promised lifting properties. Throughout this section we assume that $\ell$ is a banal prime.
We start with the case of the general linear group, which was already proven in \cite{minguez2013representations}, however our methods allow us to give a new proof.    
\begin{corollary}[{\cite[Theorem 6.1]{minguez2013representations}}]\label{T:lift}
    Let $\tpi\in\irr_\ql(G_n)$ be an irreducible representation with an integral banal cuspidal support. Then $\rl(\tpi)$ is irreducible. 
    If $\pi\in\irr_\fl(G_n)$, then $\pi$ admits a lift to $\ql$.
\end{corollary}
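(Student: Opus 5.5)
We argue by induction on $n$, using the derivatives of \Cref{L:dergl} to peel off one cuspidal line at a time, and \Cref{L:ratJ} to compare the intertwining operators over $\ql$ and $\fl$. The base case is \Cref{T:cuslift}; note also that for a cuspidal $\trho$ with banal $\ell$, $\trho^k$ is irreducible and reduces to the irreducible $\rho^k$.

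For the first claim, let $\tpi$ have integral banal cuspidal support. If $\tpi$ is cuspidal we are done. Otherwise choose $\trho$ in $\cusp(\tpi)$ with $k\coloneqq d_\trho(\tpi)>0$; we take $\trho$ at the ``end'' of its cuspidal line, e.g. such that $\trho\otimes\lvert\det\rvert$ is not in the support. This is possible by banality condition (1), which forbids the line from wrapping around modulo $o(\rl([\trho]))$. Put $\tpi'\coloneqq\dr(\tpi)$. By induction, $\pi'\coloneqq\rl(\tpi')$ is irreducible, since the cuspidal support of $\tpi'$ is a sub-sum of a banal support and hence again banal. Let $\rho=\rl(\trho)$.

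We then apply \Cref{L:ratJ} to $\tpi'\otimes\trho^k$ and $P=P_{n-km,km}$, checking its hypotheses as follows.
\begin{enumerate}
\item Condition (2) of \Cref{L:ratJ} over $\ql$ is part (3) of \Cref{L:dergl}. Over $\fl$ we need $\pi'$ to be $\rho$-reduced, so that by the Geometric Lemma and \Cref{L:noresidue} $\Lambda(\pi'\otimes\rho^k,P,\op)=0$. This holds because $r_\ell$ commutes with Jacquet functors on Grothendieck groups, and banality guarantees that no other lift in the support reduces to $\rho$.
\item The one-dimensionality of $\ho(\pi'\times\rho^k,\rho^k\times\pi')$ follows from Frobenius reciprocity and the Geometric Lemma: $\pi'\otimes\rho^k$ occurs with multiplicity one in the relevant Jacquet module, again because $\pi'$ is $\rho$-reduced.
\item The twist condition asks that for $a\equiv1\bmod\ell$, $a\neq1$, the twisted operators have no pole, i.e. $\alpha(\tpi'\otimes\trho^k\nu_a,P)=0$. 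The poles of $\alpha$ are determined by the pairs of cuspidals from the two factors via \Cref{P:polesgl}. Such a pole needs $\trho\nu_a\cong\trho'\lvert\det\rvert^{\epsilon}$ with $\trho'\in\cusp(\tpi')$ and $\epsilon\in\{-1,0,1\}$. Since $\nu_a$ reduces trivially, this would force $\rho$ and $\rl(\trho')$ onto the same line, with an unramified discrepancy that is a non-integral or wrapped-around twist; both are excluded by condition (1) of the banal support. Here we use the remark after \Cref{L:ratJ}, which reduces the twist condition to $\alpha=0$.
\end{enumerate}

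\Cref{L:ratJ} then shows that $\rl$ of the lattice image, which is a lattice in $\tpi$ by \Cref{L:dergl}(4), equals $\im J_{P,\op}(\pi'\otimes\rho^k)$. That image is a quotient of $\pi'\times\rho^k$ and a submodule of $\rho^k\times\pi'$. By the one-dimensionality of the Hom space, together with the usual argument that the unique irreducible quotient of $\pi'\times\rho^k$ embeds in $\rho^k\times\pi'$ with multiplicity one, the image is irreducible; equivalently, $\im J$ is the image of a nonzero map and its socle and cosocle are both the multiplicity-one constituent. Hence $\rl(\tpi)$ is irreducible. The main obstacle is this last irreducibility step together with item 3. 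For the irreducibility we would try the $\fl$-version of \Cref{L:dergl}(4), applied to the irreducible quotient: it identifies $\im J_{P,\op}(\pi'\otimes\rho^k)$ with an irreducible representation once we know the image is generated by its $\pi'\otimes\rho^k$-isotypic Jacquet piece, which has multiplicity one.

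For the second claim, take $\pi\in\irr_\fl(G_n)$. By \Cref{L:cuspsupext} there is a banal lift of $\cusp(\pi)$ and an integral $\tpi$ with that support whose reduction contains $[\pi]$. By the first part $\rl([\tpi])$ is irreducible, so it equals $[\pi]$, and $\tpi$ is a lift.
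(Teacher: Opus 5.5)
Your proposal is correct and follows the paper's proof. You induct on $n$ via $\dr$, check the hypotheses of \Cref{L:ratJ} from the $\rho$-reducedness of $\rl(\dr(\tpi))$ and the banal support (your item 3 is the paper's appeal to \Cref{L:shiftgl}), and use \Cref{L:cuspsupext} for the second claim; your socle/cosocle argument just makes explicit the irreducibility of $\im J_{P,\op}(\pi'\otimes\rho^k)$, which the paper leaves implicit through \Cref{L:dergl} over $\fl$. The only slip is the aside that $\trho$ with $d_\trho(\tpi)>0$ can be taken so that $\trho\otimes\lvert\det\rvert$ is not in the support: this fails for the irreducible subrepresentation of $\trho\times(\trho\otimes\lvert\det\rvert)$, but nothing in your argument uses it.
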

\begin{proof}
    We argue by induction on $n$, the base case being \Cref{T:cuslift}. Let $\trho$ be an cuspidal representation in such that $d_\trho>0$. If $\mathcal{D}_\trho(\tpi)=0$, the case follows from the fact that $\rho\times\ldots\times\rho$ is irreducible, \emph{cf.} \cite{minguez2013representations}. Otherwise, we know that $\mathcal{D}_\rho(\tpi)$ and $\rl([\dr(\tpi)])$, which we know by induction to be irreducible, are $\trho$- respectively $\rho\coloneqq \rl([\trho])$-reduced. Indeed, the first observation follows by definition and the second one follows from the first via the fact that Jacquet-functor commutes with reduction mod $\ell$ up to semi-simplificaition and the fact that cuspidal support is banal. The first part of the theorem follows now from the induction hypothesis, \Cref{L:ratJ} and \Cref{L:shiftgl}. 
    Indeed, according to the comment after \Cref{L:ratJ}
    one just needs to check that $\alpha(((\trho)^{d_\trho}\otimes\nu_a)\otimes \mathcal{D}_\rho(\tpi))=0$ for $a\neq 1$. By the assumption on the cuspidal support and \Cref{L:shiftgl} the claim follows.
    The last part follows from \Cref{L:cuspsupext}.
\end{proof}
\begin{rem}
    Note that in the above theorem also the $\Z$-parameter of \cite{MinSec14} of $\tpi$ and $\pi$ agree thanks to the explicit formulas for derivatives in \cite{Dro24}.
    Of course these was already shown in \cite{minguez2013representations}, however it gives a new proof of also this fact.
\end{rem}
\begin{lemma}[{\cite[5.3]{AtobeMinguez2020}}]\label{L:basecasetempered}
    Let $\tpi\in\irr_\ql(G)$ be a tempered representation with $G$ a classical group. If $\tpi$ is reduced for all non-$\fc$-self-dual cuspidal representations of an arbitrary general linear group then there exists $P_\alpha\subseteq G$ and a $\fc$-self-dual cuspidal representation $\rho_1\otimes\ldots\otimes\rho_k\otimes\sigma$ of $M_\alpha$ such that  \[\pi\hra \rho_1\times\ldots\times\rho_k\rtimes \sigma.\]
\end{lemma}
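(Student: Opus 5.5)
We will deduce this from Arthur's classification of tempered representations, in the form of the Mœglin–Tadić description of discrete series and tempered $L$-packets, transported to $\ql$ via the fixed isomorphism $\ql\cong\CC$.

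First, recall that every tempered $\tpi$ embeds into $\tau_1\times\ldots\times\tau_r\rtimes\pi_{ds}$, where the $\tau_i$ are discrete series of general linear groups and $\pi_{ds}$ is a discrete series of a smaller classical group. Each $\tau_i$ is a Steinberg-type segment representation $\delta([\rho\lvert\det\rvert^{-a},\rho\lvert\det\rvert^{a}])$ with $\rho$ unitary cuspidal.

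We then use the hypothesis to remove segments. If some $\tau_i$ has $\rho$ non-$\fc$-self-dual, or a segment of length greater than one, then $r_m(\tpi)$ contains a subquotient whose $G_m$-part is a non-$\fc$-self-dual cuspidal representation. Concretely, for a segment of length at least two, the end $\rho\lvert\det\rvert^{a}$ with $a>0$ is non-$\fc$-self-dual even when $\rho$ is. Such a subquotient contradicts $\rho$-reducedness. So all $\tau_i$ are $\fc$-self-dual cuspidal.

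The same argument applies to $\pi_{ds}$. Its Jordan-block and partial-cuspidal-support description gives a nonzero Jacquet module $r_m(\pi_{ds})$ with exponent $\rho\lvert\det\rvert^{x}$, $x>0$, unless $\pi_{ds}$ embeds into a product of $\fc$-self-dual cuspidals induced from a cuspidal $\sigma$. In the Mœglin–Tadić language, every Jordan block is then of length one with trivial partial cuspidal support shift. Since $\rho\lvert\det\rvert^{x}$ with $x\neq 0$ is never $\fc$-self-dual when $\rho$ is unitary, reducedness forces this case.

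Finally, we combine the two steps using transitivity of induction: $\tpi$ embeds into $\rho_1\times\ldots\times\rho_k\rtimes\sigma$ with all $\rho_i$ $\fc$-self-dual and $\sigma$ cuspidal.

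The main obstacle is the step for $\pi_{ds}$, namely showing that a discrete series which is reduced for all non-self-dual cuspidals is a subrepresentation of an induction from self-dual cuspidals. The plan is to invoke Mœglin's result that a discrete series $\pi_{ds}$ is not cuspidal exactly when some exponent $\rho\lvert\det\rvert^{x}$ with $x\ge 1/2$ appears in a Jacquet module. We would then argue by induction on $d_G$, peeling off $\fc$-self-dual cuspidals $\rho$ that occur with $x=0$ via the embedding $\pi\hra\rho\rtimes\pi'$. This is the content cited from \cite[5.3]{AtobeMinguez2020}; there the statement is phrased using derivatives, and for $\ql$ the result transfers verbatim.
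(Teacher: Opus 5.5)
\textbf{There is a genuine gap, and it cannot be closed as the statement is formulated.}

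The gap is in your third paragraph, where you say that ``the same argument applies to $\pi_{ds}$''. The reducedness hypothesis is on $\tpi$, not on $\pi_{ds}$, and you never transfer it.

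For the $\tau_i$ the transfer is harmless. Products of unitary discrete series of general linear groups are irreducible, so any $\tau_i$ can be moved to the front, and Frobenius reciprocity then puts its top exponent into $r_m(\tpi)$. You should say this explicitly.

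For $\pi_{ds}$ the transfer fails. If $\pi_{ds}$ is not cuspidal, Casselman's criterion alone gives $\pi_{ds}\hra\rho\lvert\det\rvert^x\rtimes\pi'$ with $\rho$ unitary cuspidal and $x>0$. So the ``main obstacle'' you single out is immediate and needs no M\oe glin--Tadi\'c theory. But this only yields $\tpi\hra\rho_1\times\cdots\times\rho_r\times\rho\lvert\det\rvert^x\rtimes\pi'$, with the $\rho_i$ self-dual cuspidal. To get a $\rho\lvert\det\rvert^x$-isotypic subquotient of $r_m(\tpi)$, you must commute $\rho\lvert\det\rvert^x$ past the $\rho_i$. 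This is impossible when some $\rho_i\cong\rho$ and $x=1$, because $\rho\times\rho\lvert\det\rvert$ is reducible. The exponent then occurs in the Jacquet module of the full induced representation, but need not occur in its subrepresentation $\tpi$.

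The statement fails in exactly this configuration. Let $G=\mathrm{Sp}_4(\FF)$ (rank two).
\begin{itemize}
\item The representation $1\rtimes\mathrm{St}_{\mathrm{SL}_2}$ is semisimple and reducible. The trivial character has good parity, and $(1,1)\notin\mathrm{Jord}(\mathrm{St}_{\mathrm{SL}_2})=\{(1,3)\}$.
\item The Geometric Lemma gives $[r_1(1\rtimes\mathrm{St}_{\mathrm{SL}_2})]=2[1\otimes\mathrm{St}_{\mathrm{SL}_2}]+[\lvert\cdot\rvert\otimes\mathrm{Ind}_B^{\mathrm{SL}_2}(1)]$, and the last term is irreducible.
\item Each constituent is a subrepresentation, so each contains $1\otimes\mathrm{St}_{\mathrm{SL}_2}$. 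Hence one constituent $\tau$ has $r_1(\tau)=1\otimes\mathrm{St}_{\mathrm{SL}_2}$. Consequently $r_2(\tau)=\lvert\det\rvert^{1/2}\otimes 1$, which has no cuspidal $G_2$-part.
\end{itemize}
Thus $\tau$ is tempered and $\rho$-reduced for every non-self-dual cuspidal $\rho$. Yet its cuspidal support contains $\lvert\cdot\rvert^{\pm1}$, so it embeds into no $\rho_1\times\rho_2\rtimes\sigma$ with $\rho_1,\rho_2$ self-dual.

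A correct base case needs an additional hypothesis. For instance, one could exclude embeddings $\tpi\hra Z\rtimes\pi'$, where $Z$ is the irreducible subrepresentation of $\rho\times\rho\lvert\det\rvert$ with $\rho$ self-dual (in the spirit of the $Z_\rho[0,1]$-derivatives of \cite{AtobeMinguez2020}). That hypothesis is exactly what would have to be invoked at the commutation step above. The paper only cites \cite{AtobeMinguez2020} and does not address this point either.
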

Note that in the above remark the case $R=\ql$ was of crucial importance, which in turn rests on the Arthur-classification of tempered representations of classical groups. 

Using \Cref{L:shiftG}, \Cref{L:ratJ}, and \Cref{L:dercl}, the following can be proved analogously as \Cref{T:lift}.
The base case is \Cref{L:basecasetempered} and \Cref{P:basecasetemperedl}. 
\begin{corollary}\label{C:liftGtemp}
    Let $\tpi\in\irr_\ql(G)$ be a tempered integral representation with $G$ a classical group and $\ell$ a banal prime with respect to $G$. Then $\rl(\tpi)$ is irreducible.
\end{corollary}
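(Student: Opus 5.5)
We argue by induction on $d_G$, following the proof of \Cref{T:lift}; the base case $d_G=0$, or more generally cuspidal $\tpi$, is \Cref{T:cuslift}. Let $\tpi$ be tempered and integral, and set $\pi\coloneqq\rl(\tpi)$ as an element of $K_\fl(G)$. We split into two cases according to whether $\tpi$ is $\trho$-reduced for every non-$\fc$-self-dual cuspidal $\trho$ of every general linear group.

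\textbf{Case 1: $\tpi$ is reduced for all such $\trho$.} By \Cref{L:basecasetempered}, $\tpi\hra\trho_1\times\ldots\times\trho_k\rtimes\tsigma$ with all $\trho_i$ $\fc$-self-dual cuspidal and $\tsigma$ cuspidal. Integrality of $\tpi$ forces the cuspidal support to be integral. Since $\ell$ is banal, conditions (2) and (3) of a banal cuspidal support ensure that the reductions $\rho_i\coloneqq\rl([\trho_i])$ are still $\fc$-self-dual, and that two $\rho_i$ coincide exactly when the corresponding $\trho_i$ do. We would then check that $\rho_i\otimes\sigma$ has the same type (I or II) as $\trho_i\otimes\tsigma$; the tool here is \Cref{L:ratJ} applied to the maximal Levi $G_{m}\times G'$, together with the last part of \Cref{P:polesG}, which bounds the exponents where $\alpha$ can be nonzero by $d_{W,m}$ and thus excludes nontrivial twists $\nu_a$ with $a\equiv1\bmod\ell$. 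Consequently the number $d$ in \Cref{P:basecasetemperedl} agrees over $\ql$ and $\fl$. Both $\ip(\tpi')$ and $\ip(\pi')$ (with $\pi'=\rl(\tpi')$ for the inducing cuspidal datum) are then semisimple, multiplicity-free, of length $2^d$. Since $\rl$ commutes with induction in the Grothendieck group, reduction sends the $2^d$ summands over $\ql$ bijectively to the $2^d$ summands over $\fl$, so each reduction, and in particular $\rl(\tpi)$, is irreducible.

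\textbf{Case 2: there exists non-self-dual $\trho$ with $d_\trho(\tpi)=k>0$.} By \Cref{L:dercl}, $\tpi$ is the image of $J_{P_{km},\overline{P_{km}}}(({}^\fc\trho^\lor)^k\otimes\mathcal{D}_\trho(\tpi))$ with vanishing $\Lambda$. We need $\mathcal{D}_\trho(\tpi)$ to satisfy the induction hypothesis. It need not be tempered, so we either strengthen the induction statement to all integral irreducible $\tpi$ with banal cuspidal support, or observe that its Langlands data stay controlled. The first option seems cleaner: we prove the statement for all such $\tpi$, since tempered integral representations automatically have banal cuspidal support by the discussion in \Cref{L:cuspsupext}. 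Then $\rl(\mathcal{D}_\trho(\tpi))$ is irreducible. Moreover it is $\rho$-reduced for $\rho=\rl([\trho])$, because Jacquet functors commute with $\rl$ in $K$ and the cuspidal support is banal (condition (4)). Applying \Cref{L:dercl} over $\fl$ identifies the image of the reduced intertwining operator with an irreducible $\pi$, and the multiplicity-one hypothesis (1) of \Cref{L:ratJ} follows from Frobenius reciprocity and the geometric lemma, as in \Cref{L:shiftgl}.

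\textbf{Main obstacle.} The key step is verifying the last hypothesis of \Cref{L:ratJ}: $\alpha(\tpi_0\otimes\nu_a,P)=0$ for $a\neq1$ with $a\equiv1\bmod\ell$, where $\tpi_0=({}^\fc\trho^\lor)^k\otimes\mathcal{D}_\trho(\tpi)$. Since $\alpha$ depends only on cuspidal support, this reduces to the rank-one computations \Cref{P:polesgl} and \Cref{P:polesG}. A nonzero $\alpha$ would force ${}^\fc\trho^\lor\otimes\nu_a$ to be linked (differing by $\lvert\det\rvert^{\epsilon}$, or $\fc$-self-dual up to $\lvert\det\rvert^{k/2}$ with $k/2\le d_{W,m}$) to some member of the support. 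Because $a\equiv1\bmod\ell$ and $a\ne1$, such a link would give a congruence modulo $o(\rho)$ without equality. This is excluded by conditions (4) and (5) of banal cuspidal support, i.e. by $\lvert k+2k'\rvert>2d_{W,m}$, or alternatively by \Cref{L:shiftG}. Granting this, \Cref{L:ratJ} gives $\rl(\tpi)=\im J(\pi_0)\cong\pi$, which is irreducible and completes the induction.
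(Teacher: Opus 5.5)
Your architecture (derivatives along non-$\fc$-self-dual cuspidals fed into \Cref{L:ratJ}, with base case \Cref{L:basecasetempered} plus \Cref{P:basecasetemperedl}) matches the paper's one-line sketch. However, the two steps you add to make the induction close both fail.

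First, a tempered integral $\tpi$ need not have banal cuspidal support. \Cref{L:cuspsupext} only says that the cuspidal support of an $\fl$-representation admits \emph{some} banal lift. For instance, in $\Sp_1$ let $\tchi_0$ be the nontrivial unramified quadratic character. Let $\tchi=\tchi_0\nu_\zeta$, where $\nu_\zeta$ is unramified with $\nu_\zeta(\varpi)=\zeta$ a primitive $\ell$-th root of unity. Then $\tchi$ is unitary, and since $\ell$ is odd it is not quadratic. Hence $\tchi\rtimes\mathbf 1$ is irreducible, tempered and integral. But its reduction $\rl(\tchi_0)\rtimes\mathbf 1$ is also the reduction of the length-two representation $\tchi_0\rtimes\mathbf 1$, hence reducible; condition (2) of banality fails for $\tchi$. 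So the corollary as printed is false. What the paper's ``analogously to \Cref{T:lift}'' actually gives, and what \Cref{C:lifting} uses, is the version for $\tpi$ with integral banal cuspidal support. You must add that hypothesis rather than claim it is automatic.

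Second, you correctly observe that $\mathcal{D}_\trho(\tpi)$ need not be tempered, a point the paper passes over. But strengthening the statement to all integral irreducible $\tpi$ with banal cuspidal support does not close the induction. Case 1 then has to handle non-tempered $\tpi$ that are reduced for every non-$\fc$-self-dual cuspidal, and \Cref{L:basecasetempered} requires temperedness. Such $\tpi$ exist. In the rank-two symplectic group, let $L$ be the unique irreducible subrepresentation of $\lvert\det\rvert^{-1/2}\mathrm{St}_{\mathrm{GL}_2}\rtimes\mathbf 1$. Comparing its Borel exponents with those of $\mathbf 1\rtimes\mathrm{St}_{\Sp_1}$ and $\mathbf 1\rtimes\mathbf 1_{\Sp_1}$ shows that the minimal Jacquet module of $L$ is exactly $\mathbf 1\otimes\lvert\cdot\rvert^{-1}$. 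Hence:
\begin{itemize}
\item $L$ is reduced for all non-self-dual cuspidals;
\item its cuspidal support is banal once $o(q)$ is large;
\item yet $L$ is not tempered and embeds in no representation induced from self-dual cuspidals.
\end{itemize}

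The repair is the other option you mention. Prove \Cref{C:liftGtemp} and \Cref{C:lifting} by a simultaneous induction on $d_G$. Treat a non-tempered $\mathcal{D}_\trho(\tpi)$ via its Langlands datum as in \Cref{C:lifting}; its tempered part lives on a Levi subgroup whose classical factor has strictly smaller rank, so the induction hypothesis applies.
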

Finally, we come to the main result.
\begin{corollary}\label{C:lifting}
    Let $\tpi\in\irr_\ql(G)$ be an irreducible representation with an integral banal cuspidal support. Then $\rl(\tpi)$ is irreducible.
If $\pi\in\irr_\fl(G_n)$, then $\pi$ admits a lift to $\ql$.
\end{corollary}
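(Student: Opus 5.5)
We use the Langlands classification to reduce to \Cref{C:liftGtemp}, and then apply \Cref{L:ratJ} once to the Langlands quotient map. Let $\tpi\in\irr_\ql(G)$ have integral banal cuspidal support. By the Langlands classification there is a unique triple $(P_\alpha,\chi,\tsigma)$ with $\tsigma=\tsigma_1\otimes\ldots\otimes\tsigma_k\otimes\tsigma_0$ tempered and $\chi$ positive, such that $\tpi$ is the image of the unique (up to scalar) map $S(\tpi)\ra S'(\tpi)$. Since the exponents of an integral representation are integral, $\tsigma\otimes\chi$ is integral. Its cuspidal support is contained in that of $\tpi$ and is therefore banal.

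We first show that $\sigma:=\rl(\tsigma\otimes\chi)$ is irreducible.
\begin{itemize}
\item For the general linear factors $\tsigma_i\otimes\chi_i$ this is \Cref{T:lift}.
\item For the classical factor $\tsigma_0$ this is \Cref{C:liftGtemp}.
\end{itemize}
Since the reduction of an integral induced representation is the induction of the reduction, $\id_{P_\alpha}^G(\fo)\bl\cong\id_{P_\alpha}^G(\sigma)$ for an integral structure $\fo$ of $\tsigma\otimes\chi$.

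We then apply \Cref{L:ratJ} with $P=P_\alpha$, $\op=\overline{P_\alpha}$. This gives $\rl(\tpi)\cong\im(J_{P,\op}(\sigma))$, and it remains to show that this image is irreducible. We must check three hypotheses.

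\begin{enumerate}
\item The condition $\df\ho(\id_P^G(\sigma),\id_{\op}^G(\sigma))=1$. We compute this via Frobenius reciprocity and the Geometric Lemma, showing that $\sigma$ occurs with multiplicity one in $r_{\op}(\id_P^G(\sigma))$. Over $\ql$ this multiplicity-one statement is the usual one: only the open cell contributes, because $\chi$ is positive and $\tsigma$ is tempered. Over $\fl$ we argue that any other cell contributing $\sigma$ would force a coincidence of cuspidal supports up to twists by $\lvert\det\rvert^{k}$ with $k$ a multiple of $o(\rho)$. Conditions (1), (4), (5) of banality exclude exactly these coincidences, so the $\ql$ computation transfers.
\item The vanishing $\Lambda=0$ over both $\ql$ and $\fl$. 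This follows from \Cref{L:noresidue} together with the same multiplicity-one statement.
\item The vanishing $\alpha(\tsigma\otimes\chi\otimes\nu_a,P)=0$ for $a\equiv 1\bmod\ell$, $a\neq1$, as in the remark after \Cref{L:ratJ}. We decompose via \Cref{L:compositionJ} into rank-one pieces and use \Cref{P:polesgl} and \Cref{P:polesG}. A pole at such an $a$ would produce a twist $\nu_a$, nontrivial but trivial mod $\ell$, with $\rho\otimes\nu_a\cong\rho'\lvert\det\rvert^{\epsilon}$, or with $\rho\otimes\nu_a$ at a reducibility point of $\rho\rtimes\sigma'$. The first case is excluded by (1), (4) and the bound $f(\rho)\le n$. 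In the second case the bound $\frac{k}{2}\le d_{W,m}$ of \Cref{P:polesG} contradicts condition (5).
\end{enumerate}

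I expect the main obstacle to be this Geometric-Lemma multiplicity check over $\fl$, where the positivity argument is unavailable. I would handle it by translating positivity into the combinatorics of cuspidal lines, using the banal lift as bookkeeping.

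Finally we show that $\im J_{P,\op}(\sigma)$ is irreducible. Any irreducible subrepresentation $\tau$ of $\id_{\op}^G(\sigma)$ satisfies $\ho(r_{\op}\tau,\sigma)\neq0$. By the multiplicity-one statement, the socle of $\id_{\op}^G(\sigma)$ is irreducible and occurs with multiplicity one. Applying the MVW-involution (\Cref{P:MVW}), or the involution $g\mapsto{}^tg^{-1}$ in the general linear case, the cosocle of $\id_P^G(\sigma)$ is that same representation. The image of $J_{P,\op}(\sigma)$ is a quotient of $\id_P^G(\sigma)$ contained in $\id_{\op}^G(\sigma)$ whose socle and cosocle both equal this irreducible with multiplicity one, so the image is irreducible.

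For the lifting statement, take $\pi\in\irr_\fl(G)$. By \Cref{L:cuspsupext} there is a banal lift of $\cusp(\pi)$ and an integral $\tpi$ with that support such that $[\pi]\le\rl([\tpi])$. Since $\rl(\tpi)$ is irreducible by the above, $\rl(\tpi)\cong\pi$, so $\tpi$ is a lift of $\pi$.
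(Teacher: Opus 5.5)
Your proposal follows essentially the same route as the paper. You take the Langlands datum and get irreducibility of $\rl(\tsigma\otimes\chi)$ from \Cref{T:lift} and \Cref{C:liftGtemp}. You then transfer the Borel--Wallach multiplicity-one statement for $\sigma\otimes\chi$ in $r_{\overline{P_\alpha}}(\id_{P_\alpha}^G(\sigma\otimes\chi))$ to $\fl$ using banality, and apply \Cref{L:ratJ} with the pole condition at the twists $\nu_a$ checked via the factorization into maximal or rank-one pieces and the shift/pole lemmas. Your explicit socle--cosocle argument via the MVW-involution for the irreducibility of $\im J_{P,\op}(\sigma)$ just spells out a step the paper leaves implicit.
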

\begin{proof}
    Let $(P_\alpha,\tsigma,\chi)$ be a Langlands datum of $\tpi$. We already know that $\tsigma\otimes\tchi$ reduces to an irreducible representation $\sigma\otimes\chi$ mod $\ell$ thanks to \Cref{T:lift} and \Cref{C:liftGtemp}. Since the cuspidal support of $\tpi$ is banal, one can thus prove exactly as in \cite{Borel_Wallach_2000}, that $\sigma\otimes\chi$ appears with multiplicity $1$ in $r_{\overline{P_\alpha}}(\id_{P_\alpha}^G(\sigma\otimes \chi))$. 
    Moreover, we recall from \cite{Borel_Wallach_2000} that firstly, there is an up to a scalar unique morphism
    \[\id_{P_\alpha}^G(\tsigma\otimes \tchi)\ra \id_{\overline{P_\alpha}}^G(\tsigma\otimes \tchi),\] which is secondly given by $J_{P_\alpha,\overline{P_\alpha}}(\tsigma\otimes\tchi)$ and thirdly, satisfies $\Lambda(\tsigma\otimes\tchi,P_\alpha,\overline{P_\alpha})=0$. All three statements are a direct consequence of the fact that $\tsigma\otimes\tchi$ appears with multiplicity $1$ in $r_{\overline{P_\alpha}}(\id_{P_\alpha}^G(\tsigma\otimes \tchi))$. 
    
    In particular, we can apply as in the other two lifting proofs \Cref{L:ratJ} and \Cref{L:shiftG} to finish the proof. Note that \Cref{L:ratJ} can be applied as in the previous cases because it can be written as a composition of intertwining operators of maximal parabolic subgroups by \cite[Proposition 7.8]{Dat05}.
\end{proof}
\begin{corollary}\label{C:main}
    Let $\pi$ be a banal representation of a classical group of symplectic, orthogonal, or unitary type. Then $\pi$ admits a lift to $\ql$. Moreover, if the group is split, every irreducible representation over $\fl$ is banal if $\ell$ is banal.
\end{corollary}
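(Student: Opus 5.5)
Let $\pi\in\irr_\fl(G)$ be banal, so by definition its cuspidal support admits a banal lift. The plan is to produce a $\ql$-representation with that support and then use \Cref{C:lifting} to see that its reduction is exactly $\pi$.

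First, fix a banal lift of $\cusp(\pi)$. The second half of \Cref{L:cuspsupext} then gives an integral $\tpi\in\irr_\ql(G)$ with this banal cuspidal support such that $\rl([\tpi])$ contains $[\pi]$. Its proof is ``immediate'' in the split case, and the argument is not specific to split groups. Concretely, write $\pi\hra\ip(\rho_1\otimes\ldots\otimes\rho_k\otimes\sigma)$ with cuspidal inducing data. Lift each $\rho_i$ and $\sigma$ to the chosen banal lifts $\trho_i,\tsigma$ using \Cref{T:cuslift}. The induced representation $\ip(\trho_1\otimes\ldots\otimes\tsigma)$ is integral, and its reduction has the same semisimplification as $\ip(\rho_1\otimes\ldots\otimes\sigma)$, because $\rl$ commutes with parabolic induction. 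Since $[\pi]$ is a constituent of the latter, some irreducible constituent $\tpi$ of the $\ql$-induced representation satisfies $[\pi]\le\rl([\tpi])$. This $\tpi$ has the banal cuspidal support of the lift, and it is integral as a subquotient of an integral finite-length representation.

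Second, apply \Cref{C:lifting} to this $\tpi$. Because $\tpi$ has integral banal cuspidal support, $\rl([\tpi])$ is irreducible. Since it contains $[\pi]$, it equals $[\pi]$. By the Brauer–Nesbitt remark in \Cref{S:inrep}, every integral structure $\fo$ of $\tpi$ then satisfies $\fo\bl\cong\pi$, so $\tpi$ is a lift of $\pi$.

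The ``moreover'' part is exactly the first claim of \Cref{L:cuspsupext}. For split $\Sp_n$ and $\Oo_n$ with $\ell$ banal, \Cref{L:banalcrit} and the order formulas give a banal lift of every cuspidal support, so every $\pi\in\irr_\fl(G)$ is banal.

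The main obstacle is not this assembly but making sure \Cref{C:lifting} really holds for non-split groups of symplectic, orthogonal and unitary type. Its inputs are the tempered case \Cref{C:liftGtemp}, which relies on \Cref{L:basecasetempered} and \Cref{P:basecasetemperedl}, together with the reducibility bounds of \Cref{P:polesG}. The paper states that these extend via \cite{atobe2024local} and \cite{Heiermann2011}. If a gap appears there, I would first check condition (5) in the definition of banal support. That condition is designed precisely so that the twists $\tpi\otimes\nu_a$ with $a\equiv1\bmod\ell$, $a\neq1$, avoid the reducibility points bounded by $d_{W,m}$. This is the hypothesis needed in \Cref{L:ratJ}, which is the engine of \Cref{C:lifting}.
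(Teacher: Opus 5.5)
Your proposal is correct and is exactly the argument the paper intends; the paper gives no separate proof. The second claim of \Cref{L:cuspsupext} produces an integral $\tpi$ with the chosen banal cuspidal support whose reduction contains $[\pi]$, \Cref{C:lifting} then forces $\rl([\tpi])=[\pi]$, and the ``moreover'' part is the first claim of \Cref{L:cuspsupext}. You also note that the second claim of \Cref{L:cuspsupext}, although stated only for split groups, holds unchanged for non-split ones by inducing the lifted cuspidal datum and applying Brauer--Nesbitt; this is a useful precision that the paper leaves implicit.
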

We finish with the following proposition.
\begin{prop}
    Let $\pi\in\irr_{\fl}(G)$ and $\ell$ banal with respect to $G$.
    If there exists an essentially discrete series lift to $\ql$, then any lift to $\ql$ is an essentially discrete series.
\end{prop}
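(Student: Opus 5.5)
The idea is to characterize being an essentially discrete series purely in terms of data that survive reduction mod $\ell$. These data are the Jacquet modules in the Grothendieck group together with the cuspidal support. Let $\tpi_1,\tpi_2$ be two lifts of $\pi$, with $\tpi_1$ essentially discrete series. Both are integral and irreducible. By the properties recalled in \Cref{S:inrep}, for every standard parabolic $P_\alpha$ we have
\[\rl([r_\alpha(\tpi_1)])=[r_\alpha(\pi)]=\rl([r_\alpha(\tpi_2)]).\]
In particular the multisets of reductions of the cuspidal supports of $\tpi_1$ and $\tpi_2$ agree. Casselman's criterion expresses being essentially discrete series through the exponents of $r_\alpha(\tpi)$, i.e. through the cuspidal representations $\trho_1\otimes\ldots\otimes\trho_k\otimes\tsigma$ appearing in $[r_{\alpha}(\tpi)]$ for minimal $\alpha$ (the cuspidal ones). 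So the task is to show that the exponents of $\tpi_2$ are determined by those of $\tpi_1$ once one knows they have the same reductions.

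The first step is to show that the map $[\trho]\mapsto\rl([\trho])$ is injective on the cuspidal supports in question, up to the ambiguities that do not affect real parts of exponents. I plan to replace both lifts by lifts with banal cuspidal support. By \Cref{C:lifting}, any lift with banal integral cuspidal support reduces irreducibly. Conversely, I want to show that any lift of $\pi$ is, up to twisting by a unitary integral unramified character of the centre, determined by $\pi$. Concretely, I would run the Langlands datum argument of \Cref{C:lifting} backwards. Take the Langlands datum $(P_\alpha,\tsigma_i,\tchi_i)$ of $\tpi_i$. Then $\rl$ of $\tsigma_i\otimes\tchi_i$ should be the Langlands datum $\sigma\otimes\chi$ of $\pi$, in the sense that $\pi$ is the image of the reduced intertwining operator. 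The essentially discrete series property of $\tpi_1$ means its Langlands datum is trivial ($P_\alpha=G$ and $\tpi_1$ essentially tempered with strictly positive exponents). One then needs that $\tpi_2$ also has trivial Langlands datum.

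The key step, and the main obstacle, is comparing real parts of exponents when only their reductions mod $\ell$ agree. Two characters $\lvert\det\rvert^{s}$ and $\lvert\det\rvert^{s'}$ have the same reduction if $s-s'$ is a multiple of $o(\rho)$. Banality gives $o(\rho)m>2d_G$, exactly as in the proof of \Cref{L:cuspsupext}. I would use this as follows. The exponents of the discrete series $\tpi_1$ lie on cuspidal half-lines in a bounded window, since by \Cref{P:polesG} the reducibility points satisfy $\frac{k}{2}\le d_{W,m}$. A shift by a nonzero multiple of $o(\rho)$ would push some segment outside the range where a tempered, resp. discrete, representation can have its support. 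Such a shift would also contradict the uniqueness of $\dr$ in \Cref{L:dercl} applied to $\pi$. Hence $\cusp(\tpi_2)$ is forced, up to a global unitary twist, to equal $\cusp(\tpi_1)$. In particular $\cusp(\tpi_2)$ is banal and $\tpi_2$ falls under \Cref{C:lifting}.

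Finally, with both lifts having the same banal cuspidal support, I compare Jacquet modules. The restriction $\rl$ is injective on the Grothendieck group of representations with that fixed banal cuspidal support, because distinct constituents have distinct cuspidal exponents with distinct reductions. Therefore $[r_\alpha(\tpi_2)]=[r_\alpha(\tpi_1)]$ up to the common unitary twist for all $\alpha$. Casselman's criterion then shows that all exponents of $\tpi_2$ are strictly positive, so $\tpi_2$ is an essentially discrete series. I expect the bounding argument in the previous paragraph (ruling out $o(\rho)$-shifts using the banality bound) to require the most care, especially for $\fc$-self-dual lines, where the half-integral shifts of condition (5) enter.
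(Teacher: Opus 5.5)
You have a genuine gap. Your skeleton is reasonable: if $\cusp(\tpi_2)$ were known to equal the banal support $\cusp(\tpi_1)$ (up to an irrelevant central twist in the $\mathrm{GL}$ case), injectivity of $\rl$ on the $W$-orbit of that support would transfer the cuspidal exponents, and Casselman's criterion would finish. But your third paragraph, which is supposed to establish this, is circular. You exclude shifts by nonzero multiples of $o(\rho)$ because they would move the support ``outside the range where a tempered, resp.\ discrete, representation can have its support''. Whether $\tpi_2$ is tempered or discrete is exactly what you must prove, so this says nothing about $\tpi_2$.

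The remark after the proposition shows the problem is real. If $\rho\rtimes\sigma$ is irreducible with $\rho$ $\fc$-self-dual, then both $\trho\rtimes\tsigma$ and $\trho\lvert\det\rvert^k\rtimes\tsigma$ ($o(q)\mid k$, $k\gg0$) are lifts. So:
\begin{itemize}
\item lifts are in general not determined by $\pi$ up to a central twist;
\item $o(\rho)$-shifts do occur;
\item any argument that works verbatim for ``tempered'', as yours is phrased, proves a false statement;
\item so does any argument aiming only at a trivial Langlands datum for $\tpi_2$, since that gives temperedness, not discreteness.
\end{itemize}
The ambiguity is also larger than $o(\rho)$-shifts along a half-line. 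A constituent of $\cusp(\tpi_2)$ may be $\trho\otimes\chi$ for any unramified $\zl^\times$-valued $\chi$ with trivial reduction, and its real part under $\ql\cong\CC$ is arbitrary. You must first show that the support of $\tpi_2$ lies on $\fc$-self-dual half-lines at all. You also cannot ``replace'' $\tpi_2$ by a lift with banal support: $\tpi_2$ is given, and \Cref{C:lifting} is unavailable for it until its support is known to be banal. The appeal to uniqueness of $\dr$ in \Cref{L:dercl} is never turned into an argument.

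The missing idea is how discreteness of $\tpi_1$ constrains $\tpi_2$: through irreducibility of an induced representation containing $\tpi_2$. The paper inducts on the rank.
\begin{itemize}
\item For $\mathrm{GL}$, it uses the segment classification $\tpi_1=\Z([a,b]_\trho)$ and results of Mínguez--Sécherre to get $\tpi_2\cong\Z([a,b]_{\trho'})$.
\item For classical $G$, suppose an exponent of $\tpi_2$ along a maximal parabolic is not positive. Frobenius reciprocity gives $\tpi_2\hookrightarrow\Z([a,b]_{\trho_2})\rtimes\tsigma_2$. By induction, $\tsigma_2$ has banal support and reduces to the same $\sigma$ as the analogous $\tsigma_1$ attached to $\tpi_1$.
\item This induced representation cannot be irreducible. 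Otherwise $\pi$ would be fully induced, hence so would $\tpi_1$, since length over $\ql$ is at most the length of the reduction. That contradicts discreteness by the Geometric Lemma.
\item On the other hand, \Cref{L:shiftG} gives irreducibility unless $\trho_2$ lies on a $\fc$-self-dual half-line with the same real exponent as the corresponding datum of $\tpi_1$. This is where the banality bound and the bound $\frac{k}{2}\le d_{W,m}$ of \Cref{P:polesG} actually enter.
\end{itemize}
So those bounds must be used to force irreducibility, not to delimit where discrete series can live.

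A smaller point: your justification of injectivity of $\rl$ (``distinct constituents have distinct cuspidal exponents'') is false as stated. The two summands of a reducible type II $\trho\rtimes\tsigma$ have identical Jacquet modules. What you need, and what holds, is injectivity of $\rl$ on the $W$-orbit of a fixed banal cuspidal support at the minimal Levi.
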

\begin{proof}
     Let $\tpi_1$ and $\tpi_2$ be two lifts of $\pi$, with $\tpi_1$ an essentially discrete series. We want to show that $\tpi$ is an essentially discrete series.

    If $G$ is a general linear group, essentially discrete series representations are classified in \cite{BerZel77}. Namely, let $\trho$ be a cuspidal representation, and $a\le b\in\ZZ$. Set then $\Z([a,b]_\trho)$ to be the unique irreducible subrepresentation of $\trho\otimes\lvert\det\rvert^b\times\ldots\times\trho\otimes\lvert\det\rvert^a$. Than any essentially discrete series representation is of this form, in particular we assume that $\tpi_1=\Z([a,b]_\trho)$ is of this form.
    
    Since $\ell$ is banal, it follows that $\pi$ has to be the unique irreducible subrepresentation of $\rho\otimes\lvert\det\rvert^b\times\ldots\times\rho\otimes\lvert\det\rvert^a$, $\rho=\rl([\trho])$, which we also denote by $\Z([a,b]_\rho)$, see for example \cite{MinSec14}. Moreover, $\Z([a,b]_\trho)$ is a lift of $\Z([a,b]_\trho)$. We now prove by induction on the rank of $G$ that $\tpi_2$ is of the form $\Z([a,b]_{\trho'})$, where $\trho'$ is a lift of $\rho$.
    Indeed, if $\tpi_2$ is any other lift of $\pi$, then there exists $\trho''$ and $\tpi'$ irreducible such that $\tpi_2\hra\trho''\times \pi'$. It follows quickly that $\rl([\trho''])=\rho\otimes\lvert\det\rvert^b$ and $\rl([\pi'])=\Z([a,b-1]_\rho)$. By the induction hypothesis, $\pi'\cong\Z([a,b-1]_{\trho'})$ for some lift $\trho'$ of $\rho$. Now $\tpi_2\ncong \trho''\times \pi'$, since otherwise we get via the Geometric Lemma a contradiction to \cite[Lemma 7.13]{MinSec14}. Thus it follows by \cite[Theorem 7.23]{MinSec14} that $\trho''\cong\trho'\otimes\lvert\det\rvert^\epsilon,\,\epsilon\in\{a-1,b\}$. Again by \cite[Lemma 7.13]{MinSec14} and since $\ell$ is banal, $\epsilon=b$ and hence the claim follows from \cite[Proposition 7.16]{MinSec14}.

    Next, we come to the case of classical groups. Note that in the general linear case we proved that every lift had banal cuspidal support and we will show the same in the classical case. We argue by induction on the rank of $G$. Let $P_\alpha$ be a maximal parabolic subgroup and $\Pi$ an irreducible constituent of $r_\alpha(\tpi_2)$. Then by the above observation we have that $\Pi\cong \Z([a,b]_{\trho_2})\otimes \tsigma_2,$ where $\trho_2$ is a cuspidal representation. Let $\tchi_2$ be the central character of $\trho_2$. We then want to show that $\frac{a+b}{2}+\mathrm{Re}(\tchi_2)>0$. By contradiction, we assume the contrary and it suffices to show the claim where $\Pi$ is a quotient of the Jacquet-module. Then by Frobenius reciprocity we have that 
    \[\tpi_2\hra \Z([a,b]_{\trho_2})\rtimes \tsigma_2.\]
    We set $\sigma\coloneqq \rl([\tsigma_2])$.
    Arguing as in \cite[3.2]{Droschl2025symp}, we also note that $\sigma$ is uniquely determined by $\pi$ as the irreducible representation such that \[\pi\hra  \Z([a,b]_{\rl([\trho_2])}))\rtimes\sigma\]
    We can also find $\trho_1,a',b'\in\ZZ$ and $\tsigma_1$ such that 
   $\Z([a',b']_{\trho_1})\rtimes \tsigma_1$ appears as a quotient of $r_\alpha(\tpi_1)$, and $\Z([a,b]_{\trho_2})$ and $\Z([a',b']_{\trho_1})$ reduce to the same representation mod $\ell$. By Casselman's criterion $\tsigma_1$ is an essentially discrete series representation, and by the induction hypothesis it has banal cuspidal support, hence reduces mod $\ell$ to an irreducible representation, which by Frobenius reciprocity and the above is isomorphic to $\sigma$. 
We will now quickly argue that we cannot have an isomorphism \[\tpi_2\cong \Z([a,b]_{\trho_2})\rtimes\tsigma_2.\] Indeed, it would imply that there exists isomorphism \[\tpi_1\cong \Z([a',b']_{\trho_1})\rtimes \tsigma_1,\] which would contradict via the Geometric Lemma the discreteness of $\tpi_1$.
    
By the induction hypothesis we know that the cuspidal support of $\tsigma_2$ is banal.
   Thus ${\trho_2}\otimes\lvert\det\rvert^k$ is $\fc$-self-dual for some $k\in\frac{1}{2}\mathbb{Z}$. Indeed, if $k\notin\frac{1}{2}\mathbb{Z}$, we can apply \Cref{L:shiftG}, and obtain the above excluded isomorphism.
   Next, we will argue that $\frac{a'+b'}{2}+\mathrm{Re}(\tchi_1)=\frac{a+b}{2}+\mathrm{Re}(\tchi_2)$, which will finish the proof. To see this, we not that since $\ell$ is banal, we would have otherwise that there exists $k\in\ZZ$ such that $\frac{a+b}{2}+\mathrm{Re}(\tchi_2)=\frac{a'+b'}{2}+\mathrm{Re}(\tchi_1)+\frac{k}{2}\ge d_{W,m(b-a+1)}$ and hence we can apply again \Cref{L:shiftG}, and obtain the excluded isomorphism.
\end{proof}
\begin{rem}
    Note that the above Lemma is no longer true if one replaces \emph{discrete series} with \emph{tempered}.
    For example, let $\rho\otimes\sigma$ be a cuspidal $\fl$-representation of $M_\alpha$ of the Levi-factor of a maximal parabolic subgroup of a classical group such that $\rho\rtimes \sigma$ is irreducible. Then any lift $\trho\rtimes\tsigma$ with $\trho$ $\fc$-self-dual is also tempered, however the lifts $\trho\otimes\lvert\det\rvert^k\rtimes\tsigma$ are irreducible for $k\in\NN$ with $k\gg0$ and hence provide examples of non-tempered lifts when $o(q)\lvert k$.
\end{rem}
\section{Applications to the modular theta correspondence}\label{S:Howe}
In this section we give a straightforward application of the above results to the modular theta correspondence. In the classical setting over $\CC$ the precise behavior of Theta lifts is at this point well understood, and we refer the reader to \cite{GanAWS2022} for an excellent exposition to these topics for example. In the modular setting in type II \cite{Min08} gives a satisfactory answer in the banal case, which was extended in \cite{Droschl2026} to any $\ell$ which does not divide $q$. Note that as soon as one leaves the banal setting, classical Howe duality breaks down and several new phenomena appear. However, in the case of dual pairs of type I, not even the banal case has been treated, due to some difficulties involving the composition series of degenerate principal series. We are now able to overcome these issues and prove a modular Howe-duality in the (strongly) banal case in this section.

We recall the setup of the (modular) Theta correspondence as presented in \cite{Trias2025}. Throughout this section we assume that $\FF$ has characteristic $0$.
Let $\epsilon\in\{\pm1\}$, $W$ be an $\epsilon$-hermitian $\EE$-vector space of dimension $n$ and $V$ an $-\epsilon$-hermitian $\EE$-vector space of dimension $m$. If $\EE=\FF$ we also assume that both $n$ and $m$ are even, and set $\epsilon_0$ to $\epsilon$ if $\EE=\FF$ and to $0$ otherwise. The parity condition on $n$ and $m$ exists only to avoid metaplectic covers, since we do not prove the analogous results for them.
 We also assume without loss of generality that $n\le m+\epsilon_0$.
We denote their symmetry groups with $G(W)$ and $G(V)$. Moreover, let $\WW=W\otimes V$ be equipped with its natural symplectic form. Then $G(W)\times G(V)$ is a type I dual pair inside $G(\WW)$. We fix an $\zl$-valued smooth additive character $\psi\colon \FF\ra \zl$ and denote its reduction mod $\ell$ by abuse of notation with the same letter. 

We denote by $\omega_{\psi,R}$ the Weil representation of $G(\WW)$ over $R$ and for $\pi\in\irr_R(G(W))$ we let $\Theta_{W,V,\psi,R}(\pi)$ denote the big theta lift of $\pi$.
Denote by $W^-$ the same space as $W$, but whose form has been multiplied by $-1$. Recall that part of the construction of $\omega_{\psi,R}$ is a choice of splitting characters $\chi_W$ and $\chi_V$, which are unitary $\zl$-characters of $\FF^\times$.
By abuse of notation, we will also denote their reductions mod $\ell$ by the same letter.

We also set $s_{m,n}\coloneqq \frac{n-m-\epsilon_0}{2}$ and for $s\in\frac{1}{2}\ZZ$ we define
\[I(s)\coloneqq \id_{P_{n}}^{G(W+W^-)}(\chi_V\lvert\det\lvert^{s}),\] where $P_{2n}$ is seen as the stabilizer of the isotropic subspace $\Delta W\subseteq W+W^-$.
The following is due to Rallis in the case of $\CC$-coefficients and was extended to modular coefficients in \cite{Trias2025}.
\begin{lemma}
    We have $\Theta_{V,W+W^-,\psi,R}(\chi_W)\hra I(s_{m,n})$.
\end{lemma}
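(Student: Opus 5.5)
The embedding will come from the Rallis argument carried out directly over $R$, using only Frobenius reciprocity and the mixed model of the Weil representation. The proof should not depend on the banality of $\ell$, since it is a statement about the Weil representation itself.

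\textbf{Step 1: doubling.} Restricting $\omega_{\psi,R}$ for $V\otimes(W+W^-)$ to $G(V)\times G(W+W^-)$, the big theta lift $\Theta_{V,W+W^-,\psi,R}(\chi_W)$ is by definition the maximal $\chi_W$-isotypic quotient $(\omega_{\psi,R})_{G(V),\chi_W}$ of $\omega_{\psi,R}$ for the $G(V)$-action. The space $W+W^-$ has the diagonal $\Delta W$ as a Lagrangian, stabilized by $P_n$. Polarize $\WW'=V\otimes(W+W^-)$ as $V\otimes\Delta W\oplus V\otimes\Delta^-W$. In the resulting Schrödinger model, $\omega_{\psi,R}$ is realized on $C_c^\infty(V\otimes\Delta^- W,R)\cong C_c^\infty(\ho(W,V),R)$. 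Here $P_n$ acts by the usual formulas: the Levi $G_n=\mathrm{GL}(\Delta W)$ acts by translation twisted by $\chi_V(\det)\lvert\det\rvert^{m/2}$ (using the splitting characters, which are $\zl$-valued and so reduce mod $\ell$), and the unipotent radical acts by multiplication by $\psi$ of a quadratic form. The group $G(V)$ acts linearly on the variable.

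\textbf{Step 2: the map.} Define $\Phi\colon\omega_{\psi,R}\to I(s_{m,n})$ by $\Phi(\phi)(g)=(\omega_{\psi,R}(g)\phi)(0)$, evaluation at the origin. The formulas in Step 1 show that $\phi\mapsto\phi(0)$ is $P_n$-equivariant for the character $\chi_V\lvert\det\rvert^{m/2}$ modulo $\delta_{P_n}^{1/2}$. After normalizing, this character is $\chi_V\lvert\det\rvert^{s_{m,n}}$; the value $s_{m,n}=\frac{n-m-\epsilon_0}{2}$ is exactly what is left once $\delta_{P_n}^{1/2}$ is removed. Frobenius reciprocity then gives a $G(W+W^-)$-map $\Phi$. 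Evaluation at $0$ is $G(V)$-invariant, so $\Phi$ factors through the $G(V)$-coinvariants, which is the $\chi_W$-isotypic quotient once the twist by the splitting character is accounted for. This gives
\[\overline{\Phi}\colon \Theta_{V,W+W^-,\psi,R}(\chi_W)\to I(s_{m,n}).\]

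\textbf{Step 3: injectivity, the main obstacle.} We must show that $\ker\Phi$ equals the kernel of the projection to $G(V)$-coinvariants. Over $\CC$ this is Rallis' analysis, and the plan is to redo it over $R$. First, filter $C_c^\infty(\ho(W,V))$ by the $P_n$-orbits on $\ho(W,V)$, which are given by rank and the form induced on the image, so that the open orbit consists of the isometric embeddings. Second, use the assumption $n\le m+\epsilon_0$. It guarantees that every function vanishing at $0$ has trivial $G(V)$-coinvariants after applying $G(W+W^-)$; equivalently, the $G(V)$-coinvariants of $\omega_{\psi,R}$ are detected by the orbit of $0$ under the group action. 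This is the standard argument that $(\omega)_{G(V)}$ is generated by the image of $\delta_0$.

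The delicate point in the modular setting is exactness of coinvariants for $G(V)$, which is non-compact and may have pro-order divisible by $\ell$. I would avoid that problem by not relying on it. The first choice is to quote Trias's modular version of the Rallis computation directly. Failing that, I would check that the argument only uses coinvariants under unipotent groups, which are exact because they are pro-$p$, together with the Geometric Lemma for the orbit filtration. Combining Steps 2 and 3 gives the embedding $\Theta_{V,W+W^-,\psi,R}(\chi_W)\hookrightarrow I(s_{m,n})$.
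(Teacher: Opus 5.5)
The paper gives no argument of its own for this lemma. It attributes the embedding to Rallis over $\CC$ and to Trias for modular coefficients. Your Steps 1--2 (Rallis' map $\phi\mapsto(g\mapsto(\omega_{\psi,R}(g)\phi)(0))$, factoring through the $G(V)$-coinvariants) are the standard construction, and your ``first choice'' in Step 3, quoting Trias, is exactly what the paper does.

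However, injectivity of $\overline{\Phi}$ is the whole content of the lemma, and as an independent argument Step 3 does not supply it. The sentence invoking $n\le m+\epsilon_0$ (``the $G(V)$-coinvariants are detected by the orbit of $0$'') is a restatement of injectivity, not a reason for it. What has to be proved is that every $\phi$ with $(\omega_{\psi,R}(g)\phi)(0)=0$ for all $g\in G(W+W^-)$ lies in the span of the elements $\omega_{\psi,R}(h)\phi'-\phi'$ (suitably twisted), $h\in G(V)$. Evaluating at $g=w\,n(b)$ and at Levi and partial-Weyl translates turns the hypothesis into vanishing statements for push-forwards of $\phi$ (and of partial Fourier transforms of $\phi$) along the moment map, which sends $x\in\ho(W,V)$ to the pull-back of the form of $V$. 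Where this pulled-back form is non-degenerate, Witt's theorem makes the fibres single $G(V)$-orbits, so vanishing orbital integrals do give triviality in the coinvariants. Over the degenerate locus, though, the fibres break into many $G(V)$-orbits and a further argument is required; that is the content of Rallis' theorem and of Trias' modular version.

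Your fallback, which uses only coinvariants under unipotent subgroups plus the Bernstein--Zelevinsky Geometric Lemma, does not touch this problem. The problem concerns $G(V)$-coinvariants of Schwartz functions on $G(V)$-stable strata of $\ho(W,V)$. Some smaller inaccuracies:
\begin{itemize}
\item $P_n$ does not act geometrically on $\ho(W,V)$, since its unipotent radical acts by multiplication by characters.
\item The orbits you describe are those of $\mathrm{GL}(\Delta W)\times G(V)$.
\item The open orbits consist of injective maps with non-degenerate pulled-back form, not ``isometric embeddings''.
\item Non-exactness of coinvariants for the non-compact group $G(V)$ already occurs over $\CC$, so it is not what makes the modular case delicate.
\end{itemize}

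Also, carry out the exponent computation in Step 2 rather than asserting it. Take the data you state: the character $\chi_V(\det)\lvert\det\rvert^{m/2}$ at the origin, and $\delta_{P_n}^{1/2}=\lvert\det\rvert^{(n-\epsilon_0)/2}$ for the Siegel parabolic of $G(W+W^-)$, with the absolute value of $\EE$. Then the normalized exponent is $\frac{m-n+\epsilon_0}{2}$, which is $-s_{m,n}$ for the formula printed in the paper, not $s_{m,n}$. Since $I(s_{m,n})$ and $I(-s_{m,n})$ play genuinely different roles (compare the lemma with Hypothesis (H)), you must fix the normalization of $I(s)$, or of $s_{m,n}$, explicitly before claiming to land in the space the lemma names.
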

Next, we call $\ell$ strongly banal if it is banal with respect to $G(W+W^-)$, \emph{i.e.} $o(q)> 4n$.
Note that in proof of \cite[Proposition 6.17]{Trias2025}, the author considers also the $\zl$-valued Weil representation $\omega_{\psi,\zl}$, and in particular equips $\Theta_{V,W+W^-,\psi,\ql}(\chi_W)$ with a natural integral structure $\Theta_{V,W+W^-,\psi,\zl}(\chi_W)$ such that
\[\Theta_{V,W+W^-,\psi,\zl}(\chi_W)\bl\cong \Theta_{V,W+W^-,\psi,\fl}(\chi_W).\]
We now come to our main contribution.
\begin{prop}\label{P:H}
    The representation $\Theta_{V,W+W^-,\psi,\fl}(\chi_W)$ is irreducible if $\ell$ is strongly banal.
\end{prop}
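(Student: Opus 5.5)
The idea is to reduce \Cref{P:H} to the lifting results of the previous section, applied to the ambient group $G(W+W^-)$. By the construction recalled above, $\Theta_{V,W+W^-,\psi,\zl}(\chi_W)$ is an integral structure on $\Theta_{V,W+W^-,\psi,\ql}(\chi_W)$ whose reduction mod $\ell$ is $\Theta_{V,W+W^-,\psi,\fl}(\chi_W)$. It therefore suffices to show two things. First, $\tpi\coloneqq\Theta_{V,W+W^-,\psi,\ql}(\chi_W)$ is irreducible. Second, $\tpi$ has integral banal cuspidal support in the sense of \Cref{S:inrep}. Then \Cref{C:lifting}, applied to the classical group $G(W+W^-)$ with respect to which $\ell$ is banal by the strong banality assumption, yields that $\rl(\tpi)$ is irreducible, which is the claim.

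Irreducibility over $\ql$ (equivalently over $\CC$ via the fixed isomorphism) is classical. By Rallis' result, $\tpi\hra I(s_{m,n})$, and over $\CC$ the big theta lift of the character $\chi_W$ in the doubling see-saw is an irreducible constituent of the degenerate principal series. More precisely, by the work of Kudla--Rallis, Kudla--Sweet, Gan--Ichino, and Yamana on the socle of $I(s)$, it is the unique irreducible subrepresentation of $I(s_{m,n})$. I would cite this rather than reprove it.

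For the cuspidal support, note that $I(s)$ has cuspidal support contained in that of $\chi_V\lvert\det\rvert^{s}$ on $G_n$ induced up. This is the segment of characters $\chi_V\lvert-\rvert^{s+\frac{n-1}{2}-j}$, $j=0,\ldots,n-1$, together with the trivial representation of the trivial classical group. All these are characters of $G_1$ with $\zl$-valued values, since $\chi_V$ is unitary integral and $q^{1/2}$ was chosen compatibly, so the support is integral. The banal conditions then have to be checked one by one:
\begin{enumerate}
\item Conditions (1), (3) and (4) concern characters on a single cuspidal half-line whose exponents lie in an interval of length at most about $n+\lvert s_{m,n}\rvert$. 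Since $o(q)>4n$, no two distinct exponents differ by a nonzero multiple of $o(q)$, even after a shift by $\epsilon\in\{-1,0,1\}$. I would use $n\le m+\epsilon_0$ and the relevant stable-range bounds to control $\lvert s_{m,n}\rvert$ in terms of $n$; if $m$ is much larger than $n$, first reduce to the case where $I(s)$ is irreducible, or twist the lift by a multiple of $o(q)$ as in the proof of \Cref{L:cuspsupext}.
\item For condition (5) the relevant $\fc$-self-dual character is $\chi_V$, or $\chi_V$ twisted by a quadratic character, and the half-integers $k/2$ are exactly the exponents above. The required inequality $\lvert k+2k'\rvert>2d_{W+W^-,1}$ for nonzero multiples $k'$ of $o(q)$ follows from $o(q)>4n$ together with $d_{W+W^-,1}\approx n$.
\item Condition (2) is immediate, since $\chi_V$ already has the required self-duality over $\ql$.
\end{enumerate}

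The main obstacle is this banality verification in the case where $\lvert s_{m,n}\rvert$ is large compared to $n$, since then the exponents may leave the window that $4n<o(q)$ controls. There I would first try to replace $\tpi$ by the alternative lift of $\pi=\rl(\tpi)$ produced by \Cref{L:cuspsupext}, which has the same reduction and banal support. I would then need to check that $\tpi$ is still the unique subrepresentation of the corresponding twisted $I(s)$ over $\ql$, so that its reduction is $\Theta_{V,W+W^-,\psi,\fl}(\chi_W)$ up to semisimplification. If that fails, I would fall back on a direct argument: apply \Cref{L:ratJ} to the intertwining operator $I(-s)\ra I(s)$, whose image over $\ql$ is $\tpi$.
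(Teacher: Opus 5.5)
Your route is the same as the paper's: irreducibility of $\Theta_{V,W+W^-,\psi,\ql}(\chi_W)$ (the paper cites Gan--Ichino), banality of the cuspidal support of $I(s_{m,n})$, then \Cref{C:lifting} together with Trias' integral structure. The paper simply asserts the banality. You are right that large $\lvert s_{m,n}\rvert$ is the weak point, but your handling of it does not work.
\begin{itemize}
\item The inequality $n\le m+\epsilon_0$ bounds $m$ from below, so it gives no upper bound on $\lvert s_{m,n}\rvert=\frac{m+\epsilon_0-n}{2}$.
\item Conditions (3) and (4) are not only about differences of exponents. The classical cuspidal support contains both $\chi_V\lvert-\rvert^{e}$ and $\chi_V\lvert-\rvert^{-e}$, so these conditions also involve sums $e+e'\approx 2s_{m,n}$, which $o(q)>4n$ does not control.
\item Your first fallback is circular. \Cref{L:cuspsupext} needs an irreducible $\pi$ as input, and irreducibility of $\rl(\tpi)$ is exactly the claim. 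It also outputs some other $\ql$-representation, which says nothing about the lattice $\Theta_{V,W+W^-,\psi,\zl}(\chi_W)$.
\item Your second fallback fails too. The second half of \Cref{L:ratJ} needs $\alpha(\tpi\otimes\nu_a,P)=0$ for the twists $\nu_a$, which is precisely the banality that fails.
\end{itemize}

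More decisively, no argument can close this case under banality for $G(W+W^-)$ alone. Take $W$ symplectic of dimension $2$ and $N=o(q)\ge 9$, so $\ell$ is banal for $\Sp(W+W^-)$ and $o(q)>4n$. Take $V$ split orthogonal of dimension $m=2N+6$, so $\chi_V=1$ and $s_{m,n}=-N-\frac32$. Then $I_\ql(s_{m,n})$ is irreducible, hence equals $\Theta_{V,W+W^-,\psi,\ql}(\chi_W)$. But $\lvert\det\rvert^{-N}$ reduces to the trivial character, so the reduction is $\rl[I_\ql(-\frac32)]$, which has the trivial representation as a proper constituent. Both your strategy and the paper's therefore break down.

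What is missing is a hypothesis bounding $m$, e.g.\ $o(q)>m+\epsilon_0$. Banality on the $V$-side would provide this, and it is presumably what ``strongly banal for the pair'' in the final theorem is meant to include. Under $o(q)>\max(4n,m+\epsilon_0)$ the check is elementary:
\begin{itemize}
\item every exponent $\pm e$ satisfies $\lvert e\rvert\le\frac{m+\epsilon_0-1}{2}$;
\item so all sums and differences of exponents, shifted by $\epsilon\in\{-1,0,1\}$, stay below $o(q)$ in absolute value;
\item and $\lvert e+k'\rvert\ge o(q)-\frac{m+\epsilon_0-1}{2}>n\ge d_{W+W^-,1}$ for every nonzero multiple $k'$ of $o(q)$.
\end{itemize}
This gives conditions (1)--(5), and your argument then coincides with the paper's.
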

\begin{proof}
    We recall that $\Theta_{V,W+W^-,\psi,\ql}(\chi_W)$ is irreducible, see for example \cite[Proposition 7.2]{GanIch16}. Moreover, if $\ell$ is banal for $G(W+W^-)$, the cuspidal support of $I(s_{m,n})$ is banal, since the cuspidal support of $\lvert\det\lvert^{s_{m,n}}\chi_W$ is given by
    \[[\lvert-\rvert^{s_{m,n}+\frac{-n+1}{2}}\chi_W]+\ldots+[\lvert-\rvert^{s_{m,n}+\frac{n-1}{2}}\chi_W].\]
    Hence by \Cref{C:lifting}, the reduction of any irreducible subquotient of $I(s_{m,n})$ is irreducible. In particular, by the above observation, we have that 
    \[\Theta_{V,W+W^-,\psi,\fl}(\chi_W)\] is irreducible.
\end{proof}
The following statement was coined Hypothesis (H) in \cite{Trias2025}.
\[I(-s_{m,n})\sra \Theta_{V,W+W^-,\psi,\fl}(\chi_W)\]
In the work of \cite{Trias2025} it was the main obstacle in proving a banal version of Howe-duality.
It is a straightforward consequence of \Cref{P:H} and the MVW-involution.
\begin{corollary}\label{C:H}
    Let $\ell$ be a strongly banal prime for $G(W)$ and $G(V)$. Then Hypothesis (H) holds.
\end{corollary}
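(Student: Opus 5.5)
We derive Hypothesis (H) from \Cref{P:H} by applying the MVW-involution to the canonical embedding $\Theta_{V,W+W^-,\psi,\fl}(\chi_W)\hra I(s_{m,n})$, which holds over $\fl$ as well by the lemma of Rallis--Trias quoted above.

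First, by \Cref{P:H}, $\Theta\coloneqq\Theta_{V,W+W^-,\psi,\fl}(\chi_W)$ is irreducible. Applying the exact covariant involution $(-)\mvw$ of \Cref{P:MVW} on $\rep_\fl(G(W+W^-))$ gives an injection
\[\Theta\mvw\hra I(s_{m,n})\mvw\cong \id_{P_n}^{G(W+W^-)}\bigl({}^\fc(\chi_V\lvert\det\rvert^{s_{m,n}})\bigr).\]
Since $\Theta$ is irreducible, the first property of \Cref{P:MVW} gives $\Theta\mvw\cong\Theta^\lor$. Next we identify the right-hand side. The character ${}^\fc(\chi_V\lvert\det\rvert^{s})$ equals $({}^\fc\chi_V)\lvert\det\rvert^{s}$. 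Because $\chi_V$ is a splitting character, its restriction to $\FF^\times$ is fixed, so ${}^\fc\chi_V\circ\det$ and $\chi_V\circ\det$ agree on $\mathrm{GL}_n(\EE)$; in the case $\EE=\FF$ there is nothing to check. Hence $\Theta^\lor\hra I(s_{m,n})$. Dualizing, and using that the contragredient of a normalized induction is the induction of the contragredient, gives
\[I(s_{m,n})^\lor\cong\id_{P_n}^{G(W+W^-)}(\chi_V^{-1}\lvert\det\rvert^{-s_{m,n}})\sra\Theta.\]
Finally, $\chi_V^{-1}$ must be replaced by $\chi_V$. For a splitting character, $\chi_V^{-1}\circ\det$ on the Levi $\mathrm{GL}(\Delta W)$ is conjugate to $\chi_V\circ\det$ via the element of $G(W+W^-)$ normalizing $P_n$ and acting on the Levi by ${}^\fc(\cdot)^{-1}$. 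Combining the dualization with one more application of MVW to the quotient map, the conjugation by this element gives $I(-s_{m,n})\sra\Theta$, which is Hypothesis (H). Alternatively, one can apply $\mathrm{Ad}$ of an element of the similitude group exchanging $W$ and $W^-$; it fixes $\Theta$, because $\Theta$ is determined by the seesaw with $\chi_W$.

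The main obstacle is this last bookkeeping step. One must check that the characters match exactly, that is $\chi_V^{-1}$ versus ${}^\fc\chi_V$ and the unitarity of the splitting characters, including in the unitary case, and that $\Theta$ is stable under the twist used. Over $\ql$ this is Rallis' argument, and I expect the same manipulation to go through verbatim over $\fl$, since all the functors involved (MVW, contragredient, induction) are exact. If this identification fails, the fallback is to lift: take the $\ql$ surjection $I_{\ql}(-s_{m,n})\sra\Theta_{\ql}$, which is known in characteristic zero, and use the integral structure $\Theta_{V,W+W^-,\psi,\zl}(\chi_W)$. Reducing the surjection mod $\ell$ gives a nonzero map $I(-s_{m,n})\ra\Theta$, and this map is surjective because $\Theta$ is irreducible by \Cref{P:H}.
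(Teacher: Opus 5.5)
Your MVW argument breaks at the identification of ${}^\fc\chi_V$. In the unitary case the splitting character is a character of $\EE^\times$ with $\chi_V|_{\FF^\times}=\omega_{\EE/\FF}^{m}$. Since $\omega_{\EE/\FF}$ is trivial on norms, for $x\in\EE^\times$
\[{}^\fc\chi_V(x)=\chi_V(\fc(x))=\chi_V(x\fc(x))\,\chi_V(x)^{-1}=\chi_V(x)^{-1}.\]
So ${}^\fc\chi_V=\chi_V^{-1}$, not $\chi_V$. Prescribing the restriction to $\FF^\times$ does not make $\chi_V$ Galois-invariant.

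Your intermediate claim $\Theta^\lor\hra I(s_{m,n})$ is therefore false in general. For instance, if $\chi_V^2$ is ramified, the cuspidal support of $\Theta^\lor$ lies on the $\chi_V^{-1}$-line and that of $I(s_{m,n})$ on the $\chi_V$-line, and these are disjoint. The surjection you derive from $\id_{P_n}^{G(W+W^-)}(\chi_V^{-1}\lvert\det\rvert^{-s_{m,n}})$ fails for the same reason.

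The proposed repair cannot work either. Any $\EE$-linear similitude normalizing $P_n$ acts on the Levi quotient $\mathrm{GL}(\Delta W)$ by conjugation by its restriction to $\Delta W$, so it fixes $\chi_V\circ\det$. This covers elements of $G(W+W^-)$ and also the similitude swapping $W$ and $W^-$, which is the identity on $\Delta W$. In the unitary case only the $\fc$-antilinear MVW element induces ${}^\fc$ on the Levi, and one more MVW applied to your surjection would land on $\Theta^\lor$, not $\Theta$. For $\EE=\FF$ your conclusion is correct, but only because $\chi_V$ is quadratic.

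Once the identity is corrected, the argument closes at once, and this is the paper's proof (a consequence of \Cref{P:H} and the MVW-involution). In both cases ${}^\fc\chi_V=\chi_V^{-1}$, so \Cref{P:MVW} gives $I(s_{m,n})\mvw\cong\id_{P_n}^{G(W+W^-)}(\chi_V^{-1}\lvert\det\rvert^{s_{m,n}})\cong I(-s_{m,n})^\lor$. Hence $\Theta^\lor\cong\Theta\mvw\hra I(-s_{m,n})^\lor$. Taking contragredients of these admissible representations yields $I(-s_{m,n})\sra\Theta$, with no conjugation step needed.

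Your fallback, by contrast, is a correct proof by a genuinely different route. Write $\Theta_R$ for $\Theta_{V,W+W^-,\psi,R}(\chi_W)$. To make ``reducing the surjection'' precise:
\begin{enumerate}
\item Let $f\colon I_{\ql}(-s_{m,n})\sra\Theta_{\ql}$ be the characteristic-zero surjection, and let $L'$ be the image under $f$ of the natural $\zl$-structure of the induced representation.
\item Since $f$ commutes with the idempotents of pro-$p$ open subgroups, $L'$ is an admissible $G$-stable lattice in $\Theta_{\ql}$.
\item $\Theta_{\ql}$ is a lift of the irreducible $\Theta_{\fl}$ via Trias's structure $\Theta_{\zl}$, so Brauer--Nesbitt gives $L'\bl\cong\Theta_{\fl}$.
\item $L'\bl$ is a quotient of $I(-s_{m,n})$ over $\fl$, since $-\bl$ is right exact and commutes with induction.
\end{enumerate}
This route avoids the character bookkeeping. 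In exchange it imports Hypothesis (H) over $\ql$, which is itself proved by the same MVW argument, and the integral model of the theta lift. The paper's argument stays entirely over $\fl$. Both rest on \Cref{P:H}.
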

As a consequence we obtain a proof of Howe duality for the type I dual pair $(G(W),G(V))$ if $\ell$ is a strongly banal prime. Note that under the assumption that Hypothesis (H) holds, the proof below can already be found in \cite{Trias2025}.
\begin{theorem}
    Let $\ell$ be a strongly banal prime for the pair $(G(W),G(V))$. Then for any $\pi\in\irr_{\fl}(G(W))$ the following holds.
    \begin{enumerate}
        \item $\Theta_{W,V,\psi,\fl}(\pi)$ is of finite length.
        \item The cosocle $\theta_{W,V,\psi,R}(\pi)$ of $\Theta_{W,V,\psi,\fl}(\pi)$ is irreducible or $0$.
        \item If $\pi'\in\irr_{\fl}(G(W))$ such that \[\theta_{W,V,\psi,\fl}(\pi)\cong \theta_{W,V,\psi,\fl}(\pi')\neq 0,\] then $\pi\cong \pi'$.
    \end{enumerate}
\end{theorem}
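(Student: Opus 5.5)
The plan is to follow the argument of \cite{Trias2025}, where all three statements are deduced from Hypothesis (H). By \Cref{C:H}, Hypothesis (H) now holds in the strongly banal setting, so what remains is to check that nothing else in that deduction needs more than strong banality.

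\textbf{Finiteness.} Statement (1) should follow from the compatibility of the Weil representation with Jacquet functors, that is, Kudla's filtration of $r_P(\omega_{\psi,\fl})$. This reduces the finiteness of $\Theta_{W,V,\psi,\fl}(\pi)$ to the cuspidal case. For cuspidal $\pi$, the big theta lift is finitely generated and admissible, and it has a finite cuspidal support in a single block. Since $\ell$ is banal, cuspidals are projective in their blocks and the usual arguments work verbatim. I expect this step to be essentially formal.

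\textbf{Irreducibility of the cosocle.} For (2) and (3) I will use the doubling see-saw of Kudla--Rallis as in \cite{Trias2025}. Set $\WW' = (W+W^-)\otimes V$.
\begin{itemize}
\item Restricting $\omega_{\psi}$ for $\WW'$ to $G(W)\times G(W^-)\times G(V)$ gives a surjection $\Theta_{V,W+W^-,\psi,\fl}(\chi_W)\sra$, from which
\[\ho_{G(W)\times G(W)}\bigl(\Theta_{V,W+W^-,\psi,\fl}(\chi_W),\pi\otimes\pi'^\lor\bigr)\]
controls $\ho_{G(V)}(\Theta(\pi),\Theta(\pi'))$.
\item Hypothesis (H) lets us replace $\Theta_{V,W+W^-,\psi,\fl}(\chi_W)$ by a quotient of $I(-s_{m,n})$.
\item The Geometric Lemma (\Cref{S:GL}) gives a filtration of the restriction of $I(-s_{m,n})$ to $G(W)\times G(W)$ by orbits of $G(W)\times G(W)$ on $P\backslash G(W+W^-)$.
\item The open orbit contributes $\mathrm{C}_c^\infty(G(W))$, which gives $\dim\ho(\ldots,\pi\otimes\pi'^\lor)\le\delta_{\pi,\pi'}$. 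This requires that the smaller orbits do not contribute, and that is where the strongly banal condition on the exponents $\chi_V\lvert\det\rvert^{-s_{m,n}}$ enters.
\end{itemize}

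\textbf{Showing the small orbits do not contribute (expected main obstacle).} Over $\CC$ this is handled by the Gan--Takeda argument, which compares with theta lifts to smaller Witt towers. I plan to reproduce it with cuspidal-support arguments. The banal bound $o(q)>4n$ should guarantee that the relevant characters $\lvert\det\rvert^{s}$ do not coincide modulo $\ell$ with the exponents coming from $\pi$. This mirrors the inequality in condition (5) of banal cuspidal support, i.e.\ $\lvert k+2k'\rvert>2d_{W,m}$, and I would verify it directly from $o(q)>4n$.

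\textbf{Conclusion.} The Hom-space bound gives (3) directly. It also gives (2): if the cosocle of $\Theta(\pi)$ contained $\tau_1\oplus\tau_2$, the see-saw identity would produce a Hom-space of dimension at least $2$, a contradiction.
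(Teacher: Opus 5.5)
\textbf{Verdict: there is a genuine gap.} Your overall framing is the paper's: take Hypothesis (H) from \Cref{C:H} and run the Gan--Takeda argument with the doubling see-saw and the Kudla--Rallis filtration of $I(-s_{m,n})$ on $G(W)\times G(W^-)$. The gap is in the step you call the main obstacle, and both your (2) and (3) depend on it.

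\textbf{Why the lower orbits cannot be removed by banality.} The lower-orbit subquotients of the Kudla--Rallis filtration are induced from representations of the form $\chi_V\lvert\det\rvert^{a_t}\otimes C_c^\infty(G(W_t))$, with one $G_t$-factor on each side. By Frobenius reciprocity and second adjointness they admit non-zero maps to $\pi\otimes\pi'^\lor$ as soon as the Jacquet modules of $\pi$ and $\pi'^\lor$ have matching quotients. This happens, for instance, when $\pi$ is an irreducible quotient of $\chi_V\lvert\det\rvert^{a_t}\rtimes\pi_0$ and $\pi'$ is chosen accordingly. Such $\pi$ exist over $\CC$, and over $\fl$ for every $\ell$. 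The coincidence of exponents here is genuine, not a congruence modulo $\ell$, so no lower bound on $o(q)$ makes these pieces disappear. Consequently the filtration does not give $\dim\ho(I(-s_{m,n}),\pi\otimes\pi'^\lor)\le\delta_{\pi,\pi'}$ for arbitrary $\pi,\pi'$. The Gan--Takeda step over $\CC$ therefore cannot be replaced by a cuspidal-support argument modulo $\ell$; it has to be reproduced.

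\textbf{The missing idea: derivatives.} This is ingredient (5) of the paper's proof. The exponent comparison you have in mind only shows that the lower orbits do not contribute when $\pi$ is $\rho$-reduced for the finitely many characters $\rho=\chi_V\lvert\cdot\rvert^{a}$ that can occur. For general $\pi$ one argues by induction on $\dim W$:
\begin{enumerate}
\item By \Cref{L:dercl}, $\pi$ is the image of the intertwining operator on $({}^\fc\rho^\lor)^{k}\rtimes\dr(\pi)$, where $k=d_\rho(\pi)$ and $\dr(\pi)$ is a $\rho$-reduced representation of a smaller group.
\item Thus $\pi$ is determined by $(d_\rho(\pi),\dr(\pi))$, and these are characterized by parts (5) and (6) of that lemma.
\item Kudla's filtration of the Jacquet modules of $\omega_{\psi,\fl}$ then relates the irreducible quotients of $\Theta(\pi)$ to those of $\Theta(\dr(\pi))$. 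This transfers (2) and (3) from $\dr(\pi)$ to $\pi$.
\end{enumerate}

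\textbf{Where banality is actually used.} Beyond Hypothesis (H), banality is needed for the results of \Cref{S:der} to hold over $\fl$. They use that cuspidals are projective in their blocks and that $\rho^k$ is irreducible. They also need the relevant $\chi_V\lvert\cdot\rvert^{a}$ to stay non-$\fc$-self-dual modulo $\ell$.

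As it stands, your argument proves (2) and (3) at best for representations that are reduced with respect to all these characters.
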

\begin{proof}
    We recall the proof of Howe-duality of \cite{GanTakeda2016}. The proof uses the following ingredients.
    \begin{enumerate}
        \item Kudla's filtration of the Jacquet-module of $\omega_{\psi,\fl}$, see \cite{Kudla1986}.
        \item The See-Saw mechanism of \cite[§6.1]{GanIchino2014}.
        \item Hypothesis (H).
        \item The filtration of \cite{KudlaRallis2005} of the restriction of $I(-{s_{m,n}})$ to $G(W)\times G(W^-)$.
        \item The theory of derivatives.
    \end{enumerate}
We proved (3) in \Cref{C:H}, (5) was discussed in \Cref{S:der}, and (1), (2), and (4), were already treated in \cite{Trias2025}.
Thus one can prove the three claims of the theorem as in \cite{GanTakeda2016}.
\end{proof}
\bibliographystyle{abbrv}
\bibliography{References.bib}
\end{document}